\date{November 6, 2010}
\title[Coherent tangent bundles and Gauss-Bonnet formulas]{%
    Coherent tangent bundles\\
    and Gauss-Bonnet formulas for wave fronts%
}
\author{Kentaro Saji}
\address[Saji]{%
  Department of Mathematics,
  Faculty of Education,
  Gifu University,  Yanagido 1-1, Gifu 501-1193, Japan
}
\email{ksaji@gifu-u.ac.jp}
\author{Masaaki Umehara}
\address[Umehara]{%
   Department of Mathematics, Graduate School of Science,
   Osaka University,
   Toyonaka, Osaka 560-0043,
   Japan
}
\email{umehara@math.sci.osaka-u.ac.jp}
\author{Kotaro Yamada}
\address[Yamada]{%
   Department of Mathematics,
   Tokyo Institute of Technology,
   O-okayama, Meguro, Tokyo 152-8550, Japan%
}
\email{kotaro@math.titech.ac.jp}
\dedicatory{%
  Dedicated to Professor Toshiki Mabuchi on 
  the occasion of his sixtieth birthday
}
\subjclass[2000]{%
 Primary 57R45;   % Singularities of differential maps
 Secondary 53A05. % Surfaces of Euclidean space
}
\keywords{wave front, curvature, the Gauss-Bonnet formula, fold}
\thanks{%
 K. Saji, M. Umehara and K. Yamada were partially 
 supported by Grant-in-Aid for Scientific Research 
 (Young Scientists (B)) No. 20740028,
 (A) No.22244006
 and (B) No. 21340016,
 respectively from the Japan Society for the Promotion of Science.
}
\renewcommand{\keywords}[1]{\ignorespaces\relax\ignorespaces}
\newcommand{\subclass}[1]{\ignorespaces\relax\ignorespaces}
\theoremstyle{plain}
 \newtheorem{theorem}{Theorem}[section]
 \newtheorem{proposition}[theorem]{Proposition}
 \newtheorem{fact}[theorem]{Fact}
 \newtheorem{lemma}[theorem]{Lemma}
 \newtheorem{corollary}[theorem]{Corollary}
\theoremstyle{definition}
 \newtheorem{definition}[theorem]{Definition}
\theoremstyle{remark}
 \newtheorem{remark}[theorem]{Remark}
 \newtheorem*{remark*}{Remark}
 \newtheorem{example}[theorem]{Example}
\numberwithin{equation}{section}
\renewcommand{\theenumi}{{\rm(\arabic{enumi})}}
\renewcommand{\labelenumi}{\theenumi}
\newenvironment{enum}{%
  \begin{enumerate}\setlength{\itemindent}{1em}%
                   \setlength{\leftmargin}{20em}%
}{\end{enumerate}}
\newcommand{\vect}[1]{\boldsymbol{#1}}
\newcommand{\R}{\boldsymbol{R}}
\renewcommand{\phi}{\varphi}
\newcommand{\sgn}{\operatorname{sgn}}
\newcommand{\ext}{\operatorname{ext}}
\newcommand{\inner}[2]{\left\langle{#1},{#2}\right\rangle}
\newcommand{\Ker}{\operatorname{Ker}}
\newcommand{\SO}{\operatorname{SO}}
\newcommand{\E}{\mathcal{E}}
\newcommand{\op}[1]{\operatorname{#1}}
\newcommand{\trans}[1]{\vphantom{#1}^t\!#1}
\newcommand{\first}{\operatorname{\mathit{I}}}
\newcommand{\second}{\operatorname{\mathit{I\!I}}}
\newcommand{\third}{\operatorname{\mathit{I\!I\!I}}}
\begin{document}
\begin{abstract}
 We give a definition of `coherent tangent bundles',
 which is an intrinsic formulation of wave fronts. 
 In our application of coherent tangent bundles
 for wave fronts, the first fundamental forms and 
 the third fundamental forms 
 are considered as induced metrics of 
 certain homomorphisms between vector bundles.
 They satisfy the completely same conditions, 
 and so can reverse roles with  each other.
 For a given wave front of a $2$-manifold, 
 there are two  Gauss-Bonnet formulas.
 By exchanging the roles of the fundamental forms,
 we get two new additional Gauss-Bonnet formulas
 for the third fundamental form. 
 Surprisingly, these are different from those for 
 the first fundamental form, and using these four formulas, 
 we get several new results on the topology 
 and geometry of wave fronts.
\end{abstract}
\maketitle

%%%%%%%%%%%%%%%%%%%%%%%%%%%%%%%%%%%%%%%%%%%%%%%%
\section{Introduction}
In this paper, we give a definition of
{\it coherent tangent bundles}, which is an intrinsic 
formulation of wave fronts. 
The advantage of this formulation is that the first fundamental
forms and the third fundamental forms satisfy 
exactly the same conditions for wave fronts in space forms,
and they can reverse roles with each other.
Then the two Gauss-Bonnet formulas induce two more Gauss-Bonnet
formulas by exchanging the first and third fundamental forms.
These turn out to be different from those for the first fundamental
form, and using these four formulas, we get new results on fronts.

In Section~\ref{sec:coherent}, 
we generalize the definitions of {\em coherent tangent bundles\/} and 
{\it  singular curvature}, which were given for $2$-dimensional
coherent tangent bundles (or fronts)
in \cite{SUY1} and \cite{SUY2}, to fronts of general dimension.
A coherent tangent bundle on an $m$-manifold $M^m$ induces
a positive semi-definite metric on $M^m$
(called the first fundamental form), and
can be regarded as a generalization of Riemannian manifolds.
Using  the concept of coherent tangent bundles,
we can give a unified treatment of
hypersurfaces and $C^\infty$-maps between
the same dimensional manifolds.
The points where the metric degenerates are called
{\it singular points}. 
We define a notion of $A_k$ (singular) points, 
which is a generalization of that 
of fronts and of $C^\infty$-Morin maps at the same time.
We define {\it singular principal curvatures\/}
for each $A_2$-point
of a given coherent tangent bundle, which 
are $(m-1)$-tuples of real numbers.
One of them diverges to $-\infty$ at
$A_3$-points as shown in Theorem \ref{thm:infty}.
This is a generalization of the result in \cite{SUY1}
for $2$-dimensional fronts.
Moreover, 
as shown in Section~\ref{sec:gauss-bonnet},
our intrinsic setting enables us
to introduce the singular  
curvature on $A_2$-Morin singular
points.
When $m=2$, as an application of the two intrinsic Gauss-Bonnet
formulas, we give a new proof of 
Quine's formula (cf.\ Proposition~\ref{prop:quine}), 
and also get a new formula
(cf.\ Proposition~\ref{prop:id})
for the total singular curvature of generic smooth maps between
$2$-manifolds.

Furthermore, in Section~\ref{sec:gauss-bonnet},
we also give  several results on compact $2$-dimensional fronts,
as an application of our new Gauss-Bonnet formulas
for the third fundamental form.
In particular,  we show that the total negative Gaussian 
curvature $\int_{M^2}\min(0,K)\,dA$ of a closed 
immersed surface $M^2$ in $\R^3$ is equal to the 
signed sum of total geodesic curvature 
on each connected component of the singular set of
its Gauss map, see Theorem~\ref{thm:c}.
The deepest applications are given for
surfaces of bounded Gaussian curvature.
For example, the Euler characteristic of
a closed wave front with Gaussian curvature
$-1/\delta<K<-\delta$ ($\delta>0$)
in a $3$-dimensional flat torus 
vanishes (cf.\ Theorem~\ref{thm:g}).

\section{Coherent tangent bundles}
\label{sec:coherent}
Let $M^m$ be an oriented $m$-manifold ($m\ge 1$)
and  $\E$ a vector bundle of rank $m$ over $M^m$.
Before defining coherent tangent bundles,
we define $A_k$-points
for each vector bundle homomorphism 
from the tangent bundle of $M^m $to $\E$.

\subsection{$A_k$-points}
We assume that $\E$ is orientable, namely there exists
a non-vanishing  section $\mu:M^m\to \op{det}(\E^*)$,
where $\det(\E^*)$ is the determinant line bundle of 
the dual bundle $\E^*$.
This assumption does not create any restriction for
the local theory. In fact, for each point $p\in M^m$,
there exists  a section $\mu$ of $\det(\E^*)\setminus\{0\}$
defined on  a neighborhood $U$ of $p$. 
Now we fix a bundle homomorphism
\[
   \phi:TM^m\longrightarrow \E,
\]
where $TM^m$ is the tangent bundle of $M^m$.
A point $p\in M^m$ is called a {\it $\phi$-singular point\/} 
if $\phi_p\colon{}T_pM^m\to\E_p$ is not a bijection, where $\E_p$ is the
fiber of $\E$ at $p$.
We denote by $\Sigma_{\phi}$ the set of $\phi$-singular points on $M^m$.
On the other hand, a point $p\in M^m\setminus\Sigma_{\phi}$
is called a {\it $\phi$-regular point}.
We now fix a $\phi$-singular point $p$, and take a positively oriented
local coordinate neighborhood $(U;u_1,\dots,u_m)$ of $p$.
THen it holds that
\[
     \varphi^*\mu=\lambda_\varphi\,du_1\wedge\dots\wedge du_m,
\]
where 
\begin{equation}\label{eq:jacobian}
  \lambda_\varphi
       =\mu(\varphi_1,\ldots,\varphi_n):U\to\R%
       ,\qquad
      \left(
         \varphi_j=
	     \phi\left(\frac{\partial}{\partial u_j}\right)%
	     ,\ j=1,\dots,m
      \right)
\end{equation}
is a $C^\infty$-function.
We call the function $\lambda_\varphi$ the
{\em $\varphi$-Jacobian function}.
The set of $\phi$-singular points on $U$ is expressed as
\begin{equation}\label{eq:singular0}
   \Sigma_{\phi}\cap U:=\{p\in U\,;\,\lambda_{\phi}(p)=0  \}. 
\end{equation}
We now set
\begin{equation}\label{eq:M_pm}
   M^+_\phi:=\{q\in M^m\,;\, \lambda_\phi(q)>0\},\qquad
   M^-_\phi:=\{q\in M^m\,;\, \lambda_\phi(q)<0\}.
\end{equation}
Since the sign of $\lambda_\phi$ does not depend on
the choice of the positively oriented local coordinate system,
the above definitions of $M^\pm_\phi$ make sense.

A $\phi$-singular point $p$ $(\in\Sigma_{\phi})$ is called 
{\em non-degenerate\/} if $d\lambda_\phi$ does not vanish at $p$.
On a neighborhood of a non-degenerate $\phi$-singular point,
the $\phi$-singular set consists of an $(m-1)$-submanifold in $M^m$, 
called the {\em $\phi$-singular submanifold}.
If $p$ is a non-degenerate $\phi$-singular point, the rank of 
$\phi_p$ is $m-1$.
The direction of the kernel of $\phi_p$ is 
called the {\em  null direction}.
Let $\eta$ be {\it a null vector field}, 
namely, smooth (non-vanishing) 
vector field along the
$\phi$-singular submanifold $\Sigma_\phi$, 
which gives the null direction
at each point in $\Sigma_\phi$.

\begin{definition}[$A_2$-points]%
\label{def:a2-point}
 A non-degenerate $\phi$-singular point $p\in M^m$
 is called an 
 {\em $A_2$-point\/}, or
 an {\em $A_2$-point of $\phi$\/}, 
 if the null direction $\eta(p)$ is transversal to the 
 $\varphi$-singular
 submanifold.
\end{definition}

We set
\begin{equation}\label{eq:lambda-prime}
       \lambda_\phi':=d\lambda(\tilde\eta),
\end{equation}
where $\tilde\eta$ is a vector field on a neighborhood $U$ of $p$
which gives a null vector field  on $\Sigma_\phi\cap U$.
Then $p$ is an $A_2$-point if and only if the function
$\lambda'_\phi$ does not vanish at $p$ (see \cite[Theorem 2.4]{SUY4}).
\begin{definition}[$A_3$-points]
\label{def:a3-point}
 A non-degenerate $\varphi$-singular point $p\in M^m$ 
 is called an {\em $A_3$-point} or an {\em $A_3$-point of $\varphi$}
 if it is not an $A_2$-point of $\varphi$, 
 but $d\lambda_\varphi'(\eta)$ does not
vanish.
\end{definition}

Similarly, we can define $A_{k+1}$-points inductively.
We set
\begin{equation}\label{eq:lambda-k}
 \lambda^{(k)}_\varphi
  :=
  d\lambda^{(k-1)}_\varphi(\tilde{\eta})
  :
  \Sigma_{\varphi}\to{\R}\qquad
    (\lambda_\varphi^{(1)}:=\lambda_\varphi',\ 
     \lambda_\varphi^{(0)}:=\lambda_\varphi).
\end{equation}
If $(d\lambda^{(k-1)}_\varphi)_p\ne0$
on $T_p\Sigma^{k-1}$,
then we can define a subset of
the submanifold $\Sigma_\varphi^k$ by
\[
   \Sigma^{k+1}_\varphi
      :=
      \{q\in\Sigma_\varphi^k;\lambda^{(k)}_\varphi(q)=0\}
      =
      \{q\in\Sigma_\varphi^k;\eta(q)\in T_q\Sigma_\varphi^k\},
\]
where we set $\Sigma_\varphi^1:=\Sigma_\varphi$
and $\Sigma_\varphi^0:=M^m$.
If a non-degenerate $\varphi$-singular point
$p$ is not an $A_k$-point of $\varphi$,
but $d\lambda^{(k-1)}_\varphi(\tilde\eta)$
is well-defined inductively and is nonvanishing
at $p$,
then it is called an {\em $A_{k+1}$-point of} $\varphi$.
Here an $A_1$-point means a $\varphi$-regular point.
(See [17, Section 2] for details.)

\subsection{Coherent tangent bundles}
As a generalization of the $2$-dimensional case in \cite{SUY1} and
\cite{SUY2}, 
we give a general setting for intrinsic fronts:
Let $M^m$ be an oriented $m$-manifold ($m\ge 1$).
A {\em coherent tangent bundle\/} over $M^m$ 
is a $5$-tuple $(M^m,\E,\inner{~}{~},D,\varphi)$, 
where 
\begin{enum}
 \item $\E$ is a vector bundle of rank $m$ over $M^m$ with
       an inner product $\inner{~}{~}$,
 \item $D$ is a metric connection on $(\E,\inner{~}{~})$,
 \item $\varphi\colon{}TM^m\to \E$ is a bundle homomorphism
       which satisfies
       \begin{equation}\label{eq:c}
             D^{}_{X}\phi(Y)-D^{}_{Y}\phi(X)-\phi([X,Y])=0
       \end{equation}
       for vector fields $X$ and $Y$ on $M^m$.
\end{enum}
In this setting, the pull-back of the metric 
\begin{equation}\label{eq:phi-metric}
   ds^2_\phi:=\phi^*\inner{~}{~}
\end{equation}
is called the {\em $\phi$-metric}, 
which is a positive semidefinite symmetric tensor on $M^m$.

It should be remarked that as in \eqref{eq:c},
\begin{equation}\label{eq:torsion}
      T_\phi(X,Y):=D_X\phi(Y)-D_Y\phi(X)-\phi([X,Y])
\end{equation}
gives a skew-symmetric tensor on $M^m$, called
the {\it torsion tensor\/} of $D$ with respect to $\phi$.
If $(M^m,\E,\inner{~}{~},D,\phi)$ is a
coherent tangent bundle, then the pull back of the connection $D$
by $\phi$ coincides with the Levi-Civita connection.
In this sense, a coherent tangent bundle can be considered
as a generalization of Riemannian manifold.
In fact, as an application, a duality of
conformally flat manifolds
with admissible singularities is 
given in terms of
coherent tangent bundles in \cite{LUY}.

A coherent tangent bundle $(M^m,\E,\inner{~}{~},D,\phi)$
is called {\it co-orientable\/} if the vector bundle $\E$ is orientable,
namely, there exists a globally defined smooth  
section $\mu$ of $\det(\E^*)$ such that 
\begin{equation}\label{eq:mu}
   \mu(\vect{e}_1,\dots,\vect{e}_m)=\pm 1
\end{equation} 
for any orthonormal frame
$\{\vect{e}_1,\dots,\vect{e}_m\}$ on $\E$.
The form $\mu$ is determined uniquely up to a $\pm$-ambiguity.
A {\em co-orientation\/} of the coherent tangent bundle $\E$ is a 
choice of $\mu$.
An orthonormal frame $\{\vect{e}_1,\dots,\vect{e}_m\}$ is called 
{\em positive\/} with respect to the co-orientation $\mu$ 
if $\mu(\vect{e}_1,\dots,\vect{e}_m)=+1$.

We give here typical examples of coherent tangent bundles:
\begin{example}\label{ex:map}
 Let  $M^m$ be an oriented $m$-manifold
 and  $(N^m,g)$ an oriented Riemannian $m$-manifold.
 A $C^\infty$-map $f:M^m\to N^m$ induces a coherent tangent bundle
 over $M^m$  as follows: 
 Let $\E_f:=f^*TN^m$ be the pull-back of the tangent bundle $TN^m$
 by $f$.
 Then $g$ induces a positive definite metric $\inner{~}{~}$ on $\E_f$,
 and the restriction $D$ of the Levi-Civita connection of $g$
 gives a connection on $\E$ which is compatible with respect to
 the metric $\inner{~}{~}$.
 We set 
 $\phi^{}_f:=df:TM^m\to\E_f$,
 which gives the structure of the coherent tangent bundle on $M^m$.
 A necessary and sufficient condition for  a given coherent tangent
 bundle over an $m$-manifold  to be realized as
 a smooth map into the $m$-dimensional space form
 is given in \cite{SUY6}.
\end{example}

\begin{example}\label{ex:front}
 Let $(N^{m+1},g)$ be an $(m+1)$-dimensional Riemannian manifold.
 A $C^\infty$-map
 $f:M^m\to N^{m+1}$
 is called a {\it frontal\/} if for each $p\in M^m$,
 there exists a neighborhood $U$ of $p$ and a unit vector field $\nu$
 along $f$ defined on $U$ such that 
 $g\bigl(df(X),\nu\bigr)=0$ holds for any vector field $X$ on $U$
 (that is, $\nu$ is a unit normal vector field),
 and the map $\nu\colon{}U\to T_1N^{m+1}$ is a $C^{\infty}$-map,
 where $T_1N^{m+1}$ is the unit tangent bundle of $N^{m+1}$.
 Moreover, if $\nu$ can be taken to be an immersion 
 as a smooth section of $T_1N^{m+1}$ 
 for each $p\in M^m$,
 $f$ is called a {\em front} or a {\em wave front}.
 We remark that $f$ is a front if and only if
 $f$ has a lift
 $L_f:M^m\to P\bigl(T^*N^{m+1}\bigr)$
 as a Legendrian immersion,
 where $P(T^*N^{m+1})$ is a projectified cotangent bundle on $N^{m+1}$
 with the canonical contact structure.
 The subbundle $\E_f$ which consists of the vectors in
 the pull-back bundle $f^*TN^{m+1}$  
 perpendicular to $\nu$ gives a coherent tangent bundle.
 In fact, 
 $\phi_f\colon{}TM^m\ni X \mapsto df(X)\in \E_f$
 gives a bundle homomorphism.
 Let $\nabla$ be the Levi-Civita connection on $N^{m+1}$.
 Then by taking the tangential part of $\nabla$, it induces 
 a connection $D$ on $\E_f$ satisfying 
 \eqref{eq:c}.
 Let $\inner{~}{~}$ be a metric on $\E_f$ induced from 
 the Riemannian metric on $N^{m+1}$, then  $D$ is a metric connection
 on $\E_f$. 
 Thus we get a coherent tangent bundle 
 $(M^{m},\E_f,\inner{~}{~},D,\phi_f)$.
 Since the unit tangent bundle can be canonically identified with
 unit cotangent bundle, 
 the map $\nu\colon{}U\to T_1N^{m+1}$ can be identified with
 $L_f|_U$.
 A frontal $f$ is called {\it co-orientable\/} if there is a
 unit normal vector field $\nu$ globally defined on $M^m$.
 When $N^{m+1}$ is orientable, the coherent tangent bundle is 
 co-orientable if and only if so is $f$.
\end{example}

From now on, we assume that 
$(M^m,\E,\inner{~}{~},D,\phi)$ is a co-orientable 
coherent tangent bundle, 
and fix a co-orientation $\mu$ on the coherent tangent bundle.
(If $\E$ is not co-orientable, one can take a double cover 
 $\pi\colon{}\widehat M^m\to M^m$ such that the pull-back of $\E$ by $\pi$ is
 a co-orientable coherent tangent bundle over $\widehat M^m$.)
\begin{definition}\label{def:volume}
 The {\em signed $\phi$-volume form\/} $d\hat A_{\phi}$ and the 
 ({\em unsigned}) {\em $\phi$-volume form \/} $dA_{\phi}$  
 are defined  as
 \begin{equation}\label{eq:volume-form}
 \begin{aligned}
    d\hat A_\phi := \phi^*\mu = 
      \lambda_{\phi}\,du_1\wedge \dots \wedge du_m,\quad
    dA_\phi 
     := |\lambda_{\phi}|\,du_1 \wedge \dots \wedge du_m,
 \end{aligned}
 \end{equation}
 where $(U;u_1,\dots,u_m)$ is a local coordinate system of $M^m$
 compatible with the orientation of $M^m$, and 
 $\lambda_{\phi}$ 
 is the $\phi$-Jacobian function on $U$ given in \eqref{eq:jacobian}.
 Both $d\hat A_{\phi}$  and $dA_{\phi}$ are independent of the  choice of
 a positively  oriented local coordinate system $(U;u_1,\dots,u_m)$, 
 and give two globally defined $m$-forms on $M^m$.
 ($d\hat A_{\phi}$ is $C^\infty$-differentiable, 
 but $dA_{\phi}$ is only
 continuous.)
 When $M^m$ has no $\phi$-singular points, the two forms 
 coincide up to sign. Then the two sets $M^\pm_{\phi}$
 as in \eqref{eq:M_pm} are written as 
 \begin{align*}
   M^+_{\phi}&:=\bigl\{p\in M^m\setminus \Sigma_{\phi} \,;\, 
           d\hat A_{\phi}(p)=dA_{\phi}(p)\bigr\}, \\
   M^-_{\phi}&:=\bigl\{p\in M^m\setminus \Sigma_{\phi}\,;\, 
           d\hat A_\phi(p)=-dA_{\phi}(p)
 \bigr\}.
 \end{align*}
 The $\phi$-singular set $\Sigma_{\phi}$ coincides with
 the boundary  $\partial M^+_\phi=\partial M^-_\phi$.
\end{definition}

\subsection{Singularities of $2$-dimensional coherent tangent bundle}~
 
When $m=2$ and $(M^2,\E,\inner{~}{~},D,\phi)$ comes from a
front in $3$-manifold as in Example~\ref{ex:front}
(resp.\ a map into $2$-manifold as in Example~\ref{ex:map}), 
an $A_2$-point corresponds to a cuspidal edge (resp.\ a fold)
(cf.\ \cite{SUY3}, see Fig.~\ref{fig:folds}).
Though cuspidal cross caps in surfaces in $\R^3$ are not
singular points of a front, they are also $A_2$-points 
(see \cite[Remark 1.6]{SUY2}).
In this way, our definition of $A_2$-points are wider than 
the original definition of $A_2$-points on fronts
or Morin maps.
However, if $(M^2,\E,\inner{~}{~},D,\phi)$ 
is associated to a wave front, 
$A_2$-points
really corresponds to cuspidal edges on its realization as
fronts.

%%%%%%%%%%%%%%%%%%%%%%%%%%%%%%%%%%%%%%%%%%%
\begin{figure}[htb]
 \begin{center}
        \includegraphics[height=2.5cm]{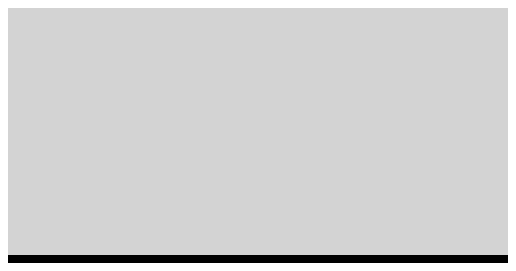}\hspace{1cm}
        \includegraphics[height=2.5cm]{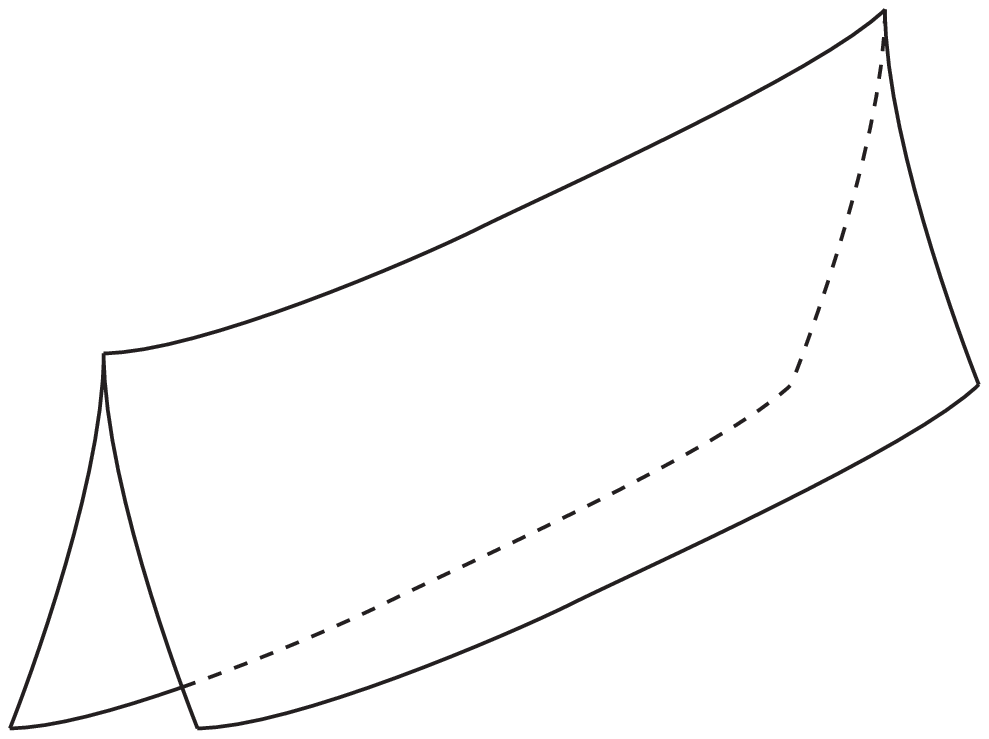}
 \end{center}
  \caption{%
   A fold in a plane (left) and  a cuspidal edge in a space (right).}
  \label{fig:folds}
\end{figure}
%%%%%%%%%%%%%%%%%%%%%%%%%%%%%%%%%%%%%%%%%%%
On the other hand,
an $A_3$-point corresponds to a swallowtail (resp.\ a cusp)
when $m=2$ and $(M^2,\E,\inner{~}{~},D,\phi)$ 
comes from a front in a $3$-manifold 
(resp.\ a map into a $2$-manifold). 
This fact was shown in 
\cite{KRSUY} (see  Fig.~\ref{fig:cusps}). 
%%%%%%%%%%%%%%%%%%%%%%%%%%%%%%%%%%%%%%% %%%%
\begin{figure}[htb]
 \begin{center}
        \includegraphics[height=2.5cm]{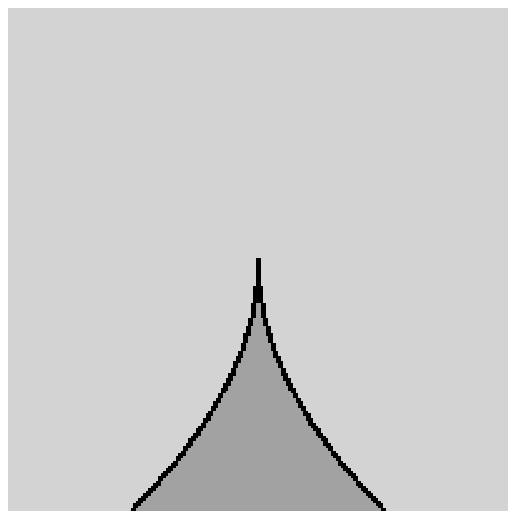}\hspace{1cm}
        \includegraphics[height=2.5cm]{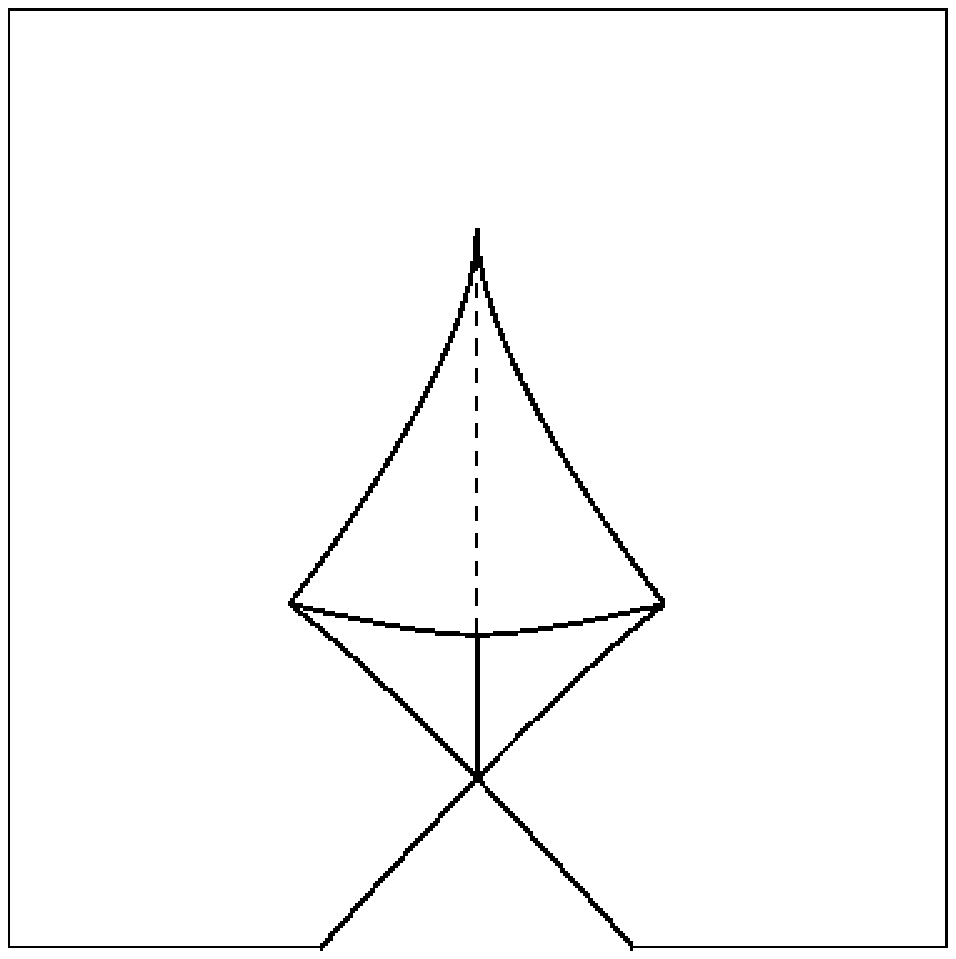}
 \end{center}
  \caption{%
    A cusp in a plane (left) and a swallowtail in a space (right).}
  \label{fig:cusps}
\end{figure}
%%%%%%%%%%%%%%%%%%%%%%%%%%%%%%%%%%%%%%%%%%%

When $\phi$ is induced from a smooth map between 
$2$-manifolds (resp.\ a front in a $3$-manifold),
it is well-known that $A_2$-points and $A_3$-points are
generic singular points of $\phi$ (cf.\ \cite{AGV}).
However, if one consider deformations of
$A_2$ or $A_3$-points,
an additional three types of singular points of type $A_3^{+}$
(lips),
$A_3^{-}$ (beaks) and of $A_4$ (butterfly) also
generically appear. 
So we mention here an intrinsic characterization of these three singular
points.
Let $(M^2,\E,\inner{~}{~},D,\phi)$ be a coherent tangent bundle
over a $2$-manifold $M^2$.
\begin{definition}[Lips]\label{def:lips}
 A $\phi$-singular point $p\in M^2$
 is called a {\em lips of $\phi$\/}
 if it satisfies the following conditions:
 \begin{enum}
  \item The rank of $\phi$ at $p$ is $1$,
  \item the exterior derivative $d\lambda_\phi$ vanishes at $p$,
  \item the Hessian matrix  of the function $\lambda_\phi$ at $p$
        is positive definite.
 \end{enum}
\end{definition}

\begin{definition}[Beaks]\label{def:beaks}
 A $\phi$-singular point $p\in M^2$
 is called a {\em beaks of $\phi$\/}
 if it satisfies the following conditions:
 \begin{enum}
  \item The rank of $\phi$ at $p$ is $1$,
  \item the exterior derivative $d\lambda_\phi$ vanishes at $p$,
  \item the Hessian matrix  of the function $\lambda_\phi$ at $p$
        is negative definite,
  \item the second derivative $\lambda''_\phi(=d\lambda'_\phi(\eta))$ 
        does not vanish 
        at $p$, where the prime means the derivative with respect to
        the null direction 
        (cf. \eqref{eq:lambda-prime} and \eqref{eq:lambda-k}).
 \end{enum}
\end{definition}

\begin{definition}[Butterfly]\label{def:b-fly}
 A non-degenerate $\phi$-singular point $p\in M^2$
 is called a {\em  butterfly of $\phi$\/}
 if it satisfies 
 \[
     \lambda'_\phi(p)=\lambda''_\phi(p)=0,\quad
     \lambda'''_\phi(p)\ne 0,
 \]
where the prime means the derivative with respect to
the null direction.
\end{definition}

\begin{remark}\label{fact:saji}
 When a coherent tangent bundle is induced from
 a front, lips, beaks and butterfly 
 correspond to cuspidal lips, cuspidal beaks and cuspidal butterfly,
 respectively 
 (Fig.~\ref{fig:front-peaks}). See  \cite{IST} and \cite{IS}.
 On the other hand, if a coherent tangent bundle is induced from
 a smooth map between $2$-manifolds, 
 these singular points are corresponding to those on the map
 (Fig.~\ref{fig:map-peaks}). See  \cite{S}.

%%%%%%%%%%%%%%%%%%%%%%%%%%%%%%%%%%%%%%%%%%%
\begin{figure}[htb]
 \begin{center}
        \includegraphics[height=3.5cm]{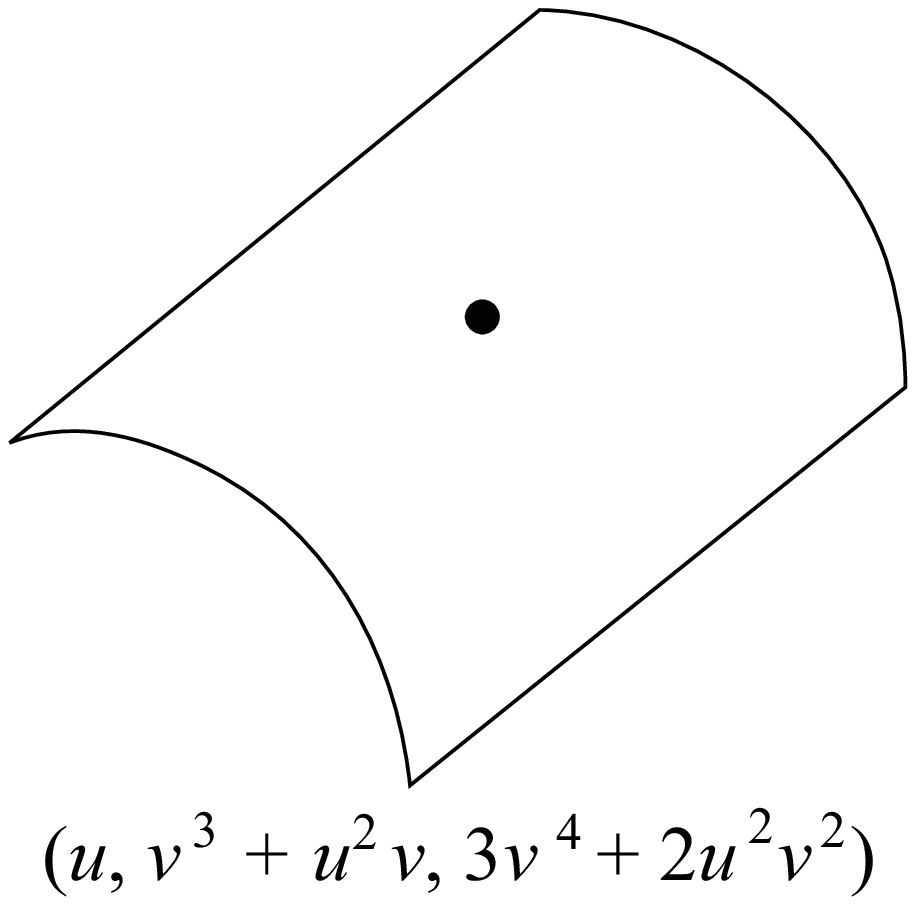}\hspace{5mm}
        \includegraphics[height=3.5cm]{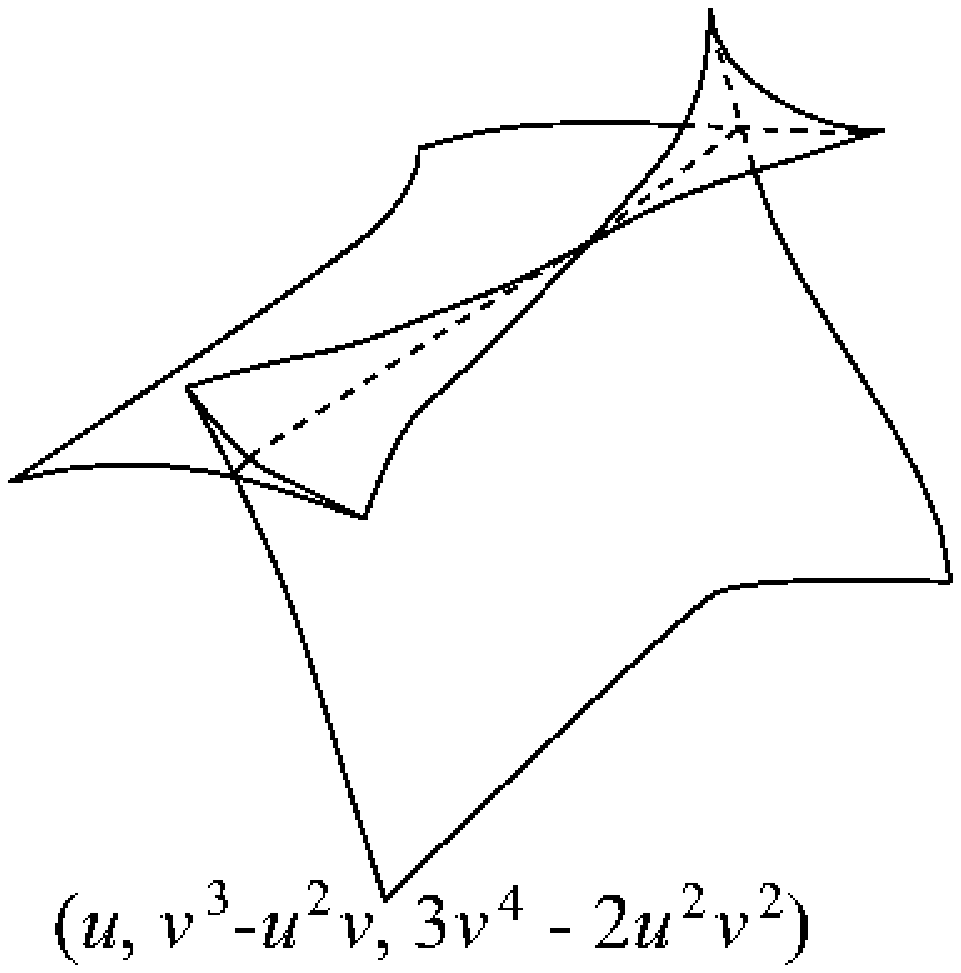}\hspace{5mm}
        \includegraphics[height=3.5cm]{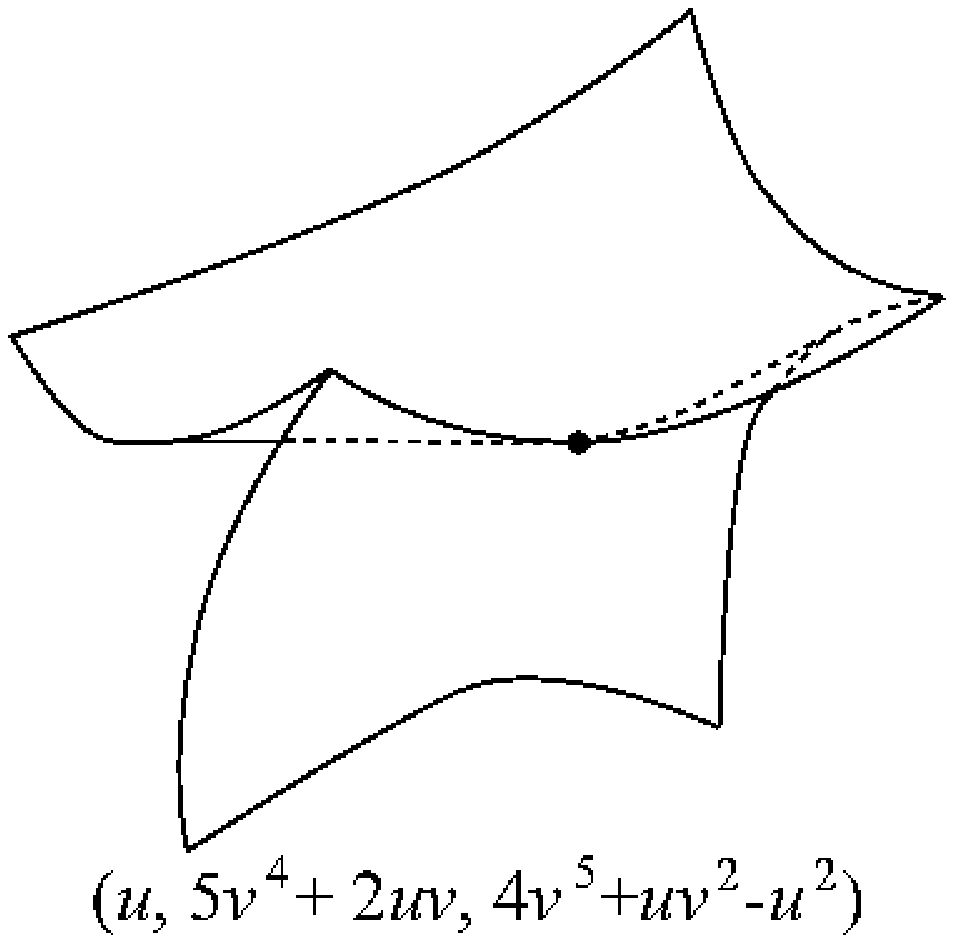}
  \caption{%
    A cuspidal lips (left), 
    a cuspidal beaks (center) and 
    a cuspidal butterfly (right) in $\R^3$.}
  \label{fig:front-peaks}
 \end{center}
%\end{figure}
%\begin{figure}[htb]
 \begin{center}
        \includegraphics[height=3.2cm]{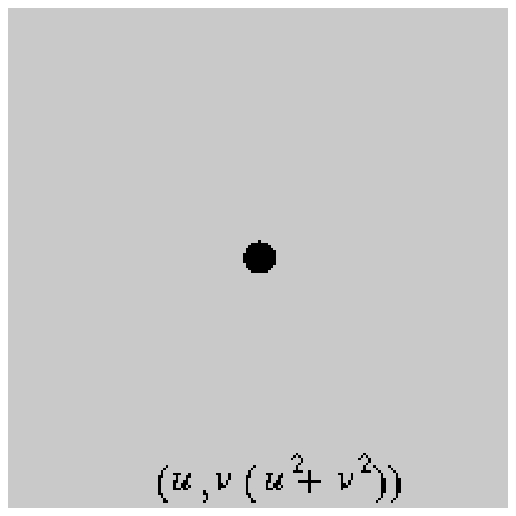}\hspace{1cm}
        \includegraphics[height=3.2cm]{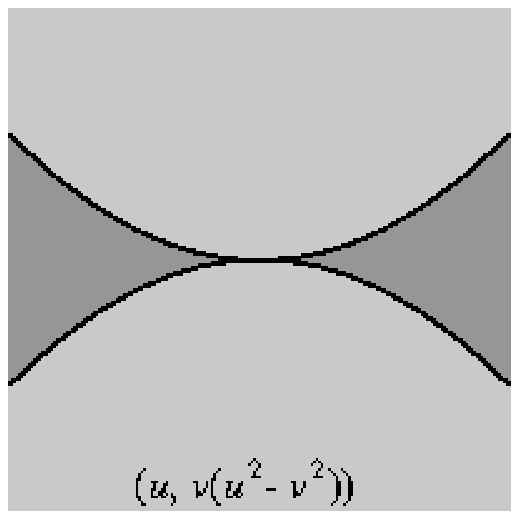}\hspace{1cm}
        \includegraphics[height=3.2cm]{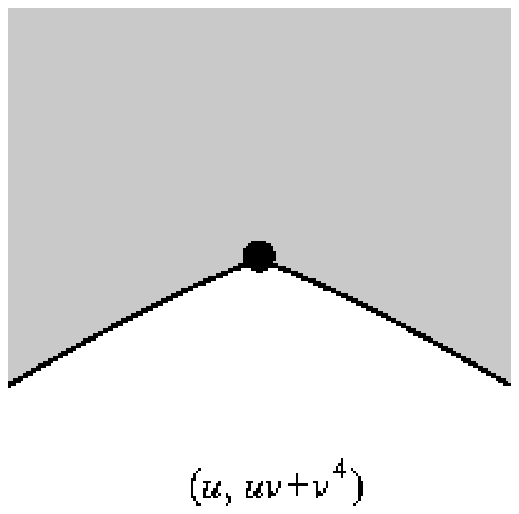}
  \caption{%
     A lips (left), a beaks (center) and a butterfly (right) in $\R^2$.}
  \label{fig:map-peaks}
\end{center}
\end{figure}
%%%%%%%%%%%%%%%%%%%%%%%%%%%%%%%%%%%%%%%%%%%%%%%%%%%
 In this way, our intrinsic formulation can give a unified treatment of
 singular points  on maps and on fronts at the same time.
\end{remark}

\subsection{Singular curvatures}
Let $(M^m,\E,\inner{~}{~},D,\phi)$ be a coherent tangent bundle
and fix  a $\phi$-singular point $p\in\Sigma_{\phi}$ which is an
$A_2$-point.
Then there exists a neighborhood $U$ of $p$ such that 
$\Sigma_{\phi}\cap U$ consists of $A_2$-points.
Now we define the {\it singular shape operator\/} as follows:
Since the kernel of $\phi_p$ is transversal to $\Sigma_\phi$ at
$p$, $\phi|_{T(\Sigma_{\phi}\cap U)}$ is injective,
where $U$ is a sufficiently small neighborhood of $p$.
Then the metric $ds^2_\phi$ is positive definite on $\Sigma_\phi\cap U$.
We take an orthonormal frame field
$e_1$, $e_2$,\dots, $e_{m-1}$
on $\Sigma_{\phi}\cap U$ with respect to $ds^2_\phi$.
Without loss of generality, we may assume that 
$(e_1,e_2,\dots,e_{m-1})$ is
smoothly extended on $U$ as an orthonormal $(m-1)$-frame field.
Then we can take a unique smooth section
$\vect{n}:U\to \E$ (called the {\it conormal vector field})
so that
$(\phi(e_1),\dots,\phi(e_{m-1}), \vect{n})$
gives a positively oriented orthonormal frame field on $\E$.
Now, we set
\begin{equation}\label{eq:sing-shape}
   S_\phi(X):=
     -\sgn\left(d\lambda_\phi\bigl(\eta(q)\bigr)\right) 
         \phi^{-1}(D_X\vect n)
     \quad (X\in T_q\Sigma_\phi,\,\,q\in \Sigma_\phi\cap U),
\end{equation}
where the non-vanishing null vector field $\eta$ is chosen so that
$(e_1,\dots,e_{m-1},\eta)$ is compatible with respect to
the orientation of $M^m$.
It holds that 
\begin{equation}\label{eq:sign2}
   \sgn\bigl(d\lambda_\phi(\eta(q))\bigr)
   =
   \begin{cases}
     \hphantom{-} 1 & 
        \mbox{if $\eta(q)$ points toward $M^+_{\phi}$},\\
     -1 & 
        \mbox{if $\eta(q)$ points toward $M^-_{\phi}$}.
   \end{cases}
\end{equation}
Since $\phi$ is injective on each tangent space of
$\Sigma_\phi$ and $D_X\vect{n}\in \phi(T\Sigma_\phi)$,
the inverse element $\phi^{-1}(D_X\vect{n})$ is uniquely determined. 
Thus we get a bundle endomorphism
$S_\phi:T\Sigma_\phi \to T\Sigma_\phi$
which is called the {\it singular shape operator\/} on $\Sigma_\phi$.

\begin{proposition}[A generalization of Theorem 1.6 in \cite{SUY1}]
\label{prop:sym}
 The definition of the singular shape operator $S_\phi$ is independent
 of the choice of an  orthonormal frame field 
 $e_1,\dots,e_{m-1}$,
 the choice of an  orientation of $M^m$,
 and the choice of a co-orientation of $\E$.
 Moreover, it holds that
 \[
   ds^2_\phi\bigl(S_\phi(X),Y\bigr)=ds^2_\phi\bigl(X,S_\phi(Y)\bigr)
        \qquad (X,Y\in T_q\Sigma_\phi,~q\in \Sigma_\phi).
 \]
\end{proposition}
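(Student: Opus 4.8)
The plan is to reduce the statement to a local computation in an adapted frame, exploiting the metric compatibility of $D$ and the integrability condition \eqref{eq:c}. First I would verify the claimed independences. The sign factor $\sgn(d\lambda_\phi(\eta))$ was introduced precisely to cancel the ambiguities: replacing the co-orientation $\mu$ by $-\mu$ reverses the sign of $\lambda_\phi$, hence of $d\lambda_\phi(\eta)$, but also flips $\vect n$ (since $(\phi(e_1),\dots,\phi(e_{m-1}),\vect n)$ must stay positively oriented), so $D_X\vect n$ changes sign and the product $-\sgn(d\lambda_\phi(\eta))\,\phi^{-1}(D_X\vect n)$ is unchanged. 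Similarly, reversing the orientation of $M^m$ forces (by the compatibility requirement on $(e_1,\dots,e_{m-1},\eta)$) a sign change in $\eta$, hence in $d\lambda_\phi(\eta)$, and simultaneously we may absorb this by reversing one $e_j$, which flips $\vect n$; tracking both flips shows $S_\phi$ is invariant. For independence of the choice of orthonormal frame $e_1,\dots,e_{m-1}$ on $\Sigma_\phi\cap U$: if $(\tilde e_j)=(e_i)A$ for a smooth $\SO(m-1)$-valued map $A$ (the $\O(m-1)$ case reduces to this together with the orientation/co-orientation arguments just given), then $\vect n$ is unchanged because $\det A=1$ keeps the full frame positively oriented; the operator $S_\phi$ is defined frame-freely as $X\mapsto -\sgn(\cdots)\phi^{-1}(D_X\vect n)$ once $\vect n$ is fixed, so there is nothing further to check.

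The substantive part is the symmetry $ds^2_\phi(S_\phi(X),Y)=ds^2_\phi(X,S_\phi(Y))$. Fix $q\in\Sigma_\phi\cap U$ and work with the extended orthonormal $(m-1)$-frame $e_1,\dots,e_{m-1}$ and conormal $\vect n$, so that $(\phi(e_1),\dots,\phi(e_{m-1}),\vect n)$ is a positively oriented orthonormal frame of $\E$ on $U$. It suffices to prove $ds^2_\phi(S_\phi(e_i),e_j)$ is symmetric in $i,j$ at points of $\Sigma_\phi$. Write $h_{ij}:=\inner{D_{e_i}\vect n}{\phi(e_j)}$ (defined on all of $U$, for $1\le i,j\le m-1$); then, since $\phi$ is an isometry from $T\Sigma_\phi$ onto its image and $D_{e_i}\vect n\in\phi(T\Sigma_\phi)$ along $\Sigma_\phi$, we have $ds^2_\phi(S_\phi(e_i),e_j)=-\sgn(d\lambda_\phi(\eta))\,h_{ij}$ on $\Sigma_\phi\cap U$, so the claim is exactly the symmetry $h_{ij}=h_{ji}$ on $\Sigma_\phi$. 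Differentiating $\inner{\vect n}{\phi(e_j)}=0$ and using that $D$ is a metric connection gives $h_{ij}=-\inner{\vect n}{D_{e_i}\phi(e_j)}$, and likewise $h_{ji}=-\inner{\vect n}{D_{e_j}\phi(e_i)}$. Hence
\[
  h_{ij}-h_{ji}
  =-\inner{\vect n}{D_{e_i}\phi(e_j)-D_{e_j}\phi(e_i)}
  =-\inner{\vect n}{\phi([e_i,e_j])},
\]
where the last equality is the integrability condition \eqref{eq:c}. Now here is the key point: $e_1,\dots,e_{m-1}$ are (the extension of) a frame tangent to the submanifold $\Sigma_\phi$ along $\Sigma_\phi$, so along $\Sigma_\phi$ the Lie bracket $[e_i,e_j]$ is again tangent to $\Sigma_\phi$, i.e. $[e_i,e_j]\in T\Sigma_\phi=\operatorname{span}(e_1,\dots,e_{m-1})$ at points of $\Sigma_\phi$. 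Therefore $\phi([e_i,e_j])$ lies in $\phi(T\Sigma_\phi)=\operatorname{span}(\phi(e_1),\dots,\phi(e_{m-1}))$, which is exactly the orthogonal complement of $\vect n$; so $\inner{\vect n}{\phi([e_i,e_j])}=0$ on $\Sigma_\phi$, giving $h_{ij}=h_{ji}$ there, as required.

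I expect the main (though modest) obstacle to be the bookkeeping of signs in the three independence claims — getting the interplay between the orientation of $M^m$ (which fixes $\eta$ up to the frame), the co-orientation of $\E$ (which fixes $\vect n$), and the permitted $\O(m-1)$ ambiguity in $(e_1,\dots,e_{m-1})$ exactly right, so that every sign flip is compensated. The symmetry itself is then a short computation, the crux being the observation that $[e_i,e_j]$ is tangent to $\Sigma_\phi$ along $\Sigma_\phi$ because the $e_i$ are tangent there, which kills the torsion term via \eqref{eq:c}. One should also remark that the value $h_{ij}$ off $\Sigma_\phi$ need not be symmetric — the argument genuinely uses the tangency of the frame to the singular submanifold — but this causes no difficulty since $S_\phi$ is only defined on $T\Sigma_\phi$.
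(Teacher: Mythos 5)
Your argument for the symmetry is exactly the paper's: metric compatibility gives $\inner{D_X\vect{n}}{\phi(Y)}=-\inner{\vect{n}}{D_X\phi(Y)}$, then \eqref{eq:c} trades $D_X\phi(Y)$ for $D_Y\phi(X)+\phi([X,Y])$, and the bracket term vanishes because $X,Y$ are tangent to $\Sigma_\phi$, so $\phi([X,Y])\in\phi(T\Sigma_\phi)\perp\vect{n}$; the independence claims are likewise handled by the same two-flips-cancel counting as in the paper. One small correction in that bookkeeping: when the orientation of $M^m$ is reversed (frame kept fixed), the Jacobian $\lambda_\phi$ itself also changes sign, so $d\lambda_\phi(\eta)$ is in fact unchanged and no compensating flip of an $e_j$ is needed --- this is the paper's observation that $\lambda_\phi$ and $\eta$ change sign simultaneously; your net conclusion is still correct, but the intermediate claim that $d\lambda_\phi(\eta)$ flips ``because $\eta$ flips'' overlooks the sign change of $\lambda_\phi$.
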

\begin{proof}
 In fact, if one reverses the orientation of the basis
 $(e_1,\dots,e_{m-1})$,
 then
 $\vect{n}$ and $\eta$ change sign at the same time,
 and thus $S_{\phi}$ is unchanged.
 Similarly, 
 if we reverse the orientation of $M^m$ (resp.\ $\E$),
 then
 $\lambda_\phi$ and $\eta$ (resp.\ $\lambda_\phi$ and $\vect{n}$)
 change sign at the same time,
 and $S_\phi$ unchanged.
 
 We can get the final assertion from the
 following identity
 \begin{align*}
  ds^2_\phi&(\phi^{-1}(D_X\vect{n}),Y)=
   \inner{D_X\vect{n}}{\phi(Y)}
    =-\inner{\vect{n}}{D_X\phi(Y)}\\
   &=-\inner{\vect{n}}{D_Y\phi(X)+\phi\bigl([X,Y]\bigr)}\\
   &=-\inner{\vect{n}}{D_Y\phi(X)}
    = \inner{D_Y\vect{n}}{\phi(X)}
    =ds^2_\phi\bigl(\phi^{-1}(D_Y\vect{n}),X\bigr),
 \end{align*}
 where $X$ and $Y$ are both vector fields on $\Sigma_\phi$.
\end{proof}
Hence $S_\phi$ is  symmetric with respect to $ds^2_{\varphi}$.
\begin{definition}\label{def:singular-curvature}
 Let $p\in\Sigma_\phi$ be an $A_2$-point of $\phi$.
 Then
 \begin{equation}\label{eq:kappa}
  \kappa_\phi(X):=ds^2_\phi(S_\phi(X),X)/ds^2_\phi(X,X),
   \qquad (X\in T_p\Sigma_{\phi}\setminus\{0\})
 \end{equation}
 is called the {\it $\phi$-singular normal curvature\/}
 at $p$ with respect to the direction $X$.
 The eigenvalues of $S_\phi$ are called the 
 {\it $\phi$-singular principal curvatures\/}, 
 which give the critical values of the singular normal curvature on
 $T_p\Sigma_\phi$.
\end{definition}
When $m=2$, the $\phi$-singular principal curvature 
is called (simply) the {\it $\phi$-singular curvature},
which is also denoted by $\kappa_\phi$.
This definition of the singular curvature
is the same as in \cite[(1.7)]{SUY1} and \cite[(1.6)]{SUY2}.
More precisely, $\kappa_\phi$ is computed as follows:
Let $p\in\Sigma_\phi$ be an $A_2$-point of $\phi$.
Then the $\phi$-singular set $\Sigma_\phi$ is parametrized 
by a regular curve $\gamma(t)$ ($t\in I\subset\R$)
on $M^2$ on a neighborhood of $p$, 
and $\gamma(t)$ is an $A_2$-point of $\phi$ for each $t\in I$.
Since $\dot\gamma(t)$ ($\dot{~}=d/dt$) is not a null-direction,
$\phi\bigl(\dot\gamma(t)\bigr)\neq 0$.
Take a section $\vect{n}(t)$ of $\E$ along $\gamma$ such that
$\{\phi(\dot\gamma)/|\phi(\dot\gamma)|,\vect{n}\}$
gives a positive orthonormal frame field on $\E$ along $\gamma$,
where
$|\phi(\dot\gamma)|=\inner{\phi(\dot\gamma)}{\phi(\dot\gamma)}^{1/2}$.
Then we have
\begin{equation}\label{eq:singular-curvature-2}
  \kappa_\phi(t) 
   :=\kappa_{\phi}\bigl(\dot\gamma(t)\bigr)
    = -\sgn \left(d\lambda_\phi\bigl(\eta(t)\bigr)\right)
    \frac{\inner{D_{d/dt}\vect{n}(t)}{\phi\bigl(\dot\gamma(t)\bigr)}}{%
            |\phi\bigl(\dot\gamma(t)\bigr)|^2},
\end{equation}
where $\eta(t)$ is a null-vector field along $\gamma(t)$ such that
$\{\dot\gamma(t),\eta(t)\}$ is compatible with the orientation of $M^2$.
By \eqref{eq:sign2}, 
it holds that 
\begin{equation}\label{eq:sign3}
   \sgn\bigl(d\lambda_\phi(\eta(t))\bigr)
   =\begin{cases}
    \hphantom{-}1 
       & \mbox{if  $M^+_{\phi}$ lies on 
               the left-hand side of $\gamma$},\\
     -1 
       & \mbox{if  $M^-_{\phi}$ lies on 
               the left-hand side of $\gamma$}.
\end{cases}
\end{equation}

\subsection{Behavior of the singular curvatures}
We now prove the following:
\begin{theorem}[A generalization of {\cite[Corollary 1.14]{SUY1}}]
\label{thm:infty}
 Let $p\in\Sigma_{\phi}$ be a $\phi$-singular point of 
 a coherent tangent bundle $(M^m,\E,\inner{~}{~},D,\varphi)$, 
 and assume $p$ is non-degenerate but not an $A_2$-point of $\phi$.
 Take a regular curve 
 $\gamma\colon [0,1]\ni t\mapsto \gamma(t)\in\Sigma_\phi$ such that
 such that $\gamma\bigl((0,1]\bigr)$ consists only of $A_2$-points 
 of $\phi$ and $\gamma(0)=p$.
 Then one of the $\phi$-singular principal curvatures along
 $\gamma(t)$ diverges to $-\infty$.
\end{theorem}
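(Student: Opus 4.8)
The plan is to produce, for each point $q$ near $p$ on $\gamma$, a single tangent direction $X(q)\in T_q\Sigma_\phi$ along which the $\phi$-singular normal curvature $\kappa_\phi(X(q))$ of \eqref{eq:kappa} tends to $-\infty$ as $q\to p$; since $S_\phi$ is symmetric with respect to $ds^2_\phi$ (Proposition~\ref{prop:sym}), its smallest eigenvalue at $q$ is $\le\kappa_\phi(X(q))$, and the theorem follows. To set up, I would take a positively oriented coordinate neighborhood $(x,y_1,\dots,y_{m-1})$ centered at $p$ with $\Sigma_\phi=\{x=0\}$ (possible since $p$ is non-degenerate; then $\lambda_\phi=x\,u$ with $u:=\partial_x\lambda_\phi$ nowhere zero near $p$) and, since $p$ is not an $A_2$-point, with $\eta(p)=\partial_{y_1}|_p\in T_p\Sigma_\phi$. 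Writing $\eta=a\,\partial_x+\sum_j b_j\,\partial_{y_j}$ along $\Sigma_\phi$, we have $a(p)=0$, $b_1(p)=1$, $b_j(p)=0$ $(j\ge2)$, and $a(\gamma(t))\ne0$ for small $t>0$. The test direction I would use is the nearly-null vector
\[
   X(q):=\eta(q)-a(q)\,\partial_x=\sum_j b_j(q)\,\partial_{y_j}\in T_q\Sigma_\phi .
\]
Since $\phi(\eta)=0$ this gives $\phi(X(q))=-a(q)\,\phi(\partial_x)$, so $ds^2_\phi(X(q),X(q))=a(q)^2\,|\phi(\partial_x)|^2$, where $|\phi(\partial_x)|^2$ is continuous and positive near $p$ because $\partial_x|_p\notin\Ker\phi_p=\R\,\eta(p)$. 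Thus the first fundamental form of $\Sigma_\phi$ degenerates along $X(q)$ at rate $a(q)^2$.

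For the numerator I would use the symmetric bilinear form $h(V,W):=\inner{\vect{n}}{D_V\phi(W)}$ on $T_qM^m$ (symmetry follows from \eqref{eq:c} together with $\vect{n}(q)\perp\phi(T_qM^m)$, valid at $A_2$-points), so that, as in the proof of Proposition~\ref{prop:sym}, $ds^2_\phi(S_\phi(X),Y)=\sgn(d\lambda_\phi(\eta(q)))\,h(X,Y)$ for $X,Y\in T_q\Sigma_\phi$. The identity I expect to be decisive is that the section $\phi(\tilde\eta)$ vanishes on $\Sigma_\phi$, which forces $h(\eta(q),Z)=\inner{\vect{n}}{D_Z\phi(\tilde\eta)}=0$ for all $Z\in T_q\Sigma_\phi$; applied to $Z=X(q)$ together with $\eta(q)=a(q)\,\partial_x+X(q)$ this yields $h(X(q),X(q))=-a(q)\,h(X(q),\partial_x)$, one order less degenerate than the denominator. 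Using $d\lambda_\phi(\eta)=a\,u$ on $\Sigma_\phi$ I then get
\[
   \kappa_\phi(X(q))
   =\frac{-\sgn(a(q)u(q))\,a(q)\,h(X(q),\partial_x)}{a(q)^2\,|\phi(\partial_x)|^2}
   =\frac{-\sgn(u(q))\,h(X(q),\partial_x)}{|a(q)|\,|\phi(\partial_x)|^2},
\]
so $\kappa_\phi(X(q))$ blows up like $1/|a(q)|$ and it only remains to show $\sgn(u(p))\,h(X(p),\partial_x)>0$ (the relevant scalars extend continuously across $p$ along $\gamma$, since $\phi(T_qM^m)$, hence $\vect{n}(q)$, does).

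To pin down the sign I would relate $h(X(p),\partial_x)$ to $u(p)$: differentiating $\lambda_\phi=\mu(\phi(\partial_x),\phi(\partial_{y_1}),\dots,\phi(\partial_{y_{m-1}}))$ along $\partial_x$, using $D\mu=0$, and passing to the limit at $p$ (where $\phi(\partial_{y_1})_p=\phi(\eta(p))=0$, so every term except the one differentiating the $\phi(\partial_{y_1})$-slot has two linearly dependent arguments), I expect to find $u(p)=C\,h(X(p),\partial_x)$ with $C:=\mu(\phi(\partial_x)_p,\vect{n}(p),\phi(\partial_{y_2})_p,\dots,\phi(\partial_{y_{m-1}})_p)\ne0$ (here $\{\phi(\partial_x)_p,\phi(\partial_{y_2})_p,\dots,\phi(\partial_{y_{m-1}})_p\}$ is a basis of the hyperplane $\phi(T_pM^m)$ and $\vect{n}(p)$ a unit normal to it). Then $\sgn(u(p))\,h(X(p),\partial_x)=|u(p)|/C$, so everything reduces to $C>0$, which I would check by the same orientation bookkeeping as in the $2$-dimensional case \cite{SUY1}: at an $A_2$-point $q$ near $p$, a positively oriented $ds^2_\phi$-orthonormal frame $e_i=\sum_j P_{ij}\,\partial_{y_j}$ with $(e_1,\dots,e_{m-1},\eta)$ positive on $M^m$ has $\sgn\det P=(-1)^{m+1}\sgn a(q)$; the defining condition $\mu(\phi(e_1),\dots,\phi(e_{m-1}),\vect{n})=1$ for the conormal together with multilinearity gives $\det P\cdot\mu(\phi(\partial_{y_1})_q,\dots,\phi(\partial_{y_{m-1}})_q,\vect{n}(q))=1$; substituting the singular relation $a(q)\phi(\partial_x)_q+\sum_j b_j(q)\phi(\partial_{y_j})_q=0$ into the first slot and letting $q\to p$ gives $\sgn C=+1$.

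The hard part is exactly this last sign: the divergence rate $1/|a(q)|$ drops out essentially for free once the nearly-null test direction and the identity $h(\eta,\,\cdot\,)|_{T\Sigma_\phi}=0$ are in place, but showing that $\kappa_\phi(X(q))\to-\infty$ rather than $+\infty$ forces one to keep careful track of the orientation of $M^m$, the co-orientation $\mu$ of $\E$, and the induced signs of $\vect{n}$, $\eta$ and $\partial_x\lambda_\phi$. Granting $C>0$, the numerator in the displayed formula tends to $-|u(p)|/|\phi(\partial_x)_p|^2<0$ while $|a(q)|\to0^+$, so $\kappa_\phi(X(\gamma(t)))\to-\infty$; since the least $\phi$-singular principal curvature at $\gamma(t)$ is at most $\kappa_\phi(X(\gamma(t)))$, it too diverges to $-\infty$.
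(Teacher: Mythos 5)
Your proposal is correct and follows essentially the same route as the paper's own proof: there, one also takes adapted coordinates with $\Sigma_\phi=\{u_m=0\}$ and $\eta=\partial_1+\delta\,\partial_m$, evaluates the singular normal curvature in the nearly-null tangent direction $\partial_1$ (your $X(q)=\eta-a\,\partial_x$), uses the vanishing of $\phi(\eta)$ along $\Sigma_\phi$ (your identity $h(\eta,\cdot)|_{T\Sigma_\phi}=0$) together with \eqref{eq:c} to identify the unbounded part of the numerator with the transversal derivative $\lambda_m=\partial\lambda_\phi/\partial u_m$, and concludes via the symmetry of $S_\phi$. The only difference is bookkeeping: the paper packages your sign constant $C$ and the continuity of $\vect{n}$ into the explicit wedge-product conormal \eqref{eq:use_next_paper} and its reduction \eqref{eq:n-red}, whereas you fix the sign by a separate frame-determinant computation, so the two arguments coincide in substance.
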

\begin{proof}
 We can take a local coordinate system $(U;u_1,\dots,u_{m})$ on $M^m$
 at $p$ satisfying the following properties:
 \begin{enum}
 \item\label{item:coord-1}
       $(u_1,\dots,u_{m})$ is compatible with respect to the
       orientation of $M^m$,
 \item\label{item:coord-2}
       the $\phi$-singular submanifold is characterized by the zeros of the 
       last coordinate function, that is,
       $\Sigma_\phi\cap U=\{u_m=0\}$.
 \item\label{item:coord-3} 
       the vector field 
        $\eta:=\frac{\partial}{\partial u_1} 
                 + \delta \frac{\partial}{\partial u_m}$
       gives the null direction,
       where $\delta:=\delta(u_1,\dots,u_{m-1})$
       is a $C^\infty$-function satisfying $\delta(p)=0$.  
       (In fact, $\eta(p)$ must tangent to the singular manifold,
       since $p$ is not an $A_2$-point.)
 \end{enum}
 For the sake of simplicity,  we set
 \begin{equation}\label{eq:notation}
     \partial_j:=\frac{\partial}{\partial u_j},\quad
     \phi_j:=\phi(\partial_j),\quad D_j:=D_{\partial_j}\qquad 
     (j=1,\dots,m).
 \end{equation}
 Since $\gamma(t)$ ($t\in (0,1]$) is an $A_2$-point of $\phi$,
 \ref{item:coord-2} and \ref{item:coord-3} imply that
 $\delta(t):=\delta\bigl(\gamma(t)\bigr)$
 does not vanish for
 $t\in (0,1]$.
 Then $\{\partial_1,\dots,\partial_{m-1},\sgn(\delta)\eta\}$
 forms a positive frame field on $TM^m$ along $\gamma(t)$ ($t\neq 0$).
 By definition, the $\phi$-singular normal curvature
 along $\gamma$ with respect to $\partial_1$ is given by 
 \begin{align}
  \label{eq:kappa-phi-t}
  \kappa_\phi(t)\biggl(
     :&=\kappa_\phi(\partial_1)\biggr)
       =-\sgn \left(d\lambda_\phi\bigl(\sgn(\delta)\eta\bigr)\right) 
       \frac{\inner{D_1\vect{n}}{\phi_1}}{|\phi_1|^2}\\
      &=-\sgn(\delta)
       \sgn \bigl(d\lambda_\phi(\eta)\bigr) 
       \frac{\inner{\vect{n}}{D_1\phi_1}}{|\phi_1|^2}.
  \nonumber
 \end{align}
 Since $\eta$ is a null vector, it holds that
 \begin{equation}\label{eq:null}
      \phi(\eta)=\phi_1+\delta \phi_m=0
 \end{equation}
 on $\Sigma_\phi$.
 In particular, since $\eta=\partial_1$ at $p$, we have that 
 \begin{equation}\label{eq:phi-m}
     \phi_m(p)\ne 0.
 \end{equation}
 Differentiating \eqref{eq:null} by $u_1$, we have
 \begin{equation}\label{eq:null2}
    D_1\phi_1+\delta_1 \phi_m+\delta D_1 \phi_m=0
     \qquad \left(\delta_1:=\frac{\partial \delta}{\partial u_1}\right).
 \end{equation}
 We can identify $\bigwedge_{j=1}^{m-1}\E_q\cong \E_q$
 for each $q\in M^m$ by using the inner product on $\E$,
 and then we set
 \begin{equation}\label{eq:use_next_paper}
      \vect{n}=\frac{\phi_1\wedge\dots \wedge \phi_{m-1}}
               {|\phi_1\wedge\dots \wedge \phi_{m-1}|},
 \end{equation}
 which gives a conormal vector field such that
 $\{\phi_1,\dots,\phi_{m-1}, \vect{n}\}$
 is a positive frame field on $\E$ along $\gamma(t)$
 ($t\neq 0$).

 By \eqref{eq:null}, we have $\phi_1=-\delta \phi_m$.
 Then
 \begin{equation}\label{eq:n-red}
   \vect{n}=
      -\frac{\delta}{|\delta|}\cdot 
      \frac{\phi_m\wedge\phi_2\wedge \dots \wedge \phi_{m-1}}
      {|\phi_m\wedge\phi_2\wedge \dots \wedge \phi_{m-1}|}
      =(-1)^{m-1} \sgn(\delta)
      \frac{\phi_2\wedge \dots \wedge \phi_{m}}
      {|\phi_2\wedge \dots \wedge \phi_{m}|}.
 \end{equation}
 Substituting \eqref{eq:null} and \eqref{eq:null2} into 
 \eqref{eq:kappa-phi-t},  we have that
 \begin{align*}
  \kappa_\phi(t)&=
     -\sgn(\delta)\sgn \bigl(d\lambda_\phi(\eta)\bigr) 
   \frac{\inner{\vect{n}}{\delta_1 \phi_m+\delta D_1 \phi_m}}
        {|\delta \phi_m|^2} \\
    &=-    \sgn \bigl(d\lambda_\phi(\eta)\bigr) 
        \frac{\inner{\vect{n}}{D_1 \phi_m}}{|\delta|\, |\phi_m|^2}. 
 \end{align*}
 Set
 \[
    \Delta := 
         \frac{(-1)^{m-1}\sgn \bigl(d\lambda_\phi(\eta)\bigr)}
            {|\delta|\, |\phi_m|^2|\phi_2\wedge \dots \wedge \phi_{m}|}
       \qquad\text{and}\qquad
     \lambda_m:=\frac{\partial\lambda_\phi}{\partial u_m}.
 \]
 Then substituting \eqref{eq:n-red} into the above equation and 
 noticing that $\phi_1$ is proportional to $\phi_m$, we have
 \begin{align*}
  \frac{\kappa_\phi(t)}{\Delta}&=
      \inner{\phi_2\wedge \dots \wedge \phi_{m}}{D_m\phi_1}\\
  &= \partial_m
      \inner{\phi_2\wedge\dots\wedge\phi_m}{\phi_1}
      -\inner{D_m(\phi_2\wedge\dots\wedge\phi_m)}{\phi_1}\\
  &= (-1)^{m-1}\partial_m\mu(\phi_1,\dots,\phi_m)
      -\inner{\phi_2\wedge\dots\wedge D_m\phi_m}{\phi_1}\\
  &= (-1)^{m-1}\lambda_m
        +
        \inner{\phi_2\wedge\dots\wedge D_m(\delta\phi_1)}{\phi_1}\\
  &= (-1)^{m-1}\lambda_m
        +
        \delta\inner{\phi_2\wedge\dots\wedge D_m \phi_1}{\phi_1}.
 \end{align*}
 Hence 
 \[
     \kappa_\phi(t)=
         \frac{\sgn \bigl(d\lambda_\phi(\eta)\bigr)\sgn(\delta)}
            {|\delta|\, |\phi_m|^2|\phi_2\wedge \dots \wedge \phi_{m}|}
         \lambda_m + 
         \text{(a bounded term)},
 \]
 because of \eqref{eq:phi-m}.
 Since
 $d\lambda_\phi(\eta)=d\lambda_\phi(\partial_1+\delta \partial_m)
           =\delta \lambda_m$,
 we have that
 \[
   \kappa_{\phi}(t)=
   -\frac{|\lambda_m|}{%
           |\delta|\,|\phi_m|^2|\phi_2\wedge \dots \wedge \phi_{m}|} 
   +\mbox{(a bounded term)}. 
 \]
 Since $p$ is non-degenerate and $\Sigma_\phi=\{u_m=0\}$, 
 $\lambda_m\ne 0$ holds. 
 Thus, $\kappa_{\phi}(t)$ tends to $-\infty$ as $t\to +0$.
 Hence, at least one of the singular principal curvatures tends to
 $-\infty$.
\end{proof}
\subsection{Frontal bundles}
At the end of  this section, 
we give a definition of frontal bundles
as an intrinsic characterization of
wave fronts in space forms.

Let $M^m$ be an  oriented $m$-manifold and 
$(M^m,\E,\inner{~}{~},D,\phi)$ 
a co-orientable coherent tangent bundle over $M^m$.
If there exists another  bundle homomorphism $\psi:TM^m\to \E$
such  that $(M^m,\E,\inner{~}{~},D,\psi)$
is also a coherent tangent bundle and
the pair $(\phi,\psi)$ of bundle homomorphisms satisfies a compatibility
condition
\begin{equation}\label{eq:compati}
  \inner{\phi(X)}{\psi(Y)}=\inner{\phi(Y)}{\psi(X)},
\end{equation}
then $(M^m,\E,\inner{~}{~},D,\phi,\psi)$ is called a {\it frontal bundle}.
The bundle homomorphisms $\phi$ and $\psi$ are called
the {\em first homomorphism\/} and the {\em second homomorphism},
respectively.
We set
\begin{align*}
 \first(X,Y)&:=ds^2_{\phi}(X,Y)=\inner{\phi(X)}{\phi(Y)}, \\
 \second(X,Y)&:=-\inner{\phi(X)}{\psi(Y)},\\
 \third(X,Y)&:=ds^2_{\psi}(X,Y)= \inner{\psi(X)}{\psi(Y)} 
\end{align*}
for $X,Y\in T_pM^m$ ($p\in M^m$),  and we call them 
{\it the first, the second and the third fundamental forms},
respectively. 
They are all symmetric covariant tensors on $M^m$.

\begin{definition}\label{def:front}
 A frontal bundle $(M^m,\E,\inner{~}{~},D, \phi,\psi)$
 is called  a {\it front bundle\/} if
 \begin{equation}\label{eq:front}
  \Ker(\phi_p)\cap \Ker(\psi_p)=\{0\}
 \end{equation}
 holds for each  $p\in M^m$.
\end{definition}

\begin{example}\label{ex:front-bundle}
 Let $\bigl(N^{m+1}(c),g\bigr)$ be 
 an $(m+1)$-dimensional space form,
 that is, a complete Riemannian $(m+1)$-manifold of constant curvature
 $c$,
 and denote by $\nabla$ the Levi-Civita connection 
 on $N^{m+1}(c)$.
 Let $f:M^m\to N^{m+1}(c)$ be a co-orientable frontal.
 Then there exists a globally defined unit normal vector field $\nu$.
 Since the coherent tangent bundle $\E_f$ given in
 Example~\ref{ex:front} is orthogonal to $\nu$,
 we can define a bundle homomorphism
 \[
    \psi_f:T_pM^m\ni X\longmapsto \nabla_X\nu \in \E_p
            \qquad (p\in M^m).
 \]
 Then $(M^m,\E_f,\inner{~}{~},D,\phi_f,\psi_f)$ is a frontal bundle.
 Moreover, this is a front bundle in the sense of
 Definition~\ref{def:front} if and only if $f$ is a front,
 which is equivalent to $\first+\third$ being positive definite.
 As a fundamental theorem for hypersurfaces, 
 the integrability condition for a
 given frontal bundle to be realized as a frontal 
 in a space form is given in \cite{SUY6}.
\end{example}

We fix a front bundle $(M^m,\E,\inner{~}{~},D,\phi,\psi)$
over an $m$-dimensional manifold $M^m$.
\begin{definition}\label{def:ext}
 When $p\in M^m$ is not a singular point of $\phi$, we define
 \begin{equation}\label{eq:ext}
      K^{\ext}(X\wedge Y):=
      \frac{
        \second(X,X) \second(Y,Y)
            -\second(X,Y)^2
      }{
         I(X,X) I(Y,Y)
             -I(X,Y)^2
      }
     \qquad (X,Y\in T_pM^m),
 \end{equation}
 which is called the {\em extrinsic curvature\/} at $p$ 
 with respect to the $X\wedge Y$-plane in $T_pM^m$.
\end{definition}

If a front bundle $(M^m,\E,\inner{~}{~},D,\phi,\psi)$
is induced from a front in $N^{m+1}(c)$,
then it holds that
 \begin{equation}\label{eq:ext2}
      K^{\ext}(X\wedge Y)=K(X\wedge Y)+c
     \qquad (X,Y\in T_pM^m),
 \end{equation}
where $K(X\wedge Y)$ is the sectional curvature
at each $\phi$-regular point $p$ of $M^m$. 

As a generalization of \cite[Theorem 3.1]{SUY1},
relationships between singular principal curvatures
and  $K^{\ext}$ of wave fronts in space forms are
investigated in \cite{SUY6}.

\section{Four Gauss-Bonnet formulas on surfaces}
\label{sec:gauss-bonnet}
In this section, we give four Gauss-Bonnet formulas
on a given front in $3$-dimensional space forms,
and will point out several remarkable consequences of them.

\subsection{The Gauss-Bonnet formulas for smooth maps}
Let  $M^m$ be an oriented $m$-manifold and 
$(N^m,g)$ an oriented Riemannian $m$-manifold.
As in Example \ref{ex:map},
a $C^\infty$-map $f\colon{}M^m\to N^m$ induces a coherent tangent bundle
$(M^m,\E_f,\inner{~}{~},D,\phi_f=df)$ over $M^m$.
In this setting, an $A_k$-point 
(cf.~Definitions~\ref{def:a2-point} and \ref{def:a3-point})
coincides with an $A_{k}$-Morin singular point of $f$
(see \cite{SUY3}).

Now we restrict our attention to the case $m=2$:
An $A_2$-point (resp.\ $A_3$-point) of $\phi_f$ on $M^2$  is called 
a {\it fold\/} (resp. a {\it cusp}); namely,
a fold (resp.\ a cusp) is right-left equivalent to the map
\[
   \R^2\ni (u,v)\longmapsto (u^2,v)\in \R^2 \quad 
   \bigl(\mbox{resp.\ }  \R^2\ni (u,v)
   \longmapsto (uv+v^3,u)\in \R^2 \bigr)
\]
at the origin.
Here, two map germs $f_{i}:(\R^m,p)\to(\R^n,q)$ $(i=1,2)$
are right-left equivalent if
there exist diffeomorphism germs
$\xi_1:(\R^m,p)\to(\R^m,p)$ and 
$\xi_2:(\R^n,q)\to(\R^n,q)$ such that
$\xi_2\circ f_1=f_2\circ\xi_1$ holds.

The $\phi_f$-singular curvature $\kappa_f(p)$ of
an $A_2$-point $p$ of $\phi_f$ (i.e.\ a fold)
is called the {\it singular curvature\/} at a fold.
The following assertion follows immediately.
\begin{proposition}\label{prop:geod}
 Let $f:M^2\to N^2$ be a $C^\infty$-map and $p$ a fold singular point.
 Suppose that $\gamma(t)$ is a regular curve which parametrizes the
 singular set so that
 $f(M^2)$ lies on the left-hand side of $f\circ\gamma$.
 Then the singular curvature $\kappa_f(p)$ at $p$ is equal to
 the geodesic curvature of $f\circ \gamma$.
\end{proposition}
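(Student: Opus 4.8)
The plan is to unwind the definitions and show that for a fold singular point of a smooth map $f\colon M^2\to N^2$, the singular shape operator recovers, through the $\phi_f$-metric $ds^2_{\phi_f}$, exactly the covariant derivative of the unit tangent of the image curve. Recall that here $\E_f=f^*TN^2$, the connection $D$ is the pullback of the Levi-Civita connection $\nabla^g$ on $N^2$, and $\phi_f=df$. Along the singular curve $\gamma(t)$, the vector $\phi_f(\dot\gamma)=df(\dot\gamma)=(f\circ\gamma)^{\displaystyle\cdot}$ is nonzero because $\dot\gamma$ is not a null direction (the point is an $A_2$-point). So $f\circ\gamma$ is a regular curve in $N^2$, and its geodesic curvature is, by definition, $\kappa_g=\inner{\nabla^g_{d/dt}\vect{t}}{\vect{m}}/|(f\circ\gamma)^{\displaystyle\cdot}|$ where $\vect{t}$ is the unit tangent and $\vect{m}$ the leftward unit normal.

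The key computation is to compare this with formula \eqref{eq:singular-curvature-2}. Take $\vect{n}(t)$ as in that formula: the positively oriented orthonormal frame $\{\phi_f(\dot\gamma)/|\phi_f(\dot\gamma)|,\vect{n}\}$ on $\E_f$ along $\gamma$. Under the identification $\E_f=f^*TN^2$, this frame is precisely $\{\vect{t},\vect{m}\}$ provided the orientations match up — and this is where I would be careful: the sign in \eqref{eq:singular-curvature-2} involves $\sgn(d\lambda_{\phi_f}(\eta))$, and by \eqref{eq:sign3} this sign is $+1$ exactly when $M^+_{\phi_f}$ lies to the left of $\gamma$. Since $f$ is a (generic) map into $N^2$ and $f(M^2)$ is assumed to lie on the left of $f\circ\gamma$, the region that maps with positive Jacobian near the fold is the one whose image covers that left side; so the hypothesis "$f(M^2)$ lies on the left" pins down which of $M^\pm_{\phi_f}$ is on the left, and the sign factor combines with the choice of $\vect{n}$ versus $\vect{m}$ to give exactly $\kappa_{\phi_f}(t)=\inner{D_{d/dt}\vect{n}}{\phi_f(\dot\gamma)}/|\phi_f(\dot\gamma)|^2$ without spurious sign. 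Using $D_{d/dt}=\nabla^g_{d/dt}$ and $\phi_f(\dot\gamma)=(f\circ\gamma)^{\displaystyle\cdot}=|(f\circ\gamma)^{\displaystyle\cdot}|\,\vect{t}$, this is $\inner{\nabla^g_{d/dt}\vect{m}}{\vect{t}}/|(f\circ\gamma)^{\displaystyle\cdot}|=-\inner{\vect{m}}{\nabla^g_{d/dt}\vect{t}}/|(f\circ\gamma)^{\displaystyle\cdot}|$... so I must track that the conormal $\vect{n}$ is the \emph{right}ward normal (so that $\{\phi_f(\dot\gamma)/|\cdot|,\vect n\}$ is positively oriented in the sense used, versus $\{\vect t,\vect m\}$ with $\vect m$ leftward), which contributes the final sign making it $\kappa_g$.

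Concretely the steps are: (1) observe $f\circ\gamma$ is regular and identify $\vect{t}=\phi_f(\dot\gamma)/|\phi_f(\dot\gamma)|$; (2) identify the conormal $\vect{n}$ from \eqref{eq:use_next_paper} (or the $m=2$ version) with $\pm\vect{m}$ and determine the sign from the co-orientation $\mu$ versus the orientation of $N^2$; (3) translate $D_{d/dt}$ into $\nabla^g_{d/dt}$ via the definition of $D$ in Example~\ref{ex:map}; (4) use \eqref{eq:sign3} together with the left-side hypothesis to evaluate $\sgn(d\lambda_{\phi_f}(\eta))$; (5) substitute into \eqref{eq:singular-curvature-2} and check that all signs cancel, yielding $\kappa_{\phi_f}(t)=\inner{\nabla^g_{d/dt}\vect{t}}{\vect{m}}/|(f\circ\gamma)^{\displaystyle\cdot}|=\kappa_g$.

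The main obstacle is purely bookkeeping of orientations and signs: there are three independent sign conventions in play — the co-orientation $\mu$ of $\E_f$, the orientation of the target $N^2$, and the ambient orientation of $M^2$ that fixes "left-hand side" — and the statement is only correct if the hypothesis "$f(M^2)$ lies on the left of $f\circ\gamma$" is used to synchronize them. I do not expect any analytic difficulty; once the frame $\{\vect{t},\vect{m}\}$ is correctly matched (with $\vect m$ the leftward normal of $f\circ\gamma$) to the frame $\{\phi_f(\dot\gamma)/|\phi_f(\dot\gamma)|,\vect n\}$ appearing in \eqref{eq:singular-curvature-2}, and the $\sgn(d\lambda_{\phi_f}(\eta))$ factor is evaluated via \eqref{eq:sign3} using the left-side hypothesis, the identity $\kappa_f(p)=\kappa_g(f\circ\gamma)(p)$ drops out immediately because $D$ \emph{is} the Levi-Civita connection of $N^2$ pulled back along $f$. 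Hence the proof is essentially a one-line observation once the conventions are laid out.
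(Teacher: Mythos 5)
Your plan is exactly the definitional unwinding the paper has in mind (the paper offers no argument at all, declaring the proposition immediate from Example~\ref{ex:map} and \eqref{eq:singular-curvature-2}), and steps (1), (3), (5) are fine: $D$ is the pulled-back Levi-Civita connection, $\phi_f(\dot\gamma)$ is the velocity of $f\circ\gamma$, and $\vect{n}$ is, up to the co-orientation choice, a unit normal of $f\circ\gamma$, so the only real content is the sign synchronization you single out.

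The one place your justification does not hold up as written is the sentence carrying step (4): ``the region that maps with positive Jacobian near the fold is the one whose image covers that left side.'' Near a fold \emph{both} local components $M^+_{\phi_f}$ and $M^-_{\phi_f}$ are mapped onto the \emph{same} side of $f\circ\gamma$ (that is what a fold is), so this criterion distinguishes nothing and cannot by itself convert the hypothesis (a condition in $N^2$) into the datum needed for \eqref{eq:sign3} (which of $M^\pm_{\phi_f}$ lies to the left of $\gamma$ in $M^2$). The correct bridge is: near the fold, $f$ restricted to the closure of $M^+_{\phi_f}$ is an orientation-preserving diffeomorphism onto the region bounded by $f\circ\gamma$, hence it takes the left side of $\gamma$ to the left side of $f\circ\gamma$; therefore ``$f(M^2)$ lies on the left of $f\circ\gamma$'' is equivalent to ``$M^+_{\phi_f}$ lies on the left of $\gamma$'', i.e.\ $\sgn\bigl(d\lambda_{\phi_f}(\eta)\bigr)=+1$ when $\mu$ is the pull-back of the volume form of $N^2$, and then \eqref{eq:singular-curvature-2} gives $\kappa_{\phi_f}=-\inner{\nabla^g_{d/dt}\vect{n}}{\vect{t}}/|(f\circ\gamma)^{\cdot}| =\inner{\vect{n}}{\nabla^g_{d/dt}\vect{t}}/|(f\circ\gamma)^{\cdot}|=\kappa_g$ with $\vect{n}$ the \emph{leftward} normal. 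Note also that your two sign assignments (dropping the $\sgn$ factor and taking $\vect{n}$ to be the rightward normal) are mutually consistent only for the opposite co-orientation $\mu=-f^*(\text{volume form})$; that choice is legitimate by Proposition~\ref{prop:sym}, but $\sgn\bigl(d\lambda_{\phi_f}(\eta)\bigr)$ and $\vect{n}$ are coupled through $\mu$ and must be evaluated with respect to one fixed choice, which your write-up does not do explicitly.
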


\begin{example}\label{ex:parabola}
 We set 
 $f_\epsilon(u,v):=(u^2+\epsilon v^2,v)$
 $(\epsilon:=\pm 1)$.
 If $\epsilon=1$ (resp. $\epsilon=-1$),
 then all singular points consist of folds with positive 
 (resp.\ negative) singular curvature, see Fig.\ \ref{fig:pn-folds}.
\end{example}

\begin{figure}[h]
 \begin{center}
        \includegraphics[height=3cm]{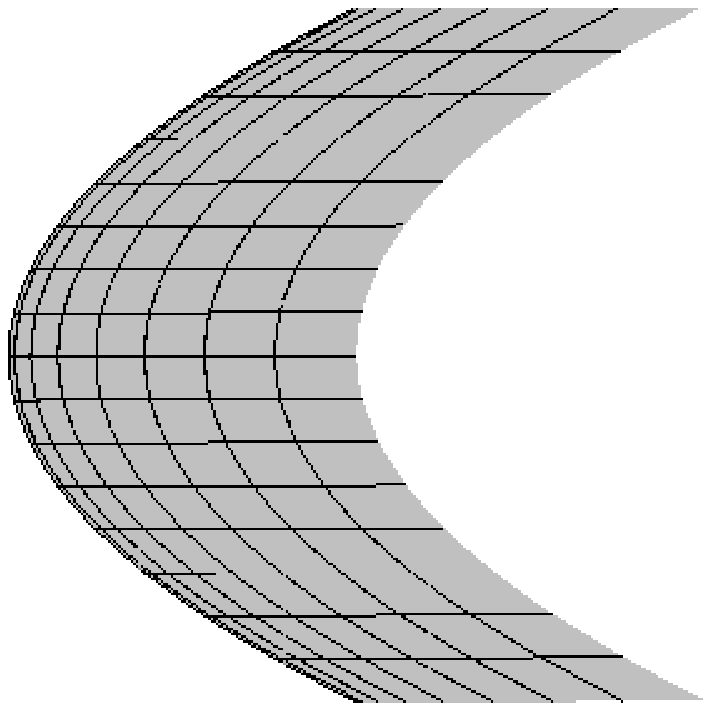}\hspace{1cm}
        \includegraphics[height=3cm]{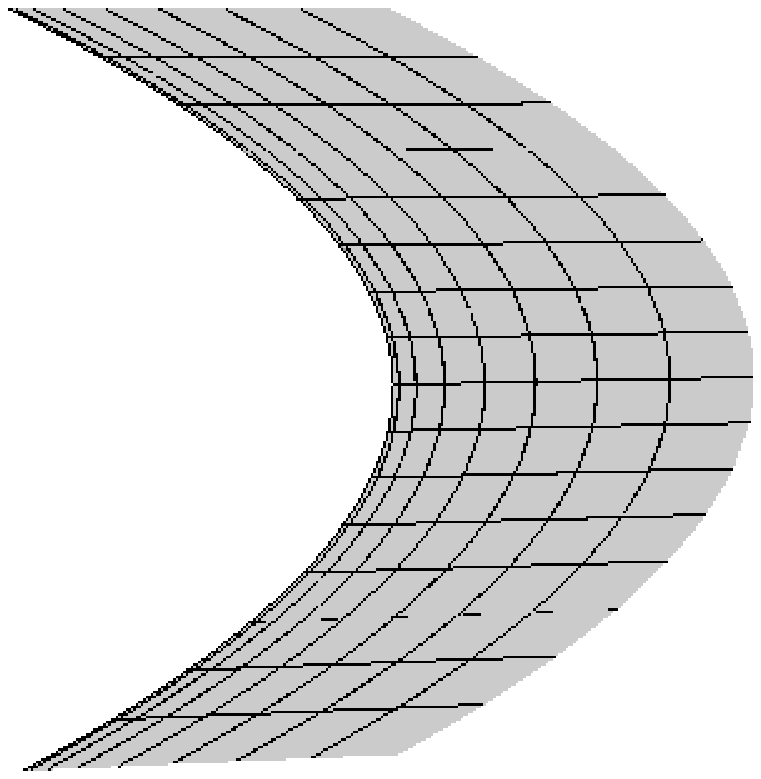}
 \end{center}
  \caption{%
  The images of $f_1$ and $f_{-1}$ 
  (the folds are the left edges in both figures).}
  \label{fig:pn-folds}
\end{figure}
Now we recall the following two Gauss-Bonnet formulas
on a given coherent tangent bundle:
\begin{fact}[\cite{SUY1,SUY2}]
\label{fact:suy}
 Let  $(M^2,\E,\inner{~}{~},D,\phi)$ be a coherent tangent bundle 
 over a compact oriented $2$-manifold $M^2$, and suppose that
 the $\phi$-singular set $\Sigma_\phi$ consists of $A_2$-points  
 and  $A_3$-points.
 We denote by $K$ the Gaussian curvature of the
 induced metric $ds^2=\phi^*\langle\,,\,\rangle$.
 Then it holds that
 \begin{align}
  (\chi_{\E}^{}=)&\frac1{2\pi}\int_{M^2}K\, d\hat A_\phi=
        \chi(M^+_\phi)-\chi(M^-_\phi)+S^+_\phi-S^-_\phi, 
  \label{eq:B}\\
  2\pi\chi(M^2)&=
  \int_{M^2}K\, dA_\phi+2\int_{\Sigma_\phi} \kappa_\phi\, d\tau_\phi, 
  \label{eq:A}
 \end{align}
where $d\tau_\phi$ is the 
length element on the  $\phi$-singular 
set with respect to $ds^2_\phi$,
and $S^+_\phi$ and $S^-_\phi$ are 
the numbers of positive and negative 
$A_3$-points of $\phi$, respectively  
{\rm(}see {\rm\cite[Figure 4]{SUY1}} and
also {\rm\cite[Definition 2.9]{SUY2}}{\rm)}.
 On the other hand,
 $\chi_{\E}^{}$ is the Euler characteristic of the
 $\SO(2)$-vector bundle $\E$.
\end{fact}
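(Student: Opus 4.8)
The plan is to derive both formulas from the classical Gauss--Bonnet theorem for compact Riemannian surfaces with smooth boundary, by excising a thin band around the degeneracy locus $\Sigma_\phi$ and letting its width tend to zero.

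Along $\Sigma_\phi$ fix adapted coordinates $(u,v)$ with $\Sigma_\phi=\{v=0\}$ and, locally, $M^+_\phi=\{v>0\}$, $M^-_\phi=\{v<0\}$; this is possible because $d\lambda_\phi\ne0$ on $\Sigma_\phi$ forces $\lambda_\phi$ to change sign cleanly across it, and near $A_2$-points one may in addition take $\partial/\partial v$ to span the null direction. Such coordinates exist near $A_3$-points too, since there $\Sigma_\phi$ is still a smooth curve in $M^2$ (only its image under a realizing front or map develops a cusp). For small $\varepsilon>0$ let $N_\varepsilon\subset M^2$ be the union of the bands $\{|v|<\varepsilon\}$ and put $M^\pm_\varepsilon:=M^\pm_\phi\setminus N_\varepsilon$. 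Each $M^\pm_\varepsilon$ is a compact surface with smooth boundary on which $ds^2_\phi$ is a genuine Riemannian metric, it is diffeomorphic to $\overline{M^\pm_\phi}$ (so $\chi(M^\pm_\varepsilon)=\chi(M^\pm_\phi)$), and, since $\chi(N_\varepsilon)=\chi(\Sigma_\phi)=0$, Mayer--Vietoris gives $\chi(M^+_\varepsilon)+\chi(M^-_\varepsilon)=\chi(M^2)$. Writing $\kappa_g$ for the geodesic curvature with respect to the induced boundary orientation and applying the classical Gauss--Bonnet theorem to $M^+_\varepsilon$ and to $M^-_\varepsilon$, the sum of the two identities gives
\[
 \int_{M^+_\varepsilon}K\,dA_\phi+\int_{M^-_\varepsilon}K\,dA_\phi
  =2\pi\chi(M^2)-\int_{\partial M^+_\varepsilon}\kappa_g\,ds-\int_{\partial M^-_\varepsilon}\kappa_g\,ds ,
\]
while their difference, rewritten with $d\hat A_\phi=dA_\phi$ on $M^+_\phi$ and $d\hat A_\phi=-dA_\phi$ on $M^-_\phi$, gives
\[
 \int_{M^+_\varepsilon}K\,d\hat A_\phi+\int_{M^-_\varepsilon}K\,d\hat A_\phi
  =2\pi\bigl(\chi(M^+_\phi)-\chi(M^-_\phi)\bigr)
   +\int_{\partial M^-_\varepsilon}\kappa_g\,ds-\int_{\partial M^+_\varepsilon}\kappa_g\,ds .
\]

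The heart of the matter is the limit $\varepsilon\to0$. On the left, one checks that $K\,dA_\phi$ is integrable on $M^2$ — it is bounded, though discontinuous, along the $A_2$-part of $\Sigma_\phi$ and has an integrable singularity at each $A_3$-point — so the first left-hand side tends to $\int_{M^2}K\,dA_\phi$; whereas $K\,d\hat A_\phi$ coincides on $M^2\setminus\Sigma_\phi$ with a scalar reduction of the curvature $2$-form of $D$ on the oriented $\SO(2)$-bundle $\E$ (using that $\phi^*D$ is the Levi-Civita connection at $\phi$-regular points) and extends smoothly over $\Sigma_\phi$, so $\int_{M^2}K\,d\hat A_\phi$ is already meaningful and equals $2\pi\chi_\E$ by the Chern--Gauss--Bonnet theorem for $\E$, identifying the left side of \eqref{eq:B} with $\chi_\E$. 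For the boundary terms, the cuspidal-edge normal form of $ds^2_\phi$ together with \eqref{eq:singular-curvature-2} shows that along the $A_2$-part both $\int_{\partial M^+_\varepsilon}\kappa_g\,ds$ and $\int_{\partial M^-_\varepsilon}\kappa_g\,ds$ converge to $\int_{\Sigma_\phi}\kappa_\phi\,d\tau_\phi$ (this is not formal, since the Levi-Civita connection of $ds^2_\phi$ itself degenerates along $\Sigma_\phi$, and it is what produces the factor $2$ in \eqref{eq:A}). Near an $A_3$-point $\kappa_\phi\to-\infty$ (the $m=2$ case of Theorem~\ref{thm:infty}) while $\kappa_\phi\,d\tau_\phi$ remains integrable, and an analysis of the parallel curves $\partial M^\pm_\varepsilon$ around the corresponding cusp of the image curve shows that, beyond $\int\kappa_\phi\,d\tau_\phi$, the curves $\partial M^+_\varepsilon$ and $\partial M^-_\varepsilon$ acquire opposite turning-angle contributions there; these cancel in the sum, so the first display still yields \eqref{eq:A}, and add, over all $A_3$-points, to $2\pi(S^+_\phi-S^-_\phi)$ in the difference, so the second display yields \eqref{eq:B}.

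I expect the main obstacle to be the asymptotic analysis of $\int_{\partial M^\pm_\varepsilon}\kappa_g\,ds$ as $\varepsilon\to0$: proving that these integrals converge at all (the metric degenerates transversally to $\Sigma_\phi$, and at $A_3$-points $\kappa_\phi$ is unbounded, so one must at the same time check that $\int_{\Sigma_\phi}\kappa_\phi\,d\tau_\phi$ converges — the growth of $\kappa_\phi$ in the arclength parameter $\tau_\phi$ being slower than $\tau_\phi^{-1}$), and then pinning down the limits, in particular the $\pm$ turning-angle jumps at the $A_3$-points whose bookkeeping yields the term $S^+_\phi-S^-_\phi$. A secondary technical point is the integrability of $K\,dA_\phi$ across $\Sigma_\phi$, which is what legitimizes passing to the limit on the left-hand sides.
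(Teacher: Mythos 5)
First, note that the paper does not prove this statement at all: it is quoted as a Fact from \cite{SUY1,SUY2}, where it is established via an ``admissible'' triangulation adapted to $\Sigma_\phi$, local Gauss--Bonnet formulas for singular triangles, and a computation of limiting interior angles at peaks. Your band-excision scheme is the same strategy in different packaging (classical Gauss--Bonnet on the regular part, $K\,d\hat A_\phi=d\omega$ smooth so that Chern--Weil gives $\chi_{\E}$, and a limiting analysis along $\Sigma_\phi$), and it is viable: in examples one can check that both $\int_{\partial M^{\pm}_\varepsilon}\kappa_g\,ds$ do converge to $\int_{\Sigma_\phi}\kappa_\phi\,d\tau_\phi$, which is exactly where the factor $2$ in \eqref{eq:A} comes from.

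The genuine gap is that the two lemmas carrying the entire content of the theorem are asserted, not proved, and you say so yourself (``I expect the main obstacle to be\dots''). Concretely: (i) the convergence of $\int_{\partial M^{\pm}_\varepsilon}\kappa_g\,ds$ to $\int_{\Sigma_\phi}\kappa_\phi\,d\tau_\phi$ from \emph{both} sides along the $A_2$-locus requires an actual computation with the degenerate metric (the adapted coordinates with $\partial/\partial v$ null do not exist uniformly up to the $A_3$-points, where the null direction becomes tangent to $\Sigma_\phi$, so the estimate cannot be purely local-at-$A_2$); (ii) at the $A_3$-points one must simultaneously prove integrability of $\kappa_\phi\,d\tau_\phi$ (despite $\kappa_\phi\to-\infty$, cf.\ Theorem~\ref{thm:infty}) and identify the extra turning contributions of $\partial M^{+}_\varepsilon$ and $\partial M^{-}_\varepsilon$ as $\mp\pi$ with the correct sign attached to positive versus negative $A_3$-points, which is precisely what produces $S^+_\phi-S^-_\phi$ in \eqref{eq:B} and its absence in \eqref{eq:A}; none of this bookkeeping is carried out. (In \cite{SUY1} this is exactly the role of the interior-angle analysis at peaks.) A smaller inaccuracy: $K$ itself is in general unbounded near $\Sigma_\phi$; what is true, and what you in fact need, is that $K\,d\hat A_\phi=d\omega$ is a smooth $2$-form, hence $K\,dA_\phi=\pm\,d\omega$ is bounded relative to any smooth area form, which gives the integrability used on the left-hand sides. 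As it stands, then, the proposal is a correct outline of essentially the known proof, but with its decisive analytic steps missing.
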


\begin{remark}\label{rem:GB}
 Let $(\E,\inner{~}{~})$ be an oriented vector bundle of
 rank $2$ with a metric over a compact 
 oriented $2$-manifold $M^2$,
 and $D$ a metric connection.
 Suppose that there exists a bundle homomorphism
 $\phi:TM^2\to \E$.
 In the first section, $A_2$ and $A_3$-points 
 are defined for such an
 arbitrary bundle homomorphism $\phi$
 and 
 {\it \eqref{eq:B} holds without assuming the compatibility
 condition \eqref{eq:c}}, as pointed out in \cite{SUY5}:
 For an oriented orthonormal frame field $\{\vect{e}_1,\vect{e}_2\}$
 of $\E$ defined on $U\subset M^2$,
 there is a unique $1$-form $\omega$ on $U$ such that
\begin{equation}\label{eq:conn-form}
    D_X\vect{e}_1=-\omega(X)\vect{e}_2,\qquad 
    D_X\vect{e}_2=\omega(X)\vect{e}_1.
\end{equation}
 Then $d\omega$ does not depend on the choice of 
 $\{\vect{e}_1, \vect{e}_2\}$, and
 there is a $C^\infty$-function $K_{\phi,D}$
 on $M^2\setminus \Sigma_\phi$ such that
 \begin{equation}\label{eq:K}
    d\omega=K_{\phi,D}\, d\hat A_{\phi}.
 \end{equation}
 We call $K_{\phi,D}$ the {\it Gaussian curvature\/} of $D$ with respect
 to $\phi$. If $\E$ is a coherent tangent bundle, 
 the identity \eqref{eq:c} implies that $K_{\phi,D}$
 coincides with the Gaussian curvature of the metric
 $ds^2_\phi=\phi^*\inner{~}{~}$.
 Thus formulas \eqref{eq:B} and \eqref{eq:A}
 still hold for $K=K_{\phi,D}$,
 without assuming \eqref{eq:c}. 
 Although \eqref{eq:A} depends on $D$,
 \eqref{eq:B} is independent of the choice of metric connections.
\end{remark}

\begin{remark}\label{rem:peaks}
 Under the assumption that $\E$ is a coherent tangent bundle,
 the identities \eqref{eq:B} and \eqref{eq:A} hold even if $\phi$ admits
 wider class of singular points  called `peaks'.
 (cf.\ \cite[Definition 1.10]{SUY1} and also \cite[Definition 2.1]{SUY2}).
 $A_3$-points, lips (see Definition~\ref{def:lips}),
 beaks (see Definition~\ref{def:beaks}),
 butterflies (see Definition~\ref{def:b-fly}) are all examples of peaks.
 For a coherent tangent bundle over a compact oriented $2$-manifold
 whose singular points are at most peaks,
 the formulas \eqref{eq:B} and \eqref{eq:A} hold
 (see \cite[Theorem 2.3]{SUY1} and \cite[Theorem~B]{SUY2}),
 here $S^+_{\phi}$ and $S^-_{\phi}$ in \eqref{eq:B} should be 
 replaced by the numbers of positive (resp.\ negative) peaks
 (cf.\ \cite[Definition 2.1]{SUY2}).
\end{remark}

The formula \eqref{eq:B} in our situation
induces Quine's formula.
\begin{proposition}[Quine \cite{Q}]
\label{prop:quine}
 Let $M^2$ and $N^2$ both be compact oriented connected $2$-manifolds, 
 and $f:M^2\to N^2$ be a $C^\infty$-map whose singular set consists of
 folds and cusps.
 Then the topological degree of $f$ satisfies
 \[
   \deg(f)\chi(N^2)=\chi(M^+_f)-\chi(M^-_f) +S^+_f-S^-_f,
 \]
 where $M^+_f$ {\rm(}resp.\ $M^-_f${\rm)} is the set of
 regular points at which $f$ preserves {\rm(}resp. reverses{\rm)} 
 the  orientation,
 and $S^+_f$ {\rm(}resp. $S^-_f${\rm)} is the number of positive cusps
 {\rm(}resp.\ the number of negative cusps{\rm)}.
\end{proposition}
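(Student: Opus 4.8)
The plan is to apply the first Gauss-Bonnet formula \eqref{eq:B} to the coherent tangent bundle $(M^2,\E_f,\inner{~}{~},D,\phi_f)$ associated to $f$ as in Example~\ref{ex:map}, and then identify each term of the right-hand side with the corresponding object in Quine's formula. By hypothesis the singular set of $f$ consists of folds and cusps, which by the discussion in Section~\ref{sec:gauss-bonnet} are exactly the $A_2$-points and $A_3$-points of $\phi_f$; hence Fact~\ref{fact:suy} applies and gives
\[
  \chi_{\E_f}=\frac1{2\pi}\int_{M^2}K\,d\hat A_{\phi_f}
    =\chi(M^+_{\phi_f})-\chi(M^-_{\phi_f})+S^+_{\phi_f}-S^-_{\phi_f}.
\]
It remains to reinterpret the left-hand side and to match the sign conventions on the right-hand side.

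First I would identify $M^\pm_{\phi_f}$ with $M^\pm_f$. The $\phi_f$-Jacobian function $\lambda_{\phi_f}$ on a positively oriented chart is, by \eqref{eq:jacobian}, precisely the Jacobian determinant of $f$ measured against the volume form of $(N^2,g)$; thus $\lambda_{\phi_f}>0$ exactly where $f$ preserves orientation and $\lambda_{\phi_f}<0$ exactly where it reverses orientation, so $M^+_{\phi_f}=M^+_f$ and $M^-_{\phi_f}=M^-_f$ as sets (and as manifolds-with-boundary). Next I would check that the positive/negative $A_3$-points of $\phi_f$ coincide with the positive/negative cusps of $f$; this is a matter of comparing the co-orientation convention defining $S^\pm_\phi$ in \cite{SUY1} with the classical definition of positive and negative cusps of a map germ, and it is where one must be careful but where no real difficulty lies. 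Granting these identifications, the right-hand side of \eqref{eq:B} becomes exactly $\chi(M^+_f)-\chi(M^-_f)+S^+_f-S^-_f$.

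The main point — and the step I expect to be the only substantive one — is the evaluation of the left-hand side $\chi_{\E_f}$. Here $\E_f=f^*TN^2$ with the pulled-back metric and Levi-Civita connection, so the Euler number of this $\SO(2)$-bundle is $\langle f^*e(TN^2),[M^2]\rangle=\langle e(TN^2),f_*[M^2]\rangle=\deg(f)\,\langle e(TN^2),[N^2]\rangle=\deg(f)\,\chi(N^2)$, using naturality of the Euler class, the definition of topological degree, and the classical Gauss-Bonnet/Poincaré-Hopf identity $\langle e(TN^2),[N^2]\rangle=\chi(N^2)$ for the closed oriented surface $N^2$. (Equivalently, one may compute directly: pulling back a smooth metric on $N^2$, $\frac1{2\pi}\int_{M^2}K\,d\hat A_{\phi_f}=\frac1{2\pi}\int_{M^2}f^*(K_{N^2}\,dA_{N^2})=\deg(f)\,\frac1{2\pi}\int_{N^2}K_{N^2}\,dA_{N^2}=\deg(f)\chi(N^2)$, the middle equality being the change-of-variables formula for the degree.) Combining this with the identifications above yields $\deg(f)\chi(N^2)=\chi(M^+_f)-\chi(M^-_f)+S^+_f-S^-_f$, which is Quine's formula.
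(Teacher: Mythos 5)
Your proposal is correct and follows essentially the same route as the paper, which derives Quine's formula by applying \eqref{eq:B} to the coherent tangent bundle $(M^2,\E_f,\inner{~}{~},D,\phi_f)$ of Example~\ref{ex:map} and identifying $\chi_{\E_f}=\deg(f)\chi(N^2)$ via $\E_f=f^*TN^2$; the paper leaves these identifications implicit, and your write-up simply makes them explicit (including the standard matching of $A_2$/$A_3$-points of $\phi_f$ with folds/cusps via \cite{SUY3}).
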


On the other hand, the formula \eqref{eq:A} induces the following new
formula.
\begin{proposition}\label{prop:id}
 Let $(N^2,g)$ be an oriented Riemannian  $2$-manifold, and
 $M^2$ a compact oriented $2$-manifold.
 Let  $f:M^2\to N^2$ be a $C^\infty$-map whose singular set
 consists of folds and cusps.
 Then the total singular curvature $\int_{\Sigma} \kappa\, d\tau$
 with respect to the length element $d\tau$  
 {\rm(}with respect to $g${\rm)} on the singular set $\Sigma$
 is bounded, and satisfies the following identity 
 \[
     2\pi\chi(M^2)=
        \int_{M^2}(\widetilde K\circ f) \,|f^*dA_g|+
         2\int_{\Sigma} \kappa\, d\tau, 
 \]
 where $\widetilde{K}$ is the Gaussian curvature function on $(N^2,g)$,
 and  $|f^*dA_g|$ is the pull-back of the Riemannian measure of $(N^2,g)$.
\end{proposition}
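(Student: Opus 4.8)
The plan is to realize $f$ as the coherent tangent bundle $(M^2,\E_f,\inner{~}{~},D,\phi_f=df)$ of Example~\ref{ex:map}, to apply the intrinsic Gauss--Bonnet formula \eqref{eq:A} of Fact~\ref{fact:suy} to it, and then to translate the intrinsic quantities appearing on the right-hand side into the Riemannian data of $(N^2,g)$. By hypothesis the singular set $\Sigma_{\phi_f}=\Sigma$ consists of $A_2$-points (folds) and $A_3$-points (cusps), so Fact~\ref{fact:suy} applies; in particular its statement already includes that the total singular curvature $\int_{\Sigma_{\phi_f}}\kappa_{\phi_f}\,d\tau_{\phi_f}$ is finite, which gives the boundedness assertion. (By Theorem~\ref{thm:infty} the singular curvature diverges to $-\infty$ as one approaches a cusp, while the length element $d\tau_{\phi_f}$ simultaneously degenerates there, the product remaining integrable.)

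The translation rests on the single observation that $ds^2_{\phi_f}=\phi_f^*\inner{~}{~}=f^*g$, since the metric on $\E_f=f^*TN^2$ is induced from $g$ and $\phi_f=df$. From this: (i) at a $\phi_f$-regular point $p$ the map $f$ is a local diffeomorphism and $f\colon(U,f^*g)\to(f(U),g)$ is an isometry, so the Gaussian curvature $K$ of $ds^2_{\phi_f}$ equals $\widetilde K\circ f$ on $M^2\setminus\Sigma$; (ii) taking the co-orientation $\mu$ of $\E_f$ to be $f^*dA_g$ --- a globally defined section of $\det(\E_f^*)$ with unit length on orthonormal frames, available since $N^2$ is oriented --- one gets $\lambda_{\phi_f}\,du_1\wedge du_2=f^*dA_g$, hence $dA_{\phi_f}=|\lambda_{\phi_f}|\,du_1\wedge du_2=|f^*dA_g|$; (iii) the length element $d\tau_{\phi_f}$ on $\Sigma$ determined by $ds^2_{\phi_f}=f^*g$ is exactly the length element $d\tau$ with respect to $g$ in the statement (i.e.\ the arc-length of $f\circ\gamma$); (iv) the $\phi_f$-singular curvature is, by definition, the singular curvature $\kappa=\kappa_f$ at a fold, which by Proposition~\ref{prop:geod} is also the geodesic curvature of $f\circ\gamma$.

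Substituting (i)--(iv) into \eqref{eq:A} gives
\[
   2\pi\chi(M^2)=\int_{M^2}K\,dA_{\phi_f}+2\int_{\Sigma_{\phi_f}}\kappa_{\phi_f}\,d\tau_{\phi_f}
   =\int_{M^2}(\widetilde K\circ f)\,|f^*dA_g|+2\int_{\Sigma}\kappa\,d\tau,
\]
which is the asserted identity.

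I expect the only place requiring genuine care to be the bookkeeping in (ii) and (iii): verifying that the abstract co-orientation can be chosen globally and that $dA_{\phi_f}$ and $d\tau_{\phi_f}$ match $|f^*dA_g|$ and $d\tau$ exactly, not merely up to sign. Everything else is either immediate from the fact that $f$ is a local isometry for the metric $f^*g$ (which gives (i) and (iv)) or already contained in Fact~\ref{fact:suy} (the Gauss--Bonnet identity itself, together with the finiteness of the total singular curvature in the presence of $A_3$-points).
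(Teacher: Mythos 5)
Your proposal is correct and follows exactly the route the paper intends: the paper presents Proposition~\ref{prop:id} as a direct consequence of formula \eqref{eq:A} applied to the coherent tangent bundle $(M^2,\E_f,\inner{~}{~},D,\phi_f=df)$ of Example~\ref{ex:map}, with the same dictionary you spell out ($ds^2_{\phi_f}=f^*g$, hence $K=\widetilde K\circ f$ off $\Sigma$, $dA_{\phi_f}=|f^*dA_g|$, $d\tau_{\phi_f}=d\tau$, and $\kappa_{\phi_f}=\kappa$ via Proposition~\ref{prop:geod}), and with the finiteness of $\int_\Sigma\kappa\,d\tau$ coming from Fact~\ref{fact:suy}. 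Your added bookkeeping of the co-orientation and of the sign conventions is a faithful elaboration of what the paper leaves implicit.
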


In particular, if $(N^2,g)$ is the Euclidean
plane $(\R^2,g_0)$, we have the following
classical result: 
\begin{corollary}[Levine \cite{L}]\label{cor:id} 
 Let $M^2$ be a compact oriented $2$-manifold, and
 $f:M^2\to \R^2$ a $C^\infty$-map whose singular set
 consists of folds and cusps.
 Let $C_1,\dots,C_r$ be the disjoint union of the closed regular curves on
 $M^2$ such that the singular set of
 $f$ is $C_1\cup\dots \cup C_r$.
 Suppose that we give an orientation to each $f(C_j)$ 
 {\rm(}$j=1,\dots,r${\rm)}
 so that the image $f(M^2)$ lies on the left-hand  side of $f(C_j)$.
 Then the rotation indices $I(C_j)$ 
 {\rm(}which takes values in the set of half-integers{\rm)} of
 $f(C_j)$ {\rm(}$j=1,\dots,r${\rm)} as fronts satisfy
 \[
    \frac{\chi(M^2)}2=I(C_1)+\ldots +I(C_r).
 \]
\end{corollary}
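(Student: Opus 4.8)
The plan is to deduce Levine's formula from Proposition~\ref{prop:id} by specializing to the flat plane and then recognizing the total singular curvature as a sum of rotation indices. First I would apply Proposition~\ref{prop:id} with $(N^2,g)=(\R^2,g_0)$. Since the Gaussian curvature $\widetilde K$ of the Euclidean plane vanishes identically, the first term on the right-hand side drops out and the formula becomes
\begin{equation*}
  \pi\,\chi(M^2)=\int_{\Sigma}\kappa\,d\tau ,
\end{equation*}
where $\Sigma=C_1\cup\dots\cup C_r$ and $d\tau$ is the length element induced by $g_0$. Hence it suffices to prove that $\int_{C_j}\kappa\,d\tau=2\pi\,I(C_j)$ for each $j=1,\dots,r$.

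Next I would interpret the integrand geometrically. Because the singular set of $f$ consists only of folds and cusps, the restriction $f|_{C_j}$ is a closed wave front in $\R^2$: it is an immersed arc away from the (finitely many) cusp points of $f$, and in a neighbourhood of a cusp point it is right-left equivalent to the ordinary $3/2$-cusp $t\mapsto(t^2,t^3)$, whose tangent line extends continuously across the singular value. Orienting $C_j$ by a regular parametrization $\gamma_j$ so that $f(M^2)$ lies on the left-hand side of $f\circ\gamma_j$, Proposition~\ref{prop:geod} identifies the $\phi_f$-singular curvature $\kappa$ along $C_j$ with the geodesic curvature $\kappa_g$ of the plane curve $f\circ\gamma_j$, while $d\tau$ becomes the Euclidean arc-length element $ds$ of $f\circ\gamma_j$. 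Thus $\int_{C_j}\kappa\,d\tau=\oint \kappa_g\,ds$, the total curvature of the closed front $f(C_j)$.

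It then remains to show $\frac1{2\pi}\oint \kappa_g\,ds=I(C_j)$; that is, the total curvature of a closed plane front equals $2\pi$ times its rotation index. Let $\tilde\ell$ be a continuous real-valued lift, along $f\circ\gamma_j$, of the direction of the tangent line regarded as a map into the real projective line; such a lift exists precisely because the front has a continuous tangent line even at its cusps, and by definition $2\pi\,I(C_j)$ is the total increment of $\tilde\ell$ around $C_j$, which is an integer multiple of $\pi$ (so $I(C_j)$ is a half-integer). On each regular subarc one has $d\tilde\ell=\kappa_g\,ds$; at a cusp, using the normal form $(t^2,t^3)$, a direct computation gives $\kappa_g\,ds=\frac{6}{4+9t^2}\,dt$, which is bounded across $t=0$ and whose integral there equals exactly the corresponding increment $\tilde\ell(\epsilon)-\tilde\ell(-\epsilon)$ of the tangent-line angle. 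Summing these contributions over the regular arcs and the cusps gives $\oint \kappa_g\,ds=2\pi\,I(C_j)$. Combining this with the first step, $\pi\,\chi(M^2)=\sum_{j=1}^r\oint \kappa_g\,ds=2\pi\sum_{j=1}^r I(C_j)$, which is the asserted identity $\chi(M^2)/2=I(C_1)+\dots+I(C_r)$.

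The main obstacle is the cusp analysis in the last step: one must verify that the (generally unbounded) geodesic curvature of $f\circ\gamma_j$ is integrable across each ordinary cusp and that its integral there accounts exactly for the half-turn of the tangent line — equivalently, that the a priori bound on the total singular curvature coming from Proposition~\ref{prop:id} is realized by the turning-tangent count for fronts. Care is also needed in the sign and orientation bookkeeping (the ``left-hand side'' convention in Proposition~\ref{prop:geod}, the sign of $\kappa_g$, and the half-integer normalization of $I(C_j)$), so that the three ingredients match up; once these are settled, the remaining computations are routine.
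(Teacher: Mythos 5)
Your argument is correct and follows exactly the route the paper intends: Corollary~\ref{cor:id} is stated as the flat-plane specialization of Proposition~\ref{prop:id}, with the total singular curvature identified (via Proposition~\ref{prop:geod}) as the total geodesic curvature of the front curves $f(C_j)$ and hence as $2\pi\sum_j I(C_j)$. Your extra verification that $\kappa_g\,ds$ stays bounded across an ordinary cusp and accounts for the turning of the tangent line is just the standard detail the paper leaves implicit, so there is nothing to add.
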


\subsection{The  Gauss-Bonnet formulas for wave fronts}
We use the same notation as in the previous sections.
Let $N^3(c)$ be a $3$-dimensional complete Riemannian manifold with
constant curvature $c$.
Here, we do not assume that $N^3(c)$ is simply connected,
and we use the notation
$\widetilde N^3(c)$ when it is in fact
assumed to be  simply connected, like in
the previous section.

Let $M^2$ be an oriented closed $2$-manifold and
$f:M^2\to N^3(c)$ a front.
Then the front bundle
$(M^2,\E_f,\inner{~}{~},D,\phi=\phi_f,\psi=\psi_f)$ is defined as in 
Example~\ref{ex:front-bundle}.
We fix a positively oriented local coordinate system $(U;u,v)$ on
$M^2$.
Denote by $\lambda$ (resp.\ $\lambda_\#$) the $\phi$-Jacobian 
function (resp.\ $\psi$-Jacobian function)
as in \eqref{eq:jacobian}:
\begin{equation}\label{eq:2-lambda}
     \lambda:=\lambda_{\phi}=\mu(\phi_u,\phi_v)
     \qquad\text{and}\qquad
     \lambda_\#:=\lambda_{\psi}=\mu(\psi_u,\psi_v),
\end{equation}
where 
\[
   \phi_u=\phi(\partial /\partial u),\quad
   \phi_v=\phi(\partial /\partial v),\quad
   \psi_u=\psi(\partial /\partial u),\quad
   \psi_v=\psi(\partial /\partial v),
\]
and $\mu$ is the volume form of $\E_f$ giving 
co-orientation of $\E_f$.
Then the $\phi$-singular set (resp.\ $\psi$-singular set)
\[
     \Sigma:=\Sigma_{\phi}\qquad
     (\text{resp.\ }\Sigma_\#:=\Sigma_{\psi})
\]
is expressed as
\[
     \Sigma\cap U = \{p\in U\,;\,\lambda(p)=0\}\qquad
     (\text{resp.\ }
     \Sigma_\#\cap U = \{p\in U\,;\,\lambda_\#(p)=0\}).
\]

\begin{remark}\label{rmk:2}
If $N^3(c)=\R^3$, $\Sigma_\#$ is the singular set of the
 Gauss map $\nu:M^2\to S^2$ of $f$.
 When $N^3(c)=S^3$,
 $\Sigma_\#$ is the singular set of the map induced by the unit normal
 vector field  $\nu:M^2\to S^3$,
 where $S^3$ is the unit $3$-sphere.
 This unit normal vector corresponds to the dual map of $f$
 induced by a duality between $S^3$ and itself.
 When $N^3(c)$ is the hyperbolic $3$-space $H^3$, 
 $\Sigma_\#$ is the singular set of the map induced by 
 the normal vector field $\nu\colon{}M^2\to S^3_1$
 into the de Sitter $3$-space $S^3_1$.
\end{remark}

We write the volume forms as in \eqref{eq:volume-form} as
\begin{equation}\label{eq:2-volume-forms}
 \begin{array}{llll}
   d\hat{A}   &:=d\hat{A}_\phi=\lambda\, du\wedge dv,\qquad
  &d\hat{A}_\#&:=d\hat{A}_\psi=\lambda_\#\, du\wedge dv, \\
   dA         &:=dA_\phi=|\lambda|\, du\wedge dv,\qquad
  &dA_\#      &:=dA_\psi=|\lambda_\# |\, du\wedge dv,
 \end{array}
\end{equation}
and
\begin{align*}
   M^+   &:= M_{\phi}^+=
        \{p\in M^2\setminus \Sigma\,;\, d\hat{A}=dA \},\\
   M^-   &:= M_{\phi}^-=
        \{p\in M^2\setminus \Sigma\, ;\, d\hat{A}=-dA \}, \\
   M^+_\#& := M_{\psi}^+=
        \{p\in M^2\setminus \Sigma_\#\, ;\, d\hat{A}_\#=dA_\#\},\\
   M^-_\#& := M_{\psi}^-=
        \{p\in M^2\setminus \Sigma_\#\, ;\, d\hat{A}_\#=-dA_\# \}.
\end{align*}
Denote by $K$ (resp.\ $K_\#$) the Gaussian curvature 
of the metric $\first=ds^2_{\phi}$ 
(resp.\ $\third=ds^2_{\psi}$), which is defined on 
$M^2\setminus\Sigma$ (resp. $M^2\setminus\Sigma_\#$).

The extrinsic curvature $K^{\ext}$ in
Definition~\ref{def:ext} can be considered as a 
$C^{\infty}$-function on $M^2\setminus\Sigma$, because we are working
with the $2$-dimensional case.
Similarly, we denote by $K^{\ext}_{\#}$ the extrinsic
curvature with respect to the second homomorphism $\psi$,
namely, it is obtained by replacing $\first$ with $\third$ 
in \eqref{eq:ext}.
If we denote by $\widehat\first$, $\widehat\second$ and
$\widehat\third$ the representation matrices
of $\first$, $\second$ and $\third$, respectively,
with respect to $\{\partial/\partial u,\partial/\partial v\}$:
\begin{equation}\label{eq:fundamental-matrices}
  \begin{aligned}
  \widehat\first&:=
   \begin{pmatrix}
    \inner{\phi_u}{\phi_u} & \inner{\phi_u}{\phi_v}\\
    \inner{\phi_v}{\phi_u} & \inner{\phi_v}{\phi_v}
   \end{pmatrix},\\
  \widehat\second
   &:=
   -\begin{pmatrix}
    \inner{\phi_u}{\psi_u} & \inner{\phi_u}{\psi_v}\\
    \inner{\phi_v}{\psi_u} & \inner{\phi_v}{\psi_v}
   \end{pmatrix},\\
  \widehat\third
   &:=
   \begin{pmatrix}
    \inner{\psi_u}{\psi_u} & \inner{\psi_u}{\psi_v}\\
    \inner{\psi_v}{\psi_u} & \inner{\psi_v}{\psi_v}
   \end{pmatrix},
  \end{aligned}
\end{equation}
then the extrinsic curvatures are expressed as
\begin{equation}\label{eq:k-ext}
     K^{\ext}=\frac{\det\widehat\second}{\det\widehat\first},\qquad
     K_{\#}^{\ext}=\frac{\det\widehat\second}{\det\widehat\third}.
\end{equation}
Then \eqref{eq:ext2} is equivalent to 
\begin{equation}\label{eq:2-gauss}
  K=c+K^{\ext}.
\end{equation}

\begin{lemma}\label{lem:unbdd}
  Let $f:M^2\to N^3(c)$ be a front.
  Then
 \begingroup
 \renewcommand{\theenumi}{{\rm(\alph{enumi})}}
 \renewcommand{\labelenumi}{{\rm(\alph{enumi})}}
  \begin{enum}
   \item\label{item:k:1} 
     $K\,d\hat{A}=K_\#\,d\hat{A}_\#$,
   \item\label{item:k:1b} 
     $K_\#=1$ if $c=0$,
   \item\label{item:k:2} 
    $K^{\ext} \,d\hat{A}=d\hat{A}_\#$ and 
    $K^{\ext}_\# \,d\hat{A}_\#=d\hat{A}$,
   \item\label{item:k:2b} 
    $|K^{\ext}| \,dA=dA_\#$ and 
    $|K^{\ext}_\#| \,d{A}_\#=d{A}$,
   \item\label{item:k:3}
    $K^{\ext}\,K^{\ext}_\#=1$.
  \end{enum}
 \endgroup
\end{lemma}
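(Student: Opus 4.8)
The plan is to work entirely in a positively oriented local coordinate system $(U;u,v)$ on $M^2$, express everything in terms of the matrices $\widehat\first$, $\widehat\second$, $\widehat\third$ of \eqref{eq:fundamental-matrices} and the Jacobian functions $\lambda$, $\lambda_\#$ of \eqref{eq:2-lambda}, and then deduce each of (a)--(e) from \eqref{eq:k-ext}, \eqref{eq:2-gauss}, and the Gauss equation for fronts. First I would record the pointwise identity $\det\widehat\second = \lambda\lambda_\#\,(\text{sign factor})$; more precisely, since $\second(X,Y)=-\inner{\phi(X)}{\psi(Y)}$ and $\mu$ is the unit volume form, one has $\det\widehat\second = -\det\bigl(\inner{\phi_i}{\psi_j}\bigr) = \pm\mu(\phi_u,\phi_v)\,\mu(\psi_u,\psi_v) = \pm\lambda\lambda_\#$ by expanding the Gram-type determinant in a positive orthonormal frame of $\E_f$; one should check the sign is $+$ with the stated orientation conventions. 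Granting this, $K^{\ext}=\det\widehat\second/\det\widehat\first = \lambda\lambda_\#/\lambda^2 \cdot(\det\widehat\first/\lambda^2)^{-1}$—better: $\det\widehat\first=\lambda^2$ and $\det\widehat\third=\lambda_\#^2$ (Gram determinants), so $K^{\ext}=\lambda\lambda_\#/\lambda^2=\lambda_\#/\lambda$ and $K^{\ext}_\#=\lambda\lambda_\#/\lambda_\#^2=\lambda/\lambda_\#$.

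From $K^{\ext}=\lambda_\#/\lambda$ and $K^{\ext}_\#=\lambda/\lambda_\#$, claim (e) is immediate: $K^{\ext}K^{\ext}_\#=1$. For (c): $d\hat A_\#=\lambda_\#\,du\wedge dv=(\lambda_\#/\lambda)\,\lambda\,du\wedge dv=K^{\ext}\,d\hat A$, and symmetrically $K^{\ext}_\#\,d\hat A_\#=(\lambda/\lambda_\#)\lambda_\#\,du\wedge dv=\lambda\,du\wedge dv=d\hat A$. Claim (d) follows from (c) by taking absolute values of the coefficient functions, since $dA=|\lambda|\,du\wedge dv$ and $dA_\#=|\lambda_\#|\,du\wedge dv$. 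For (a): by \eqref{eq:2-gauss} and its analogue for $\psi$, $K=c+K^{\ext}$ and $K_\#=c+K^{\ext}_\#$; then $K\,d\hat A=(c+K^{\ext})d\hat A=c\,d\hat A+d\hat A_\#$ using (c), while $K_\#\,d\hat A_\#=(c+K^{\ext}_\#)d\hat A_\#=c\,d\hat A_\#+d\hat A$ again by (c). So (a) reduces to showing $c\,d\hat A+d\hat A_\#=c\,d\hat A_\#+d\hat A$, i.e. $c(d\hat A-d\hat A_\#)=d\hat A-d\hat A_\#$; this is not an identity unless $c$ is special, so I must instead argue directly: $K\,d\hat A$ is the curvature $2$-form $d\omega$ of the metric connection $D$ computed via $\phi$ (Remark~\ref{rem:GB}, equation \eqref{eq:K}), and $K_\#\,d\hat A_\#$ is the same $d\omega$ computed via $\psi$; since $D$ and its connection $1$-form $\omega$ (and hence $d\omega$) do not depend on whether we pair against $\phi$ or $\psi$, both equal $d\omega$, proving (a). Finally (b): when $c=0$, the ambient is flat, so for the front bundle coming from $f:M^2\to\R^3$ the second homomorphism is $\psi=d\nu$ with $\nu$ the Gauss map into the unit sphere; the metric $ds^2_\psi=\third$ is the pullback of the round metric on $S^2$ by $\nu$, whose Gaussian curvature is identically $1$, so $K_\#\equiv 1$. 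Alternatively, from (a) with $c=0$ we get $K_\#\,d\hat A_\#=K\,d\hat A=K^{\ext}\,d\hat A=d\hat A_\#$ using (c) and $K=K^{\ext}$, hence $K_\#=1$ wherever $d\hat A_\#\ne0$, and by continuity everywhere on $M^2\setminus\Sigma_\#$.

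The main obstacle I anticipate is pinning down the sign in the key determinant identity $\det\widehat\second=\lambda\lambda_\#$ (rather than $-\lambda\lambda_\#$) consistently with the co-orientation $\mu$ and the compatibility condition \eqref{eq:compati}—in particular verifying that the symmetry $\inner{\phi(X)}{\psi(Y)}=\inner{\phi(Y)}{\psi(X)}$ makes $\widehat\second$ genuinely symmetric so that its determinant behaves as claimed, and that no orientation-reversing factor sneaks in when passing to a positive orthonormal frame of $\E_f$. Once that bookkeeping is done, everything else is a short algebraic chain. I would also take care to phrase (a)--(e) as identities of globally defined forms/functions, noting that (a), (c) hold as smooth forms on all of $M^2$ while (b), (d), (e) are identities on the appropriate complements of the singular sets, extended by continuity where needed.
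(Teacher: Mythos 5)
Your proposal is correct and follows essentially the same route as the paper: the paper also writes $\phi$ and $\psi$ in a positive orthonormal frame of $\E_f$ (via matrices $G$, $G_\#$ with $\det G=\lambda$, $\det G_\#=\lambda_\#$) to get $K^{\ext}=\lambda_\#/\lambda$, $K^{\ext}_\#=\lambda/\lambda_\#$ and hence (c)--(e), proves (a) by observing that $K\,d\hat A$ and $K_\#\,d\hat A_\#$ both equal the curvature form $d\omega$ of the single connection $D$ (your corrected argument, after you rightly abandoned the attempt via $K=c+K^{\ext}$), and proves (b) by noting $\third$ is the pullback of the round metric under the Gauss map. The sign issue you flag as the main obstacle is in fact immediate: $\widehat\second=-\trans{G}G_\#$ gives $\det\widehat\second=\det G\,\det G_\#=\lambda\lambda_\#$ in rank $2$, with the orientation convention only needed to identify $\det G=\lambda$ and $\det G_\#=\lambda_\#$.
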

\begin{proof}
 Take a positive
 (local) orthonormal frame field $\{\vect{e}_1,\vect{e}_2\}$
 of $\E_f$, and take a one-form $\omega$ as in
 \eqref{eq:conn-form},
 that is, $\omega$ is the connection form of $D$ with respect to 
 the frame $\{\vect{e}_1,\vect{e}_2\}$.
 By \eqref{eq:K},
 $K\,d\hat{A}$ is determined 
 by just the connection $D$.
 Hence we have \ref{item:k:1}.
 Take $2\times 2$-matrix-valued functions $G$ and $G_\#$ as
 \begin{equation}\label{eq:matrix-G}
     (\phi_u,\phi_v)=G(\vect{e}_1,\vect{e}_2),\qquad
     (\psi_u,\psi_v)=G_\#(\vect{e}_1,\vect{e}_2).
 \end{equation}
 By definition, $d\hat A$ and $d\hat A_\#$ in 
 \eqref{eq:2-volume-forms} are expressed as 
 \begin{equation}\label{eq:dA-G}
  \begin{aligned}
       d\hat A &= \lambda\,du\wedge dv=(\det G) \,du\wedge dv,\\
      d\hat A_\# &=\lambda_\#\,du\wedge dv= (\det G_\#) \,du\wedge dv.
  \end{aligned}
 \end{equation}
 In particular, we have that
 \begin{equation}\label{eq:dA-G0}
       dA =|\det G| \,du\wedge dv,\qquad
      dA_\# =|\det G_\#| \,du\wedge dv.
 \end{equation}
 On the other hand,
 the matrices $\widehat\first$, $\widehat\second$ and
 $\widehat\third$ as in \eqref{eq:fundamental-matrices}
 are written as
 \[
     \widehat\first = \trans{G}G,\qquad
     \widehat\second = -\trans{G}G_\#=-\trans{G_\#}G,\qquad
     \widehat\third = \trans{G_\#}G_\#,
 \]
 where $\trans(~)$ denotes  transposition.
 Thus, by \eqref{eq:k-ext}, we have
 \begin{equation}\label{eq:K-G}
      K^{\ext} = \frac{\det G_\#}{\det G\hphantom{_\#}},\quad
      K_\#^{\ext} = \frac{\det G\hphantom{_\#}}{\det G_\#}.
 \end{equation} 
 Hence \ref{item:k:2}, \ref{item:k:2b} and \ref{item:k:3} hold.
 Finally, if $c=0$, $\third$ is just the pull-back
 metric of canonical metric of the unit sphere by the
 Gauss map, so \ref{item:k:1b} follows.
\end{proof}

Now, we assume both of the singular sets $\Sigma$ and $\Sigma_\#$
consist of $A_2$- points and $A_3$-points.
Then applying two abstract formulas \eqref{eq:B} and
\eqref{eq:A} for $\phi$ and $\psi$ respectively,  
we have the following four Gauss-Bonnet formulas:
\begin{align}
 \int_{M^2} K\, d\hat{A}
 &=
   \int_{M^+}K\, dA-\int_{M^-}K\, dA  \label{eq:1p}\\
 &=
   2\pi\big(\chi(M^+)-\chi(M^-)\big)
            +2\pi \big(S^+-S^-\big), 
 \nonumber
 \\
 \int_{M^2} K\, dA
 &=2\pi\chi(M)-2\int_{\Sigma}\kappa\,d\tau,   
 \label{eq:1m}\\
 \int_{M^2} K_\#\, d\hat{A}_\#
 &=
    \int_{M_\#^+}K_\#\, dA_\#-\int_{M_\#^-}K_\#\, dA_\#
 \label{eq:2p}\\
 &= 2\pi\big(\chi(M_\#^+)-\chi(M_\#^-)\big)
        +2\pi \big(S_\#^+-S_\#^-\big), 
      \nonumber \\
  \int_{M^2} K_\#\, dA_\#&=2\pi\chi(M^2)-
       2\int_{\Sigma_\#}\kappa_\#\, d\tau_\#,
  \label{eq:2m}
\end{align}
where $\kappa$ (resp.\ $\kappa_\#$) is the singular curvature
function along $A_2$-points in $\Sigma$ (resp.\ $\Sigma_\#$) 
as in \eqref{eq:singular-curvature-2},
$d\tau$ (resp.\ $d\tau_\#$) is the length element
on the singular curve
with respect to $\first$ (resp.\ $\third$),
$S^+$ (resp.\ $S^-$) is the number of positive (resp.\ negative)
$A_3$-points of $\phi$,
and $S_\#^+$ (resp.\ $S_\#^-$) is the number of positive 
(resp.\ negative) $A_3$-points of $\psi$.
\begin{remark}\label{rem:peak2}
 As seen in Remark~\ref{rem:peaks}, 
 formulas \eqref{eq:1p}--\eqref{eq:2m} hold for fronts such that
 the singular sets of $\phi$ and $\psi$ consist of at most peak.
 Here, $S^+$ (resp.\ $S^-$) in \eqref{eq:1p} should 
 be considered as a number of positive  (resp.\ negative) 
 peaks of $\phi$, and
 $S^+_\#$ (resp.\ $S^-_\#$) in \eqref{eq:2p} should 
 be considered as a number of positive  (resp.\ negative) 
 peaks of $\psi$,
 in this case.
\end{remark}

\subsection{Applications of the four Gauss-Bonnet formulas}
Let $f:M^2\to N^3(c)$ be an immersion of a compact orientable $2$-manifold
$M^2$.
Then the second homomorphism $\psi_f$ is the shape operator of $f$,
and the set of singular points $\Sigma_\#$ of it
coincides with the 
{\it inflection points\/} of $f$
(see \cite{SUY4}).
Then $A_2$-points and $A_3$-points in $\Sigma_\#$ are called
{\it $A_2$-inflection points\/} and 
{\it $A_3$-inflection points}, respectively.
\begin{theorem}[A generalization of the Bleecker-Wilson formula]
\label{thm:a}
 Let $M^2$ be 
 a compact oriented $2$-manifold
 and $f:M^2\to N^3(c)$ an immersion.
 Suppose that the set of inflection points of $f$ consists of 
 $A_2$-points and $A_3$-points.
 Then we have
\begin{equation}\label{eq:BWnew}
       2\chi(M^-_\#)=S_\#^+-S_\#^-.
\end{equation}
 Moreover, $M^-_\#$ coincides with 
the set $\{p\in M^2\,;\,K^{\ext}(p)<0\}$.
\end{theorem}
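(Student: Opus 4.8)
The plan is to pit the two ``$d\hat A$-type'' Gauss--Bonnet formulas \eqref{eq:1p} and \eqref{eq:2p} against one another, using Lemma~\ref{lem:unbdd}\,\ref{item:k:1} to identify their left-hand sides, and then to feed in the elementary fact that $\Sigma_\#$ splits $M^2$ into two pieces whose Euler characteristics add up to $\chi(M^2)$.

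First I would note that, since $f$ is an immersion, $\phi=\phi_f=df$ is a bundle isomorphism, hence $\Sigma=\Sigma_\phi=\emptyset$; fixing the co-orientation of $\E_f$ so that $\lambda_\phi>0$ (equivalently $M^+=M^2$, $M^-=\emptyset$, and there are no $A_3$-points of $\phi$), formula \eqref{eq:1p} collapses to the classical Gauss--Bonnet identity $\int_{M^2}K\,d\hat A=2\pi\chi(M^2)$. By Lemma~\ref{lem:unbdd}\,\ref{item:k:1}, $K_\#\,d\hat A_\#=K\,d\hat A$, so the left-hand side of \eqref{eq:2p} is also $2\pi\chi(M^2)$, and therefore
\[
   \chi(M^2)=\chi(M^+_\#)-\chi(M^-_\#)+S^+_\#-S^-_\#.
\]

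Next I would prove the purely topological identity $\chi(M^2)=\chi(M^+_\#)+\chi(M^-_\#)$. Since the inflection set $\Sigma_\#$ consists of non-degenerate $\psi$-singular points (the $A_2$- and $A_3$-points), $d\lambda_\psi$ is nowhere zero along it, so $\Sigma_\#$ is a closed $1$-submanifold of $M^2$, i.e.\ a finite disjoint union of circles, with $\chi(\Sigma_\#)=0$; moreover $\lambda_\psi$ changes sign across it, so $\overline{M^+_\#}$ and $\overline{M^-_\#}$ are compact surfaces with common boundary $\Sigma_\#$ and union $M^2$, and each has the same Euler characteristic as the corresponding open region. Additivity of $\chi$ then gives $\chi(M^2)=\chi(M^+_\#)+\chi(M^-_\#)$, and subtracting this from the displayed identity yields $2\chi(M^-_\#)=S^+_\#-S^-_\#$, which is \eqref{eq:BWnew}.

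Finally, for the description of $M^-_\#$: Lemma~\ref{lem:unbdd}\,\ref{item:k:2} gives $K^{\ext}\,d\hat A=d\hat A_\#$, and with the chosen co-orientation $d\hat A=dA>0$, so $\sgn\lambda_\psi=\sgn K^{\ext}$ pointwise on $M^2$; hence $M^-_\#=\{\lambda_\psi<0\}=\{p\in M^2\,;\,K^{\ext}(p)<0\}$. The sign bookkeeping around the co-orientation is routine; the one point deserving care---and the main (if mild) obstacle---is verifying that $\Sigma_\#$ is a genuinely embedded closed curve even through the $A_3$-points, which is exactly what the non-degeneracy in the definition of $A_3$-points provides, so that the Euler-characteristic splitting is legitimate.
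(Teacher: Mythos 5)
Your proposal is correct and follows essentially the same route as the paper: apply the Gauss--Bonnet formula \eqref{eq:2p} for $\psi$, identify $\int_{M^2}K_\#\,d\hat A_\#$ with $\int_{M^2}K\,d\hat A=2\pi\chi(M^2)$ via Lemma~\ref{lem:unbdd}\,\ref{item:k:1} and $d\hat A=dA$ (immersion), combine with $\chi(M^2)=\chi(M_\#^+)+\chi(M_\#^-)$ coming from $\chi(\Sigma_\#)=0$, and use \ref{item:k:2}--\ref{item:k:2b} of the lemma to identify $M_\#^-$ with $\{K^{\ext}<0\}$. The only cosmetic difference is that you derive the classical Gauss--Bonnet identity by specializing \eqref{eq:1p}, whereas the paper simply invokes it directly.
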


\begin{proof}
 Since $A_2$-inflection points and $A_3$-inflection points are non-degenerate 
 singular points of $\psi$, the set of inflection points $\Sigma_\#$
 is a regular submanifold of a compact manifold $M^2$.
 Hence $\Sigma_\#$ is the disjoint union of a finite number of
 closed curves in $M^2$, 
 and thus the Euler number $\chi(\Sigma_\#)$ vanishes.
 Then we have
 \begin{equation}\label{eq:euler-plus-minus}
  \chi(M^2) = \chi(M_\#^+)+\chi(M_\#^-)+\chi(\Sigma_\#)
            = \chi(M_\#^+)+\chi(M_\#^-).
 \end{equation}
 Since $f$ is an immersion, $dA=d\hat A$ holds, and then
 \begin{alignat*}{2}
  \chi(M^2) &= \frac{1}{2\pi}\int_{M^2} K\,d\hat A \qquad 
            &&\text{(by the Gauss-Bonnet formula)}\\
            &= \frac{1}{2\pi}\int_{M^2} K_{\#}\,d\hat A_{\#}\qquad
            &&\text{(by \ref{item:k:1} in Lemma~\ref{lem:unbdd})}\\
            &= \chi(M_\#^+)-\chi(M_\#^-)+S_\#^+-S_\#^-\quad
            &&\text{(by \eqref{eq:2p})}\\
            &= \chi(M^2)-2\chi(M_\#^-)+S_\#^+-S_\#^-\quad
            &&\text{(by \eqref{eq:euler-plus-minus})}.
 \end{alignat*}
 Thus, we have the equality.
 Here, since $dA=d\hat A$, \ref{item:k:2} and \ref{item:k:2b}
 of Lemma~\ref{lem:unbdd}
 implies that $\Sigma_\#=\{p\in M^2\,;\,K^{\ext}=0\}$.
 Then 
 \begin{alignat*}{2}
    M_\#^- &=  \{p\in M^2\setminus\Sigma_\#\,;\,d\hat A_\#=-dA_\#\} \qquad
           &&  \\
           &=  \{p\in M^2\setminus\Sigma_\#\,;\,K^{\ext}\,d\hat A=-dA_\#\}\qquad
           &&  \text{(by \ref{item:k:2} in Lemma~\ref{lem:unbdd})}\\
           &=  \{p\in M^2\setminus\Sigma_\#\,;\,K^{\ext}\,dA=-dA_\#\} \qquad
           &&  \text{(since $d\hat A=dA$)}\\
           &=  \{p\in M^2\setminus\Sigma_\#\,;\,K^{\ext}<0\}
           &&  \text{(by \ref{item:k:2b} in Lemma~\ref{lem:unbdd})}\\
           &=  \{p\in M^2\,;\,K^{\ext}<0\}.
           &&
 \end{alignat*}
 Hence we have the conclusion.
\end{proof}

\begin{remark}\label{rmk:BW}
When $N^3(c)=\R^3$, \eqref{eq:BWnew} 
is the classical formula given in \cite{BW}.
In this case, $S_\#^++S_\#^-$ is equal to the total number of
cusps appears in the Gauss map $\nu$ of $f$.
Romero-Fuster \cite{R} discusses on this number for embedded
surfaces. 
When $N^3(c)=S^3$ or $H^3$, the formula has been also proved in the
authors' previous work \cite{SUY4}.
However, this formula is a generalization of 
those results in \cite{BW} and \cite{SUY4}, 
since $N^3(c)$ might not be simply connected in our setting.
It should be also remarked that the formula \eqref{eq:BWnew} 
can be generalized
to any compact immersed surfaces in
an arbitrary orientable Riemannian 3-manifold:
In fact, \eqref{eq:c} for $\psi_f$ is equivalent
to the Codazzi equation for the space form
and does not hold for a general Riemannian $3$-manifold.
However, \eqref{eq:2p} still
holds without assuming \eqref{eq:c},
as mentioned in Remark \ref{rem:GB}, and the above proof
works in this general setting (see \cite{SUY5} for details).
\end{remark}

Here, we give two examples satisfying $\chi(M^-)<0$.
\begin{example}
 It is well-known that there are embedded triply periodic minimal
 surfaces. 
 Although the Gauss maps of minimal surfaces only have isolated
 singular points, if we perturb the surface, 
 we get an immersion
 $f:M^2\to T^3$,
 where $M^2$ is a compact $2$-manifold with positive genus.
 Since minimal surfaces have negative Gaussian curvature,
 the perturbation
 $f$ satisfies $\chi(M^-)<0$, and the Gauss map of $f$
 must have cusps.  
\end{example}

\begin{example}\label{ex:trinoid}
 Similarly, considering the Jorge-Meeks symmetric trinoid 
 with the three ends rounding off to become closed discs, 
 one can get a closed surface
 in $\R^3$ as in Fig.~\ref{fig:trioid}.
 Then the resulting surface 
 satisfies $\chi(M^-)<0$, and its Gauss map 
 must have cusps.
\end{example}

\begin{figure}[htb]
 \begin{center}
  \vspace*{2mm}
        \includegraphics[width=3.3cm]{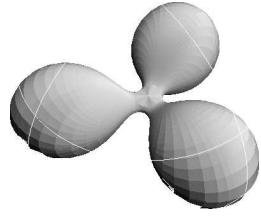}
  \vspace*{2mm}
  \caption{Trinoid with $\chi(M^-)<0$ in Example~\ref{ex:trinoid}.}
\label{fig:trioid}
\end{center}
\end{figure}

Interchanging the roles of $\phi$ and $\psi$,
we get the following dual assertion in $\R^3$.
Let $f\colon{}M^2\to \R^3$ be a front with unit normal vector
$\nu$.
Then for each real number $t$, 
$f_t:=f+t\nu$
is a front with unit normal $\nu$, which is called the 
{\em parallel front\/}
of signed distance $t$ of the front $f$.
\begin{theorem}[The dual version of Theorem~\ref{thm:a}]\label{thm:b}
 Let $f:S^2\to \R^3$ be a strictly convex surface and
 $f_t\,\,(t\in\R)$  a parallel front of $f$,
 Assume that the set of singular points of $f_t$ consists of
 cuspidal edges and swallowtails.
 Then
 $2\chi(M^-_{f_t})=S^+_{f_t}-S^-_{f_t}$
 holds,
 where $S^+_{f_t}$ {\rm(}resp. $S^-_{f_t}${\rm)} is the number of
 positive {\rm(}resp. negative{\rm)}
 swallowtails of $f_t$, and
 $M^{-}_{f_t}$ is the subset of 
 $M^2$ where the Gaussian curvature $K_t$ of $f_t$ is negative.
\end{theorem}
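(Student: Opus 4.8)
The plan is to transcribe the proof of Theorem~\ref{thm:a} with the roles of the first and second homomorphisms interchanged, the key input being that the second homomorphism of every parallel front $f_t$ is nondegenerate because $f$ is strictly convex. Concretely, the front bundle associated to $f_t$ has second homomorphism $\psi_{f_t}\colon X\mapsto\nabla_X\nu$, where $\nu$ is the outward unit normal (the Gauss map) of $f$, and this is independent of $t$. Since $f\colon S^2\to\R^3$ is strictly convex, $\nu\colon S^2\to S^2$ is an orientation-preserving diffeomorphism, so $\psi_{f_t}$ has no singular points and the $\psi$-Jacobian $\lambda_\#$ is everywhere positive. Hence $\third=ds^2_{\psi_{f_t}}$ is a genuine Riemannian metric on $S^2$ and $dA_\#=d\hat A_\#$; one also checks, routinely from \eqref{eq:front}, that each $f_t$ is itself a front.

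Next I would compute the total curvature on the $\psi$-side: by parts \ref{item:k:1} and \ref{item:k:1b} of Lemma~\ref{lem:unbdd} (so $K\,d\hat A=K_\#\,d\hat A_\#$ and $K_\#\equiv1$ since $c=0$), together with $d\hat A_\#=dA_\#$ and the classical Gauss--Bonnet theorem applied to the smooth metric $\third$ on $S^2$,
\[
  \int_{M^2}K\,d\hat A=\int_{M^2}K_\#\,d\hat A_\#
   =\int_{M^2}d\hat A_\#=2\pi\chi(S^2)=4\pi .
\]
On the $\phi$-side, by hypothesis the singular set $\Sigma=\Sigma_{\phi_{f_t}}$ of $f_t$ consists of cuspidal edges and swallowtails, i.e.\ of $A_2$- and $A_3$-points, hence is a disjoint union of circles in the compact surface $S^2$; thus $\chi(\Sigma)=0$ and $\chi(S^2)=\chi(M^+)+\chi(M^-)$. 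Substituting this and the displayed identity into \eqref{eq:1p} applied to $\phi=\phi_{f_t}$ gives
\[
  4\pi=2\pi\bigl(\chi(M^+)-\chi(M^-)\bigr)+2\pi(S^+-S^-)
      =2\pi\chi(S^2)-4\pi\chi(M^-)+2\pi(S^+-S^-),
\]
so that $2\chi(M^-)=S^+-S^-$, where $S^\pm=S^\pm_\phi$ is the number of positive/negative swallowtails of $f_t$, i.e.\ $S^\pm=S^\pm_{f_t}$.

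It remains to identify $M^-=M^-_\phi=\{\lambda<0\}$ with $\{p\in M^2\,;\,K_t(p)<0\}$. By part \ref{item:k:2} of Lemma~\ref{lem:unbdd}, $K^{\ext}\,d\hat A=d\hat A_\#$; since $d\hat A_\#$ is a positive multiple of $du\wedge dv$ while $d\hat A=\lambda\,du\wedge dv$, this forces $\sgn\lambda=\sgn K^{\ext}$ at each regular point, and as $c=0$ equation \eqref{eq:2-gauss} reads $K_t=K^{\ext}$. Therefore $M^-=\{K_t<0\}=M^-_{f_t}$ and $2\chi(M^-_{f_t})=S^+_{f_t}-S^-_{f_t}$. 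The only genuinely delicate point is the orientation bookkeeping in this last step — in particular the fact that the Gauss map of a strictly convex surface is an orientation-preserving diffeomorphism, which is what makes $\psi_{f_t}$ behave exactly like the first homomorphism of an immersion does in the proof of Theorem~\ref{thm:a}; once that is pinned down, the argument is a direct dualization.
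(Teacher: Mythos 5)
Your proposal is correct and takes essentially the same route as the paper: the paper's proof simply notes that strict convexity makes $\nu$ a diffeomorphism, so $\Sigma_\#=\emptyset$, and then exchanges the roles of $\phi$ and $\psi$ in the proof of Theorem~\ref{thm:a}. Your write-up just carries out that dualization explicitly (using Lemma~\ref{lem:unbdd}, \eqref{eq:1p}, and \eqref{eq:2-gauss} with $c=0$ to identify $M^-$ with $\{K_t<0\}$), which is exactly what the paper intends.
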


\begin{proof}
 Since $f$ is strictly convex, 
 $\nu\colon{}S^2\to S^2$ is a diffeomorphism,
 and then $\Sigma_\#$ vanishes.
 Then, exchanging the roles of $\phi$ and $\psi$, the proof of 
 Theorem~\ref{thm:a} implies the conclusion.
\end{proof}

\begin{example}\label{ex:ellipsoid}
 Let $f\colon S^2 \to\R^3$ be a parametrization of the
 ellipsoid defined by
 $(x^2/5)^2+(y/4)^2+z^2=1$
 and let $\nu$ be its unit normal vector field.
 Then we can observe that
 $f_c:=f+c\nu$ for $c=11/2$
 has four negative swallowtails
 and the Euler number of the set $\{K_c<0\}$
 is $-2$, where $K_c$ is the Gaussian curvature
 of $f_c$,
 see Fig.~\ref{fig:ellipsoid}.
\end{example}

\begin{figure}[htb]
 \begin{center}
        \includegraphics[height=4cm]{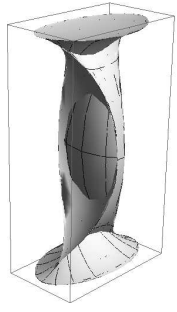}\hspace{1cm}
  \caption{A parallel surface of an ellipsoid with $\chi(M^-)<0$
         in Example~\ref{ex:ellipsoid}.}
  \label{fig:ellipsoid}
\end{center}
\end{figure}
On the other hand, as a conclusion of \eqref{eq:2m},
we have:
\begin{theorem}\label{thm:c}
 Let $f:M^2\to \R^3$ be an immersion and $\nu:M^2\to S^2$ the Gauss map
 of $f$.
 Assume that the singular set $\Sigma_\#$ of $\nu$ consists of 
 folds {\rm(}i.e. $A_2$-points{\rm)} and cusps 
 {\rm(}i.e. $A_3$-points{\rm)}.
 Then
 \[
    \int_{\Sigma_{\#}}\kappa_\#\,d\tau_{\#}
    =
      \int_{M_\#^-} K\,d\hat{A}
    =
      \int_{M^2}K^-\,dA
 \]
 holds, where $K^-=\min(0,K)$.
 In particular, the total dual singular curvature
 {\rm(}with respect to $\nu${\rm)}
 $\int_{\Sigma_{\#}}\kappa_\#\,d\tau_\#$ is non-positive.
\end{theorem}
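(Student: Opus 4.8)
The plan is to exploit that here $c=0$, which collapses Lemma~\ref{lem:unbdd} almost to triviality. Indeed, part~\ref{item:k:1b} gives $K_\#\equiv 1$, the relation \eqref{eq:2-gauss} gives $K=K^{\ext}$ on $M^2\setminus\Sigma$, and since $f$ is an immersion we have $\Sigma=\emptyset$ and $dA=d\hat A$ (as already used in the proof of Theorem~\ref{thm:a}). Combining $K=K^{\ext}$ with part~\ref{item:k:2} of Lemma~\ref{lem:unbdd} yields the pointwise identity of smooth $2$-forms $K\,d\hat A=d\hat A_\#$ on all of $M^2$, and Theorem~\ref{thm:a}, together with the computation carried out in its proof, identifies $\Sigma_\#=\{K^{\ext}=0\}=\{K=0\}$ and $M_\#^-=\{K<0\}$, hence $M_\#^+=\{K>0\}$. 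These are the facts I would carry through the rest of the argument.

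Granting them, the second equality is immediate: $K^-=\min(0,K)$ vanishes on $M_\#^+\cup\Sigma_\#$ and coincides with $K$ on $M_\#^-$, so $\int_{M^2}K^-\,dA=\int_{M_\#^-}K\,dA=\int_{M_\#^-}K\,d\hat A$ using $dA=d\hat A$. For the first equality I would play the fourth Gauss--Bonnet formula \eqref{eq:2m} against a direct bookkeeping of the $\psi$-volume. Since $K_\#\equiv 1$, \eqref{eq:2m} reads $\int_{M^2}dA_\#=2\pi\chi(M^2)-2\int_{\Sigma_\#}\kappa_\#\,d\tau_\#$. On the other hand, splitting the unsigned $\psi$-volume form over $M_\#^{\pm}$, on which $d\hat A_\#=\pm\,dA_\#$, gives $\int_{M^2}dA_\#=\int_{M^2}d\hat A_\#-2\int_{M_\#^-}d\hat A_\#$; and $\int_{M^2}d\hat A_\#=\int_{M^2}K\,d\hat A=2\pi\chi(M^2)$ by the identity $K\,d\hat A=d\hat A_\#$ together with the classical Gauss--Bonnet theorem (equivalently \eqref{eq:1m} with $\Sigma=\emptyset$). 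Comparing the two expressions for $\int_{M^2}dA_\#$ forces $\int_{\Sigma_\#}\kappa_\#\,d\tau_\#=\int_{M_\#^-}d\hat A_\#=\int_{M_\#^-}K\,d\hat A$, which is exactly the first equality; the asserted non-positivity then follows at once from $K^-\le 0$.

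The whole deduction is essentially bookkeeping with Lemma~\ref{lem:unbdd}, the Gauss--Bonnet formulas \eqref{eq:1m} and \eqref{eq:2m}, and Theorem~\ref{thm:a}; no genuinely new analysis enters, since convergence of the a priori possibly divergent integral $\int_{\Sigma_\#}\kappa_\#\,d\tau_\#$ is already guaranteed by \eqref{eq:2m}. The only place demanding care is the sign bookkeeping: correctly decomposing the unsigned form $dA_\#$ over $M_\#^+$ and $M_\#^-$, and invoking Theorem~\ref{thm:a} to pin down $M_\#^-=\{K<0\}$, so that the three quantities in the statement genuinely coincide rather than merely up to sign.
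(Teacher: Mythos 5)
Your argument is correct and is essentially the paper's own proof: both hinge on comparing \eqref{eq:2m} with the decomposition of the ($\psi$-)volume integral over $M_\#^{\pm}$, converting via Lemma~\ref{lem:unbdd} to $K\,d\hat A$, invoking the classical Gauss--Bonnet theorem (i.e.\ \eqref{eq:1m} with $\Sigma=\emptyset$), and identifying $M_\#^-=\{K<0\}$ from Theorem~\ref{thm:a}. The only cosmetic difference is that you use $K_\#\equiv 1$ and item \ref{item:k:2}, while the paper keeps $K_\#\,d\hat A_\#=K\,d\hat A$ via item \ref{item:k:1}; for $c=0$ these are the same identity.
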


\begin{proof}
 Since $f$ is an immersion into $\R^3$ (i.e. $c=0$), 
 $M^-_{\#}=\{p\in M^2\,;\,K^{\ext}<0\}=\{p\in M^2\,;\,K<0\}$
 holds.
 Then we have 
 \begin{equation}\label{eq:negint}
   \int_{M_\#^-} K\,dA = \int_{M^2} K^-\,dA.
 \end{equation}
 Here,
 \begin{alignat*}{2}
  \int_{M^2}K_{\#}\,dA_\#&=
     \int_{M_\#^+}K_\#\,d\hat A_\# -
     \int_{M_\#^-}K_\#\,d\hat A_\# 
     &&\\
   &=\int_{M_\#^+}K\,d\hat A -
     \int_{M_\#^-}K\,d\hat A \qquad
     &&\text{(by \ref{item:k:1} of Lemma~\ref{lem:unbdd})}\\
   &=\int_{M_\#^+}K\,dA -
     \int_{M_\#^-}K\,dA \qquad
     &&\text{(since $f$ is an immersion)}\\
   &=\int_{M^2}K\,dA - 2\int_{M_\#^-}K\,dA
     && \\
   &= 2\pi\chi(M^2) -2 \int_{M^2} K^-\,dA.
     && \text{(by \eqref{eq:negint})}.
 \end{alignat*}
 Thus, by \eqref{eq:2m}, we have the conclusion.
\end{proof}

\begin{example}\label{ex:sin}
 Consider a rotation of the sine curve
 \[
    f(u,v):=(u \cos v, u  \sin v, \cos u)
     \qquad (0< u<\pi,~0\le v\le 2\pi)
 \]
 whose Gauss map has $A_2$-singular points
 of positive singular curvature. 
 This shows that the singular
 curvature of Gauss maps  can take positive values. 
\end{example}

\begin{corollary}\label{cor:c1}
 Under the same assumptions as in
 Theorem~\ref{thm:c},
 if there exists a point $p$ satisfying
 $K(p)<0$ then there exists a fold point such that
 $\kappa_\#$ is negative.
\end{corollary}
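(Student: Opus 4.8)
The plan is to argue by contradiction, using the integral identity supplied by Theorem~\ref{thm:c}. First I would note that since $f$ is an immersion, its Gaussian curvature $K$ is a (smooth, hence continuous) function defined on all of $M^2$; thus the hypothesis $K(p)<0$ together with continuity yields an open neighborhood $V$ of $p$ on which $K<0$. Since $K^-=\min(0,K)\le 0$ everywhere and $K^-=K<0$ on $V$, this gives
\[
   \int_{M^2}K^-\,dA \;\le\; \int_{V}K^-\,dA \;=\;\int_V K\,dA\;<\;0 ,
\]
so the right-hand side of the identity in Theorem~\ref{thm:c} is strictly negative.

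Next I would apply Theorem~\ref{thm:c} directly to conclude
\[
   \int_{\Sigma_\#}\kappa_\#\,d\tau_\# \;=\; \int_{M^2}K^-\,dA \;<\;0 .
\]
By hypothesis $\Sigma_\#$ consists of folds ($A_2$-points) and cusps ($A_3$-points); since $A_3$-inflection points are non-degenerate singular points of $\psi$, they form a discrete (hence finite, by compactness) subset of each component of $\Sigma_\#$, so the fold part is $\Sigma_\#$ minus a finite set, of full measure with respect to $d\tau_\#$, and $\kappa_\#$ is defined precisely there. Therefore, if $\kappa_\#$ were $\ge 0$ at every fold point of $\Sigma_\#$, the integral $\int_{\Sigma_\#}\kappa_\#\,d\tau_\#$ would be $\ge 0$, contradicting the displayed inequality. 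Hence there must exist a fold point at which $\kappa_\#<0$, which is exactly the assertion.

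I do not expect a serious obstacle here; the one point needing a little care is the bookkeeping around the cusps, namely verifying that they contribute a set of $d\tau_\#$-measure zero so that negativity of the total integral genuinely forces $\kappa_\#<0$ on a subset of positive length inside the folds. (As a sanity check, one may also observe that by Theorem~\ref{thm:infty} the singular curvature $\kappa_\#$ already diverges to $-\infty$ along $\Sigma_\#$ as one approaches any cusp, so the conclusion is automatic whenever cusps are present, and the identity of Theorem~\ref{thm:c} is really only needed to dispose of the cusp-free case.)
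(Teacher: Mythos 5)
Your argument is correct and is exactly the intended deduction: the paper states this corollary without proof as an immediate consequence of the identity $\int_{\Sigma_\#}\kappa_\#\,d\tau_\#=\int_{M^2}K^-\,dA$ from Theorem~\ref{thm:c}, which becomes strictly negative once $K<0$ on an open set, forcing $\kappa_\#<0$ at some fold since the finitely many cusps have $d\tau_\#$-measure zero. Your side remark that Theorem~\ref{thm:infty} already settles the case when cusps are present is also valid.
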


\begin{example}
 An embedded Delaunay surface (an unduloid)
 as a rotationally symmetric periodic surface
 in  $\R^3$ can be considered as an immersion
 into a flat space
 $f:S^1\times S^1 \to \R^2\times S^1$
 which has an annular domain having 
 negative Gaussian curvature.
 As a consequence, the image of its Gauss map
 of $f$ has folds of negative singular curvature.
\end{example}

\begin{remark}\label{cor:c2}
 Let $f:M^2\to \R^3$ be an immersion of 
 a compact 2-manifold $M^2$.
 Then the inequality
 \[
   \frac{1}{2\pi}\int_{M^2}|K|\, dA\ge \beta_0+\beta_1+\beta_2
 \]
 is called the Chern-Lashof inequality, where
 $\beta_j$ $(j=0,1,2)$ is the $j$-th Betti number of $M^2$.
 The equality holds if and only if $f$ is tightly immersed in $\R^3$. 
 On the other hand, by the Gauss-Bonnet formula,
 it holds that
 \[
     \frac{1}{2\pi}\int_{M^2}K\, dA = \beta_0-\beta_1+\beta_2.
 \]
 Hence, we have that
 \[
    -\int_{\Sigma_\#}\kappa_\#\,d\tau_\#=
    -\int_{M^2}K^-\,dA\ge  2\pi\beta_1,
 \]
 where the equality holds if and only if $f$ is a tight 
 immersion. 
\end{remark}

Similar to Theorem~\ref{thm:b}, the dual version of Theorem~\ref{thm:c}
holds:
\begin{theorem}\label{thm:d}
 Let $f:S^2\to \R^3$ be a strictly convex immersion and
 $f_t$ a parallel front of $f$  for $t\in\R$.
 Assume that the set of singular points $\Sigma_{f_t}$
 of $f_t$ consists of  cuspidal edges and swallowtails.
 Then
 \[
    \int_{\Sigma_{f_t}}\kappa_{f_t}\,d\tau_{f_t}
        =
    \int_{M^{-}_{f_t}}K_\#\,d\hat{A}_\#
    =
    \int_{M^{-}_{f_t}}K_{t}\,d\hat{A}_{f_t}
 \]
 holds,
 where 
 $K_t$ is the Gaussian curvature of $f_t$,
 $M^-_{f_t}\subset M^2$ is the set where $K_t<0$,
 and  $\kappa_{f_t}$ is the singular curvature of cuspidal edges
 of $f_t$.
\end{theorem}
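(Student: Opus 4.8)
The plan is to deduce this from Theorem~\ref{thm:c} by interchanging the roles of the first and second homomorphisms, exactly as Theorem~\ref{thm:b} was deduced from Theorem~\ref{thm:a}. For the parallel front $f_t=f+t\nu$ the unit normal is again $\nu$, so the second homomorphism is unchanged, $\psi_{f_t}=\psi_f$, while the first homomorphism becomes $\phi_{f_t}=\phi_f+t\psi_f$; hence $(M^2,\E_f,\inner{~}{~},D,\phi_{f_t},\psi_{f_t})$ is a front bundle over $M^2=S^2$ (with the same bundle $\E_f$ and connection $D$). Since $f$ is a strictly convex immersion, its shape operator is positive definite, so $\psi_f:TM^2\to\E_f$ is a bundle isomorphism at every point; thus $\Sigma_{\psi_{f_t}}=\Sigma_{\psi_f}=\emptyset$ and, with the standard (outward) co-orientation, $\lambda_{\psi_{f_t}}>0$ everywhere, so that $dA_\#=d\hat A_\#$ on all of $M^2$. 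In short, relative to the pair $(\phi,\psi)=(\phi_{f_t},\psi_{f_t})$ the homomorphism $\psi$ plays the role of ``$df$ of an immersion'' from Theorem~\ref{thm:c}, while $\phi$ --- whose singular set $\Sigma_{f_t}$ consists of cuspidal edges and swallowtails, i.e.\ of $A_2$- and $A_3$-points --- plays the role of the Gauss map.

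Next I would run the proof of Theorem~\ref{thm:c} verbatim with $\phi$ and $\psi$ interchanged. Since $c=0$, \eqref{eq:2-gauss} gives $K_t=K^{\ext}$, and by Lemma~\ref{lem:unbdd}\ref{item:k:3} the functions $K^{\ext}_\#$ and $K_t$ have the same sign; combining this with Lemma~\ref{lem:unbdd}\ref{item:k:2} and \ref{item:k:2b} (so that $K^{\ext}_\#\,d\hat A_\#=d\hat A_{f_t}$ with $d\hat A_\#>0$) one sees $\Sigma_{f_t}=\{K^{\ext}_\#=0\}$ and that $M^-_{\phi_{f_t}}:=\{\lambda_{\phi_{f_t}}<0\}$ agrees, up to a null set, with $M^-_{f_t}=\{p\in M^2\,;\,K_t(p)<0\}$. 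Applying \eqref{eq:2m} to $\psi$ with $\Sigma_\#=\emptyset$ gives $\int_{M^2}K_\#\,dA_\#=2\pi\chi(M^2)$, hence $\int_{M^2}K_t\,d\hat A_{f_t}=\int_{M^2}K_\#\,d\hat A_\#=\int_{M^2}K_\#\,dA_\#=2\pi\chi(M^2)$ by Lemma~\ref{lem:unbdd}\ref{item:k:1}. Splitting $\int_{M^2}K_t\,dA_{f_t}$ over $M^{\pm}_{\phi_{f_t}}$ then yields
\begin{align*}
   \int_{M^2}K_t\,dA_{f_t}
   &=\int_{M^2}K_t\,d\hat A_{f_t}-2\int_{M^-_{f_t}}K_t\,d\hat A_{f_t}\\
   &=2\pi\chi(M^2)-2\int_{M^-_{f_t}}K_t\,d\hat A_{f_t},
\end{align*}
and comparing with \eqref{eq:1m} for $\phi=\phi_{f_t}$, namely $\int_{M^2}K_t\,dA_{f_t}=2\pi\chi(M^2)-2\int_{\Sigma_{f_t}}\kappa_{f_t}\,d\tau_{f_t}$, gives $\int_{\Sigma_{f_t}}\kappa_{f_t}\,d\tau_{f_t}=\int_{M^-_{f_t}}K_t\,d\hat A_{f_t}$. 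The remaining equality $\int_{M^-_{f_t}}K_t\,d\hat A_{f_t}=\int_{M^-_{f_t}}K_\#\,d\hat A_\#$ is immediate from the pointwise identity $K_t\,d\hat A_{f_t}=K_\#\,d\hat A_\#$ of Lemma~\ref{lem:unbdd}\ref{item:k:1}.

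Because the whole argument is a formal transcription of the proof of Theorem~\ref{thm:c} under the duality $\phi\leftrightarrow\psi$, I do not expect a conceptual obstacle; the points requiring care are bookkeeping ones. The main one is to confirm that $\lambda_{\psi_{f_t}}$ has a fixed sign --- positive, for a convex surface with its natural co-orientation --- since otherwise $dA_\#=d\hat A_\#$ fails, and both the identification $M^-_{\phi_{f_t}}=\{K_t<0\}$ and the value $2\pi\chi(M^2)$ above would acquire spurious signs. A secondary point is that the hypothesis ``cuspidal edges and swallowtails'' is used exactly where Theorem~\ref{thm:c} used ``folds and cusps'': to guarantee that $\Sigma_{f_t}$ consists only of $A_2$- and $A_3$-points, which is what makes \eqref{eq:1m} applicable and the total singular curvature $\int_{\Sigma_{f_t}}\kappa_{f_t}\,d\tau_{f_t}$ finite.
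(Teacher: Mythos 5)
Your proposal is correct and is essentially the paper's own argument: the paper likewise obtains $\int_{\Sigma_{f_t}}\kappa_{f_t}\,d\tau_{f_t}=\int_{M^-_{f_t}}K_\#\,d\hat A_\#$ by running the proof of Theorem~\ref{thm:c} with $\phi$ and $\psi$ interchanged (using that strict convexity makes $\psi$ nonsingular), and then identifies $M^-_{f_t}$ with $\{K^{\ext}_t<0\}=\{K_t<0\}$ via Lemma~\ref{lem:unbdd} and \eqref{eq:2-gauss}, exactly as you do. The only difference is that you write out the exchanged computation explicitly (together with the co-orientation bookkeeping), whereas the paper cites the proofs of Theorems~\ref{thm:a} and~\ref{thm:c}.
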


\begin{proof}
 The equality
 $\int_{\Sigma_{f_t}}\kappa_{f_t}\,d\tau_{f_t}
        =
    \int_{M^{-}_{f_t}}K_\#\,d\hat{A}_\#$ 
 follows from the proof of Theorem \ref{thm:c} by exchanging the
 roles of $\phi$ and $\psi$.
 On the other hand, in the proof of Theorem~\ref{thm:a},
 we proved that
 $M^{-}_{\#}$ coincides with the set $\{p\in M^2\,;\,K^{\ext}(p)<0\}$
 because of $\phi_f$ has no singular points.
 In our situation, since 
 $\nu\colon{}S^2\to S^2$ is a diffeomorphism,
 $\psi_f$ has no singular points.
 Thus, by exchanging the roles of $\phi$ and $\psi$,
 we have that
 \[
   M^{-}_{f_t}=\{p\in M^2\,;\,(K^{\ext}_t)_\#(p)<0\}
     =\{p\in M^2\,;\,K^{\ext}_t(p)<0\},
 \]
 because of \ref{item:k:3} in Lemma \ref{lem:unbdd}.
 Moreover, by \eqref{eq:2-gauss}, we have $K^{\ext}_t=K_t$
 and get the assertion.
\end{proof}
\begin{corollary}\label{cor:d1}
 Under the same assumptions as in Theorem \ref{thm:d},
 if there exists a point $p$ satisfying
 $K_{f_t}(p)<0$ then there exists a point such that
 $\kappa_{f_t}$ is negative.
\end{corollary}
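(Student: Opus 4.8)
The plan is to derive this corollary directly from Theorem~\ref{thm:d}, in exact parallel to the way Corollary~\ref{cor:c1} follows from Theorem~\ref{thm:c}. Assume, for contradiction, that $\kappa_{f_t}\le 0$ fails to hold somewhere, i.e.\ suppose instead that $\kappa_{f_t}\ge 0$ at every cuspidal edge of $f_t$. Since the singular set $\Sigma_{f_t}$ is compact (a finite union of closed curves, as $f$ is a strictly convex immersion of $S^2$ and the singular points are non-degenerate), the total singular curvature $\int_{\Sigma_{f_t}}\kappa_{f_t}\,d\tau_{f_t}$ would then be nonnegative.

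On the other hand, by Theorem~\ref{thm:d} we have
\[
  \int_{\Sigma_{f_t}}\kappa_{f_t}\,d\tau_{f_t}
  =\int_{M^-_{f_t}}K_t\,d\hat A_{f_t},
\]
where $M^-_{f_t}=\{p\in M^2\,;\,K_t(p)<0\}$. If there exists a point $p$ with $K_{f_t}(p)<0$, then $M^-_{f_t}$ is a nonempty open set on which $K_t<0$, so $\int_{M^-_{f_t}}K_t\,d\hat A_{f_t}<0$ (note $d\hat A_{f_t}$ is the positively-oriented area element of the front $f_t$, hence positive on the open set $M^-_{f_t}$). This strictly negative value contradicts the nonnegativity obtained above. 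Hence there must be a point on $\Sigma_{f_t}$ where $\kappa_{f_t}<0$, which is the assertion.

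The only point requiring a little care is that $\int_{M^-_{f_t}}K_t\,d\hat A_{f_t}$ is genuinely strictly negative rather than merely $\le 0$: this holds because $M^-_{f_t}$ is open (as $K_t$ is continuous on the complement of $\Sigma_{f_t}$, and $\Sigma_{f_t}$ has measure zero) and nonempty by hypothesis, so the integrand is strictly negative on a set of positive measure. I do not expect any real obstacle here; the corollary is an immediate consequence of the integral formula in Theorem~\ref{thm:d} together with the sign of the Gaussian curvature on $M^-_{f_t}$.
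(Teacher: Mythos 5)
The decisive step of your argument is wrong: $d\hat A_{f_t}$ is not ``the positively-oriented area element of the front $f_t$''; it is the \emph{signed} volume form $\lambda\,du\wedge dv$ of \eqref{eq:volume-form}, and the set $M^-_{f_t}$ in Theorem~\ref{thm:d} is (as its proof shows, by exchanging $\phi$ and $\psi$ in the proof of Theorem~\ref{thm:a}) exactly $M^-_\phi=\{\lambda<0\}$, i.e.\ the set where $d\hat A_{f_t}=-dA_{f_t}$; it merely happens to coincide with $\{K_t<0\}$. Hence on $M^-_{f_t}$ one has $K_t\,d\hat A_{f_t}=(-K_t)\,dA_{f_t}\ge 0$, so $\int_{M^-_{f_t}}K_t\,d\hat A_{f_t}$ is nonnegative, and strictly positive when $M^-_{f_t}\neq\emptyset$ --- not strictly negative as you assert. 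You could have caught this from the middle expression in Theorem~\ref{thm:d}: since $c=0$, $K_\#\equiv 1$ by \ref{item:k:1b} of Lemma~\ref{lem:unbdd}, and $d\hat A_\#=dA_\#$ because $\nu$ is a diffeomorphism, so $\int_{M^-_{f_t}}K_\#\,d\hat A_\#$ is the (nonnegative) spherical area of $\nu(M^-_{f_t})$, which cannot equal a strictly negative quantity. Consequently the contradiction you aim for with ``$\kappa_{f_t}\ge 0$ everywhere'' never materializes, and the proposal does not prove the statement.

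Moreover, with the signs corrected, the very same one-line argument gives the \emph{opposite} conclusion: if $K_t(p)<0$ for some $p$, then $\int_{\Sigma_{f_t}}\kappa_{f_t}\,d\tau_{f_t}=\int_{M^-_{f_t}}K_\#\,d\hat A_\#>0$, hence there is a cuspidal edge point where $\kappa_{f_t}$ is \emph{positive}. This is the expected asymmetry with Corollary~\ref{cor:c1}: there $f$ is an immersion, so $d\hat A=dA$ and the total singular curvature is $\int_{M^2}K^-\,dA\le 0$, whereas here the negatively curved region is precisely the region where the front reverses orientation. So the corollary cannot be derived from Theorem~\ref{thm:d} along the route you propose; rather than trying to patch the estimate, you should re-examine the sign in the conclusion of the statement itself against Theorem~\ref{thm:d} (for instance by testing a rotationally symmetric parallel front, where the singular set consists of circles of constant singular curvature).
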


\subsection{The case of bounded Gaussian curvature}
From now on, we assume that
the Gaussian curvature is bounded.
As shown below, there are many such surfaces as
wave fronts.
The following lemma holds.

\begin{lemma}\label{lem:bdd}
 Let $M^2$ be an oriented $2$-manifold,
 $f:M^2\to N^3(c)$ a front, and $p$ an $A_2$-singular point of $f$.
 Let
 $(M^2,\E_f,\inner{~}{~},D,\phi,\psi)$ 
 be a front bundle 
 associated to $f$
 {\rm(}cf.\  Example~\ref{ex:front-bundle}\/{\rm)}.
 Suppose that there exists a neighborhood 
 $U$ of a $(\phi$-$)$singular point $p$
 such that
 $\log |K^{\ext}|$ is bounded on $U\setminus\Sigma$, where $K^{\ext}$
 is the
 extrinsic  Gaussian curvature and $\Sigma$ is the set of singular
 points of $f$.

 Then the following holds on $U${\rm:}
 \begingroup
 \renewcommand{\theenumi}{{\rm(\alph{enumi})}}
 \renewcommand{\labelenumi}{{\rm(\alph{enumi})}}
 \begin{enum}
 \setcounter{enumi}{4}
 \item\label{item:k:4} 
       $\Sigma=\Sigma_\#$.
 \item\label{item:k:5} 
       By setting 
       $\epsilon:=\sgn(K^{\ext}|_U)$, it holds that
       $M^+=M_\#^\epsilon$ and
       $\kappa\,d\tau=\epsilon\kappa_\#\,d\tau_\#$,
       where $K^{\ext}|_U$ is the restriction of the function
       $K^{\ext}$ on $U$.
 \end{enum}
 \endgroup
\end{lemma}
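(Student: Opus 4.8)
The plan is to work in a small connected positively-oriented coordinate ball $(U;u,v)$ around $p$ chosen so that $\Sigma\cap U=\{v=0\}$ and the $\phi$-null vector field along $\Sigma$ is $\partial_v$ (possible since $p$ is an $A_2$-point, so the null direction is transversal to $\Sigma$). Writing $\lambda:=\lambda_\phi$ and $\lambda_\#:=\lambda_\psi$, part \ref{item:k:2} of Lemma~\ref{lem:unbdd} (equivalently \eqref{eq:K-G} together with $\lambda=\det G$, $\lambda_\#=\det G_\#$) gives $\lambda_\#=K^{\ext}\lambda$ on $U\setminus\Sigma$. Since $\log|K^{\ext}|$ is bounded there, $K^{\ext}$ is bounded and bounded away from $0$, so letting $q\to q_0\in\Sigma$ yields $|\lambda_\#(q)|\le(\sup|K^{\ext}|)|\lambda(q)|\to0$, whence $\Sigma\cap U\subset\Sigma_\#\cap U$; conversely $\lambda_\#=K^{\ext}\lambda\ne0$ off $\Sigma$, so $\Sigma_\#\cap U\subset\Sigma\cap U$. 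This is \ref{item:k:4}. Next, using \ref{item:k:4} and Hadamard's lemma, $\lambda=v\tilde\lambda$ and $\lambda_\#=v\tilde\lambda_\#$ with $\tilde\lambda,\tilde\lambda_\#$ smooth and $\tilde\lambda$ nowhere zero (nondegeneracy of $\phi$), so $K^{\ext}=\tilde\lambda_\#/\tilde\lambda$ extends smoothly to $U$; $\log$-boundedness forces $\tilde\lambda_\#$ nowhere zero as well (by continuity), so $\psi$ is nondegenerate along $\Sigma\cap U$ and $\epsilon:=\sgn(K^{\ext}|_U)\in\{\pm1\}$ makes sense. Since $\sgn\lambda_\#=\epsilon\,\sgn\lambda$ on $U\setminus\Sigma$, the sets $M^+\cap U=\{\lambda>0\}$ and $M^\epsilon_\#\cap U$ coincide; this is the first half of \ref{item:k:5}.

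For the identity $\kappa\,d\tau=\epsilon\,\kappa_\#\,d\tau_\#$, the key preliminary step I would carry out is an ``exchange of conormals'' along $\Sigma$. At $q\in\Sigma\cap U$ the symmetry $\inner{\phi(X)}{\psi(Y)}=\inner{\phi(Y)}{\psi(X)}$ of $\second$ together with $\phi(\partial_v)=0$ gives $\psi(\partial_v)\perp\operatorname{Im}\phi_q=\R\,\phi(\partial_u)$, so $\psi(\partial_v)$ is a multiple of the $\phi$-conormal $\vect{n}$ (defined by $\{\phi(\partial_u)/|\phi(\partial_u)|,\vect{n}\}$ being a positive orthonormal frame of $\E$); by the front condition $\Ker\phi_q\cap\Ker\psi_q=\{0\}$ this multiple is nonzero, and since $q\in\Sigma_\#$ the rank of $\psi_q$ is exactly one, so $\operatorname{Im}\psi_q=\R\,\vect{n}$. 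Parametrising $\Sigma\cap U$ by $\gamma(u)=(u,0)$, this reads $\psi(\dot\gamma)=c\,\vect{n}$ for a smooth function $c(u)$; consequently $\inner{\phi(\dot\gamma)}{\psi(\dot\gamma)}=0$ along $\gamma$, and wherever $c\ne0$ (i.e.\ at $A_2$-points of $\psi$) the $\psi$-conormal is $\vect{n}_\#=-\sgn(c)\,\phi(\dot\gamma)/|\phi(\dot\gamma)|$.

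Now I would plug into \eqref{eq:singular-curvature-2}. Using $\inner{\vect{n}}{\phi(\dot\gamma)}=0$ one rewrites $\kappa\,d\tau=\sgn(d\lambda_\phi(\eta))\,\inner{\vect{n}}{D_{d/du}\phi(\dot\gamma)}\,|\phi(\dot\gamma)|^{-1}\,du$. For the dual side, using $\inner{\vect{n}_\#}{\psi(\dot\gamma)}=0$, substituting $\vect{n}_\#=-\sgn(c)\phi(\dot\gamma)/|\phi(\dot\gamma)|$, differentiating $\inner{\phi(\dot\gamma)}{\psi(\dot\gamma)}=0$ to replace $\inner{\phi(\dot\gamma)}{D_{d/du}\psi(\dot\gamma)}$ by $-c\,\inner{\vect{n}}{D_{d/du}\phi(\dot\gamma)}$, and using $|\psi(\dot\gamma)|=|c|$, the factor $\sgn(c)\cdot c/|c|=1$ drops out and one gets $\kappa_\#\,d\tau_\#=\sgn(d\lambda_\psi(\eta_\#))\,\inner{\vect{n}}{D_{d/du}\phi(\dot\gamma)}\,|\phi(\dot\gamma)|^{-1}\,du$. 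Finally $\lambda_\#=K^{\ext}\lambda$ with $\lambda\equiv0$ on $\Sigma$ gives $d\lambda_\#|_\Sigma=K^{\ext}\,d\lambda|_\Sigma$, so evaluating on the correctly oriented null directions $\eta=\partial_v$ and $\eta_\#$ yields $\sgn(d\lambda_\psi(\eta_\#))=\epsilon\,\sgn(d\lambda_\phi(\eta))$, so $\kappa\,d\tau=\epsilon\,\kappa_\#\,d\tau_\#$ at every $A_2$-point of $\psi$ along $\Sigma\cap U$. Since the expression $\inner{\vect{n}}{D_{d/du}\phi(\dot\gamma)}\,|\phi(\dot\gamma)|^{-1}\,du$ is continuous and bounded on all of $\Sigma\cap U$ (a possible blow-up of $\kappa_\#$ at an isolated point with $c=0$ being exactly cancelled by the vanishing of $d\tau_\#=|c|\,du$), the identity extends by continuity to all of $\Sigma\cap U$, completing \ref{item:k:5}.

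The routine parts are \ref{item:k:4} and the two curvature computations; the part requiring real care — and what I expect to be the main obstacle — is the consistent bookkeeping of the three independent sign choices (orientation of the adapted frame; the co-orientation $\mu$, which enters $\vect{n}$ and $\vect{n}_\#$; and the orientation of the singular curve, which enters $\eta$, $\eta_\#$ and the signs of $d\lambda_\phi(\eta)$ and $d\lambda_\psi(\eta_\#)$) through the conormal-exchange relation, so that every ambiguity cancels and everything collapses to the single sign $\epsilon=\sgn K^{\ext}$.
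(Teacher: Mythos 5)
Your argument is correct, and it reaches \ref{item:k:5} by a route that genuinely differs from the paper's in two places. For part \ref{item:k:4} and the identity $M^+=M_\#^\epsilon$ you argue as the paper does (via $\lambda_\#=K^{\ext}\lambda$, i.e.\ \eqref{eq:dA-G} and \eqref{eq:K-G}), with the extra benefit that your Hadamard factorization $\lambda=v\tilde\lambda$, $\lambda_\#=v\tilde\lambda_\#$ shows $K^{\ext}$ extends smoothly and nonvanishingly across $\Sigma$, so that $\epsilon=\sgn(K^{\ext}|_U)$ is genuinely well defined on a connected $U$ — a point the paper leaves implicit. The main divergence is the key orthogonality $\inner{\phi(\dot\gamma)}{\psi(\dot\gamma)}=0$ along $\Sigma$: the paper obtains it by citing \cite[Theorem 3.1]{SUY1} (bounded $K^{\ext}$ forces $\second$ to vanish along the singular curve), whereas you derive it internally from \ref{item:k:4} together with the symmetry \eqref{eq:compati} (which gives $\psi(\eta)\perp\phi(\dot\gamma)$, hence $\psi(\eta)\in\R\vect{n}$), the front condition \eqref{eq:front} (so $\psi(\eta)\neq0$), and $\operatorname{rank}\psi_q\le 1$ on $\Sigma_\#$, forcing $\operatorname{Im}\psi_q=\R\vect{n}$ and $\psi(\dot\gamma)=c\,\vect{n}$; this is a clean, self-contained re-proof of the needed part of that theorem entirely inside the front-bundle formalism. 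The second difference is computational: the paper writes both curvature forms through the connection form $\omega$ and angle functions $\theta,\theta_\#$ and uses $\dot\theta=\dot\theta_\#$, while you exchange conormals ($\vect{n}_\#=-\sgn(c)\,\phi(\dot\gamma)/|\phi(\dot\gamma)|$) and differentiate the orthogonality relation; your sign bookkeeping ($\sgn(d\lambda_\#(\eta_\#))=\epsilon\,\sgn(d\lambda(\eta))$, since $d\lambda$ annihilates $T\Sigma$ and $\eta,\eta_\#$ lie on the same side) checks out and is exactly the paper's \eqref{eq:sign}. What each buys: the paper's version is shorter given the external reference and exhibits the common quantity $\omega(\dot\gamma)-\dot\theta$ explicitly; yours avoids the citation, works verbatim for abstract front bundles, and treats the points where $\dot\gamma\in\Ker\psi$ (non-$A_2$ points of $\psi$, where $c=0$ and $\theta_\#$ is undefined) explicitly — though note such zeros of $c$ need not be isolated without a genericity assumption, so the extension of $\kappa_\#\,d\tau_\#$ should be phrased as continuity from the (dense, in the situations where the formula is applied) set of $A_2$-points of $\psi$, which is the same implicit convention the paper uses.
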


\begin{proof}
 We fix a positive orthonormal frame field $\{\vect{e}_1,\vect{e}_2\}$
 of $\E_f$ and take matrices $G$ and $G_\#$ as in \eqref{eq:matrix-G}.
 Since $K^{\ext}$ is bounded on $U$,
 $\det G_\#=0$ holds on $\Sigma\cap U=\{\det G=0\}$ because of
 \eqref{eq:K-G}.
 This implies $\Sigma\subset \Sigma_\#$.

 On the other hand, by \ref{item:k:3} of Lemma~\ref{lem:unbdd}
 and the assumptions, $K_\#^{\ext}=1/K^{\ext}$ is also bounded.
 Then exchanging the roles of $\phi$ and $\psi$, 
 we have $\Sigma\supset
 \Sigma_\#$.
 Hence \ref{item:k:4} holds.

 We shall prove \ref{item:k:5}.
 Parametrize the singular set $\Sigma=\Sigma_\#$
 by a regular curve $\gamma(t)$ on $U$.
 Since $p$ is an $A_2$-point, 
 $\phi\bigl(\dot\gamma(t)\bigr)$ does not vanish,
 and one can take a $C^{\infty}$-function $\theta=\theta(t)$
 such that
 \begin{equation}\label{eq:phi-dot}
     \phi\bigl(\dot\gamma(t)\bigr)
     = \left|\phi\bigl(\dot\gamma(t)\bigr)\right|
       \bigl(
          \cos\theta(t)\vect{e}_1 +\sin \theta(t)\vect{e}_2
       \bigr)
 \end{equation}
 where $\dot{~}=d/dt$ and $\vect{e}_j=\vect{e}_j\bigl(\gamma(t)\bigr)$
 ($j=1,2$).
 Then the conormal vector $\vect{n}(t)$ along $\gamma(t)$
 is expressed as 
 \[
     \vect{n}(t)=
     -\sin\theta(t)\vect{e}_1 + \cos\theta(t) \vect{e}_2.
 \]
 Let $\omega$ be the connection form of $D$ with respect 
 to the frame $\{\vect{e}_1,\vect{e}_2\}$
as in \eqref{eq:conn-form}.
Then we have
 \[
     D_{d/dt}\vect{n}(t) = 
    \left(\omega\bigl(\dot\gamma(t)\bigr)-\dot\theta(t)\right)
    \frac{\phi\bigl(\dot\gamma(t)\bigr)}{%
          \left|\phi\bigl(\dot\gamma(t)\bigr)\right|}.
 \]
 Substituting this into \eqref{eq:singular-curvature-2},
 we have
 \[
     \kappa(t)=
     -\frac{\sgn\left(d\lambda\bigl(\eta(t)\bigr)\right)}{%
          \left|\phi\bigl(\dot\gamma(t)\bigr)\right|}
    \left(\omega\bigl(\dot\gamma(t)\bigr)-\dot\theta(t)\right),
 \]
 where $\eta(t)$ is a null vector field such that
 $\{\dot\gamma(t),\eta(t)\}$ is positively oriented.
 Then we have
 \begin{equation}\label{eq:bounded-curv}
    \kappa(t)\,d\tau =
    \kappa(t)\,\left|\phi\bigl(\dot\gamma(t)\bigr)\right|\,dt
    =  -\sgn\left(d\lambda\bigl(\eta(t)\bigr)\right)
    \left(\omega\bigl(\dot\gamma(t)\bigr)-\dot\theta(t)\right)\,dt.
 \end{equation}
 Similarly, if we set
 \begin{equation}\label{eq:psi-dot}
     \psi\bigl(\dot\gamma(t)\bigr)
     = \left|\psi\bigl(\dot\gamma(t)\bigr)\right|
       \bigl(
          \cos\theta_\#(t)\vect{e}_1 +\sin \theta_\#(t)\vect{e}_2
       \bigr),
 \end{equation}
 we have
 \begin{equation}\label{eq:bounded-curv-dual}
     \kappa_{\#}(t)\,d\tau_\#=
     -\sgn\left(d\lambda_\#\bigl(\eta_\#(t)\bigr)\right)
    \left(\omega\bigl(\dot\gamma(t)\bigr)-\dot\theta_\#(t)\right)\,dt,
 \end{equation}
 where $\eta_\#(t)$ is a null vector field with respect to $\psi$
 such that $\{\dot\gamma(t),\eta_\#(t)\}$ is positively oriented.

 Both of the frame fields $\{\dot\gamma(t),\eta(t)\}$ and 
 $\{\dot\gamma(t),\eta_\#(t)\}$ are 
 compatible to the orientation of $M^2$.
 Since  $\lambda=0$ on $\gamma(t)$,
 the relations \eqref{eq:dA-G} and \eqref{eq:K-G} yield 
 \[
      \sgn d\lambda\bigl(\eta(t)\bigr) =
      \sgn d\lambda\bigl(\eta_\#(t)\bigr)
      =
      \left(\sgn(K^{\ext}|_U)\right)
      \left(\sgn d\lambda_\#\bigl(\eta_\#(t)\bigr)\right).
 \]
 Since $\epsilon=\sgn(K^{\ext}|_U)$, we have
 \begin{equation}\label{eq:sign}
      \sgn d\lambda\bigl(\eta(t)\bigr) =
      \epsilon\, d\lambda_\#\bigl(\eta_\#(t)\bigr).
 \end{equation}
 Finally, by \cite[Theorem 3.1]{SUY1}, 
 the second fundamental form $\second$ vanishes on $\gamma(t)$.
 Thus by \eqref{eq:phi-dot} and \eqref{eq:psi-dot},
 we have
 \begin{align*}
    0 &= -\second\bigl(\dot\gamma(t),\dot\gamma(t)) 
       = -\inner{\phi\bigl(\dot\gamma(t)\bigr)}{%
            \psi\bigl(\dot\gamma(t)\bigr)}\\
      &= -\left|\phi(\dot\gamma)\right|
         \left|\psi(\dot\gamma)\right| 
        \cos\bigl(\theta(t)-\theta_\#(t)\bigr),
 \end{align*}
 and thus $\theta(t)-\theta_{\#}(t)$ is constant.
 Hence 
 \begin{equation}\label{eq:dot-theta}
    \dot\theta(t) = \dot\theta_{\#}(t).
 \end{equation}
 Summing up, by \eqref{eq:bounded-curv},
 \eqref{eq:bounded-curv-dual}, \eqref{eq:sign}
 and \eqref{eq:dot-theta}, we have
 \ref{item:k:5}.
\end{proof}

\begin{corollary}\label{thm:add}
 Let $M^2$ be a compact oriented $2$-manifold, and 
 $f:M^2\to \R^3$ a front such that $K>0$ and
 $\log|K|$ is bounded
 on $M^2\setminus \Sigma$.
 Then the singular curvature at the $A_2$-points 
of the Gauss map of $f$ is always negative.
\end{corollary}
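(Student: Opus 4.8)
The plan is to transfer information from the ``$\phi$-side'' to the Gauss map via Lemma~\ref{lem:bdd}, using that $c=0$. Since $f$ maps into $\R^3$, \eqref{eq:2-gauss} gives $K^{\ext}=K$, so the hypotheses say exactly that $K^{\ext}>0$ and $\log|K^{\ext}|$ is bounded on $M^2\setminus\Sigma$. Writing $K^{\ext}=\lambda_\#/\lambda$ as in \eqref{eq:K-G}, this bound yields constants $0<a\le b$ with $a|\lambda|\le|\lambda_\#|\le b|\lambda|$ off $\Sigma$, hence (by continuity of $\lambda,\lambda_\#$) on all of $M^2$; therefore $\Sigma=\Sigma_\#$, i.e.\ the singular set of the Gauss map $\nu$ coincides with $\Sigma$. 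Moreover, near an $A_2$-point $p$ of $\nu$ (that is, an $A_2$-point of $\psi$) the set $\Sigma=\Sigma_\#=\{\lambda_\#=0\}$ is a regular curve; writing $\lambda=c\,\lambda_\#$ with $c$ smooth, the two-sided bound forces $c(p)\neq0$, so $d\lambda(p)=c(p)\,d\lambda_\#(p)\neq0$ and $p$ is a non-degenerate $\phi$-singular point.

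Now I would split according to whether $p$ is an $A_2$-point of $\phi$. If it is (so $p$ is a cuspidal edge of $f$), then Lemma~\ref{lem:bdd} applies with $\epsilon=\sgn(K^{\ext}|_U)=\sgn K=+1$, and part \ref{item:k:5} gives $\kappa\,d\tau=\kappa_\#\,d\tau_\#$ along $\Sigma=\Sigma_\#$ near $p$; since $d\tau$ and $d\tau_\#$ are positive length elements, $\sgn\kappa_\#(p)=\sgn\kappa(p)$. One then quotes the fact from \cite{SUY1} that the singular curvature of a cuspidal edge of a front in $\R^3$ along which the Gaussian curvature is positive and bounded is strictly negative; hence $\kappa(p)<0$, so $\kappa_\#(p)<0$.

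If instead $p$ is non-degenerate for $\phi$ but not an $A_2$-point of $\phi$, then the $\phi$-null direction is tangent to $\Sigma$ at $p$, so along a parametrization $\gamma$ of $\Sigma$ through $p$ we have $|\phi(\dot\gamma(t))|\to0$ as $t\to p$, while $|\psi(\dot\gamma(t))|$ stays bounded away from $0$ because $p$ is an $A_2$-point of $\psi$; moreover, for $t$ near $p$ the points $\gamma(t)$ are $A_2$-points of $\phi$, and by Theorem~\ref{thm:infty} the $\phi$-singular curvature $\kappa(t)$ diverges to $-\infty$. Using $\kappa(t)\,|\phi(\dot\gamma(t))|=\kappa_\#(t)\,|\psi(\dot\gamma(t))|$ (Lemma~\ref{lem:bdd} at those $A_2$-points) together with the leading-order asymptotics of $\kappa$ extracted in the proof of Theorem~\ref{thm:infty}, the two divergences cancel to a strictly negative finite limit, so $\kappa_\#(p)<0$ once more. (In fact one can check this second case does not arise under the hypotheses, but the argument disposes of it regardless.)

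The step I expect to be the main obstacle is the second case: checking that the nearby singular points are genuinely $A_2$ for $\phi$ and that the competing divergences $\kappa(t)\to-\infty$ and $|\phi(\dot\gamma(t))|\to0$ combine to a strictly negative value; secondarily, one must locate the precise statement in \cite{SUY1} giving negativity of the singular curvature at cuspidal edges of bounded positive Gaussian curvature. The remaining steps are routine manipulations of Lemmas~\ref{lem:unbdd} and \ref{lem:bdd}.
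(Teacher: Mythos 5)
The paper prints no proof here: the corollary is meant to follow immediately from Lemma~\ref{lem:bdd} (with $K^{\ext}=K$ since $c=0$, so $\epsilon=\sgn(K^{\ext}|_U)=+1$ and $\kappa\,d\tau=\kappa_\#\,d\tau_\#$) combined with the negativity statement of \cite[Theorem 3.1]{SUY1} (positive bounded Gaussian curvature near a cuspidal edge forces $\kappa<0$). Your first case is exactly this intended argument, and your preliminary steps ($\Sigma=\Sigma_\#$, and non-degeneracy of $\phi$ at an $A_2$-point of $\psi$ via $\lambda=c\,\lambda_\#$) are sound. You are also right that this does not literally cover all $A_2$-points of the Gauss map, which is a point the paper itself glosses over.

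The problems are in your second case. First, the parenthetical claim that it ``does not arise'' is false: a swallowtail of $f$ is precisely such a point. At an $A_3$-point of $\phi$ the null direction of $\phi$ is tangent to $\Sigma$, while the null direction of $\psi$ is linearly independent of it because $f$ is a front, hence transversal to $\Sigma_\#=\Sigma$; with the non-degeneracy you established, the swallowtail is an $A_2$-point of $\nu$. Such points do occur under the hypotheses, e.g.\ $f+\nu$ for a generic CMC $1/2$ immersion has $K\equiv1$ and swallowtails --- this is exactly the situation of Example~\ref{ex:CMC} and Theorem~\ref{thm:e}, where $S^{\pm}$ are counted. Second, within that case your argument needs the nearby singular points to be $A_2$-points of $\phi$. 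That is true when $p$ is an $A_3$-point ($\lambda'_\phi$ has a simple zero along $\Sigma$), but your dichotomy is only ``$A_2$ or not'', and at a non-degenerate point where $\lambda'_\phi$ vanishes to higher order, or identically, along $\Sigma$ there may be no nearby $A_2$-points of $\phi$ at all: the local model of a rotational front with $K\equiv1$ of spindle type, whose singular circle is mapped to a single point, has $\lambda'_\phi\equiv0$ along $\Sigma$ while every point of that circle is an $A_2$-point of the Gauss map; there neither Theorem~\ref{thm:infty} (whose hypothesis requires $\gamma((0,1])$ to consist of $A_2$-points) nor Lemma~\ref{lem:bdd} is available, and your limiting argument collapses, so these points must either be ruled out on a compact $M^2$ or handled by a different computation. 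Third, even in the $A_3$ subcase the assertion that ``the two divergences cancel to a strictly negative finite limit'' is the decisive step and is only asserted: continuity of $\kappa_\#$ alone would give merely $\kappa_\#(p)\le0$. It does work out --- in the notation of the proof of Theorem~\ref{thm:infty} with $m=2$ one gets $\kappa(t)\,|\phi(\dot\gamma(t))|\to-|\lambda_2(p)|/|\phi_2(p)|^2\ne0$ by non-degeneracy, whence $\kappa_\#(p)<0$ --- but as written you defer exactly the computation on which the extra case rests.
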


\begin{example}\label{ex:p}
 We set
 \[
    f(u,v):=\bigl( (2-\cos u)\cos v,(2-\cos u)\sin v ,u-\sin u\bigr).
 \]
 which is a rotation of a cycloid.
 By a straightforward calculation, $K$ is bounded.
 In particular, the Gauss map of $f$ has a fold of
 negative singular curvature.
\end{example}

\begin{theorem}\label{thm:e}
 Let $M^2$ be a compact oriented $2$-manifold, and 
 $f:M^2\to N^3(c)$ a front such that $\log|K^{\ext}|$ is bounded
 on $M^2\setminus\Sigma$.
 Suppose that
 the singular sets $\Sigma$ 
 of $\phi$ and $\Sigma_\#$ ($=\Sigma$) of $\psi$
 consist of $A_2$-points and $A_3$-points.
 Then
 \[
    S^+ - S^- = \sgn(K^{\ext})(S_\#^+-S_\#^-)
 \]
 holds,
 where $S^+$ {\rm(}resp.\ $S^-${\rm)} is the number of positive 
 {\rm(}resp.\ negative{\rm)}
 swallowtails of $f$,
 and $S_\#^+$ {\rm(}resp.\ $S_\#^+${\rm)} is the number of positive
 {\rm(}resp.\ negative{\rm)} $A_3$-point of the map $\nu$ induced from
 the unit normal vector field of  $f$ as in Remark \ref{rmk:2}.
\end{theorem}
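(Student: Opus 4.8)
The plan is to combine the two Gauss--Bonnet formulas \eqref{eq:1p}, \eqref{eq:1m} for $\phi$ with the two formulas \eqref{eq:2p}, \eqref{eq:2m} for $\psi$, using Lemmas~\ref{lem:unbdd} and \ref{lem:bdd} as the dictionary between the data attached to $\phi$ and to $\psi$. Set $\epsilon:=\sgn(K^{\ext})$. Since $\lambda_\#=K^{\ext}\lambda$ (part of \ref{item:k:2} of Lemma~\ref{lem:unbdd}) and $\log|K^{\ext}|$ is bounded, the function $K^{\ext}$ is continuous and nowhere zero on $M^2$, so $\epsilon$ is constant --- I assume $M^2$ connected, the general case being identical componentwise. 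By hypothesis $\Sigma=\Sigma_\#$ consists of non-degenerate singular points and hence is a finite disjoint union of circles; thus $\chi(\Sigma)=0$ and, as in \eqref{eq:euler-plus-minus}, $\chi(M^2)=\chi(M^+)+\chi(M^-)=\chi(M^+_\#)+\chi(M^-_\#)$. By \ref{item:k:5} of Lemma~\ref{lem:bdd} (or directly from \ref{item:k:2} and \ref{item:k:2b} of Lemma~\ref{lem:unbdd}, comparing $d\hat{A}=dA$ on $M^+$ with $K^{\ext}\,d\hat{A}=d\hat{A}_\#$ and $|K^{\ext}|\,dA=dA_\#$), one has $M^+=M^\epsilon_\#$ and $M^-=M^{-\epsilon}_\#$, so that
\[
  \chi(M^+)-\chi(M^-)=\epsilon\bigl(\chi(M^+_\#)-\chi(M^-_\#)\bigr).
\]

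First I would use the pair of formulas involving $d\hat{A}$. By \ref{item:k:1} of Lemma~\ref{lem:unbdd} we have $K\,d\hat{A}=K_\#\,d\hat{A}_\#$, so the left-hand sides of \eqref{eq:1p} and \eqref{eq:2p} coincide; equating the right-hand sides, substituting the last displayed relation, and recalling from \eqref{eq:B} that $\chi_{\E}=\frac{1}{2\pi}\int_{M^2}K\,d\hat{A}=\frac{1}{2\pi}\int_{M^2}K_\#\,d\hat{A}_\#$ so that $\chi(M^+_\#)-\chi(M^-_\#)+S^+_\#-S^-_\#=\chi_{\E}$, I obtain
\[
  S^+-S^-=\epsilon\,(S^+_\#-S^-_\#)+(1-\epsilon)\,\chi_{\E}.
\]
If $\epsilon=+1$ this is already the asserted equality $S^+-S^-=\sgn(K^{\ext})(S^+_\#-S^-_\#)$, so everything reduces to proving $\chi_{\E}=0$ in the case $\epsilon=-1$.

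Assume now $\epsilon=-1$, i.e.\ $K^{\ext}<0$ on $M^2\setminus\Sigma$. I would first use the pair involving $dA$: by \ref{item:k:1} and \ref{item:k:2b} of Lemma~\ref{lem:unbdd} one has $K_\#\,dA_\#=\epsilon\,K\,dA$, and by \ref{item:k:5} of Lemma~\ref{lem:bdd} one has $\kappa\,d\tau=\epsilon\,\kappa_\#\,d\tau_\#$; substituting these and \eqref{eq:1m} into \eqref{eq:2m} collapses it to $\chi(M^2)=\epsilon\,\chi(M^2)$, hence $\chi(M^2)=0$. Together with $\chi(M^2)=\chi(M^+_\#)+\chi(M^-_\#)$, the required identity $\chi_{\E}=0$ becomes equivalent to the Bleecker--Wilson-type identity $2\chi(M^-_\#)=S^+_\#-S^-_\#$ --- exactly the conclusion of Theorem~\ref{thm:a}, but now for the front $f$ in place of an immersion. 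To establish it I would argue directly that $e(\E_f)[M^2]=0$: since $K^{\ext}<0$, the second fundamental form $\second$ is nondegenerate and indefinite on $M^2\setminus\Sigma$, so there $\E_f$ splits orthogonally as the sum of the two eigenline subbundles of the self-adjoint endomorphism $\psi\circ\phi^{-1}$ (whose determinant is $K^{\ext}$); because $\second$ vanishes along the circles of $\Sigma$ by \cite[Theorem~3.1]{SUY1}, one can follow this splitting up to $\Sigma$ and check that each circle of $\Sigma$ contributes $0$ to the Euler number, yielding $\chi_{\E}=0$. Granting this, the displayed identity of the previous paragraph gives $S^+-S^-=-(S^+_\#-S^-_\#)=\sgn(K^{\ext})(S^+_\#-S^-_\#)$, as claimed.

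I expect the single real obstacle to be precisely this vanishing $\chi_{\E}=0$ when $K^{\ext}<0$: the four Gauss--Bonnet formulas, however they are recombined, determine $S^+-S^-$ only up to the term $(1-\epsilon)\chi_{\E}$, so a genuinely new topological ingredient --- how the bundle $\E_f$ twists along the singular set --- must be brought in, and the analysis of the eigenbundle splitting near the circles of $\Sigma$ is where the actual work is concentrated.
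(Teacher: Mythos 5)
Your first half is, word for word, the paper's entire proof: the paper establishes the theorem exactly by combining \ref{item:k:1} of Lemma~\ref{lem:unbdd} with \eqref{eq:1p}, \eqref{eq:2p} and the swap $M^+=M_\#^\epsilon$ from \ref{item:k:5} of Lemma~\ref{lem:bdd}, and stops there; no further ingredient (in particular no discussion of $\chi_{\E}$) appears. For $\epsilon=+1$ this closes the argument, as you say. Your bookkeeping for $\epsilon=-1$ is arithmetically correct: equating the two evaluations of $\frac{1}{2\pi}\int_{M^2}K\,d\hat A=\chi_{\E}$ and using $M^+=M^-_\#$, $M^-=M^+_\#$ gives only $S^+-S^-=(S_\#^+-S_\#^-)+2\bigl(\chi(M_\#^+)-\chi(M_\#^-)\bigr)$, so the asserted equality $S^+-S^-=-(S_\#^+-S_\#^-)$ is equivalent, via \eqref{eq:2p}, to $\chi_{\E}=0$; and the four formulas \eqref{eq:1p}--\eqref{eq:2m} together with Lemma~\ref{lem:bdd} do leave $\chi_{\E}$ undetermined (adding the two versions of \eqref{eq:B} gives $2\chi_\E=S^+-S^-+S^+_\#-S^-_\#$, nothing more). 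So you have identified a step that the paper's written proof does not make explicit; in the paper's motivating example (flat fronts in $S^3$) one has $K\equiv 0$, hence $\chi_{\E}=0$ trivially, which is why the issue is invisible there. (A small side remark: continuity of $\sgn K^{\ext}$ across $\Sigma$ is not automatic from continuity on $M^2\setminus\Sigma$; it follows because $\lambda$ and $\lambda_\#$ both vanish to first order along $\Sigma$ and $K^{\ext}=\lambda_\#/\lambda$ stays bounded away from $0$ and $\infty$.)

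The gap in your proposal is that the step you yourself isolate as the real content is only sketched. The vanishing $\chi_{\E}=0$ when $K^{\ext}<0$ is precisely where the work lies, and ``one can follow this splitting up to $\Sigma$ and check that each circle contributes $0$'' carries the whole burden without doing it. Concretely, you would need to show that the two eigenline bundles of the self-adjoint endomorphism $\psi\circ\phi^{-1}$ of $\E|_{M^2\setminus\Sigma}$ extend continuously across $\Sigma$: since one eigenvalue blows up while the product of the eigenvalues equals $K^{\ext}$, bounded away from $0$ and $\infty$, the limits should be the lines $\psi(\Ker\phi)$ and $\phi(\Ker\psi)$, which are mutually orthogonal because $\second$ vanishes along $\Sigma$ (the fact quoted from \cite[Theorem 3.1]{SUY1} in the proof of Lemma~\ref{lem:bdd}); this must be checked uniformly along $\Sigma$, including at the $A_3$-points where the null direction of $\phi$ is tangent to $\Sigma$. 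Then one must still pass from a global line subbundle of $\E$ to $\chi_{\E}=0$ (for instance, the projectivized circle bundle of $\E$ has Euler number $2\chi_{\E}$ and a line subbundle is a section of it). None of this is carried out in your proposal, so as it stands you have a complete proof only for $\sgn K^{\ext}=+1$, and a reduction of the case $\sgn K^{\ext}=-1$ to a plausible but unproved vanishing statement --- one which the paper's own one-line proof also does not supply.
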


\begin{proof}
 Combining \ref{item:k:1} of Lemma~\ref{lem:unbdd}, \eqref{eq:1p},
 \eqref{eq:2p} and \ref{item:k:5} of Lemma~\ref{lem:bdd},
 we have the conclusion.
\end{proof}

\begin{example}\label{ex:CMC}
 Let $f\colon{}M^2\to \R^3$ be an immersion of constant mean curvature
 $1/2$ and $\nu:M^2\to S^2(\subset \R^3)$ its Gauss map.
 Then the parallel surface 
 $f_1=f +\nu$
 is a wave front
 with constant Gaussian curvature $1$.
 Since the unit normal vector field  of $f_1$ is also $\nu$,
 the singular set $\Sigma_\#$ of $\nu$ coincides with the set
 $\{K_f=0\}$, where  $K_f$ is the Gaussian curvature of $f$.
 Then by Lemma~\ref{lem:bdd}, $\Sigma=\Sigma_\#=\{K_f=0\}$.

 It is well-known that there are many constant mean curvature
 immersed tori in $\R^3$.
 On the other hand, 
 Gro\ss{}e-Brauckmann \cite{GB} constructed triply periodic 
 surfaces in $\R^3$ with constant mean curvature $1/2$.
 Such surfaces are compact surfaces in a $3$-dimensional
 flat torus, and then Theorem~\ref{thm:e} can be applied
 for $f +\nu$ of them.
\end{example}

Moreover, for fronts of negative extrinsic curvature,
we have the following:

\begin{theorem}\label{thm:g}
 Let $M^2$ be a compact oriented $2$-manifold, and 
 $f:M^2\to N^3(c)$ a front such that $K^{\ext}<0$ and
 $\log|K^{\ext}|$ is bounded on $M^2$.
 Then  $\chi(M^2)=0$ holds.
\end{theorem}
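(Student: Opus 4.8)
The plan is to add the two ``unsigned'' Gauss--Bonnet formulas \eqref{eq:1m} (for $\phi$) and \eqref{eq:2m} (for $\psi$) and to observe that, under the present hypotheses, the two curvature integrals over $M^2$ cancel against each other and the two singular curvature integrals over the singular set cancel against each other, so that what remains is $4\pi\chi(M^2)=0$. As in Theorem~\ref{thm:e}, I assume the singular set $\Sigma$ of $\phi$ (which equals $\Sigma_\#$, the singular set of $\psi$, by \ref{item:k:4} of Lemma~\ref{lem:bdd}) consists of $A_2$- and $A_3$-points for both $\phi$ and $\psi$, so that \eqref{eq:1m} and \eqref{eq:2m} apply. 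Since $\log|K^{\ext}|$ is bounded, Lemma~\ref{lem:bdd} applies in a neighborhood of every $A_2$-point and gives $\kappa\,d\tau=\epsilon\,\kappa_\#\,d\tau_\#$ along the $A_2$-part of the singular set, where $\epsilon=\sgn(K^{\ext})$; because $K^{\ext}<0$ on all of $M^2$ we have $\epsilon\equiv-1$, hence $\kappa\,d\tau=-\kappa_\#\,d\tau_\#$ on the open dense subset of $\Sigma$ consisting of $A_2$-points, and therefore $\int_{\Sigma}\kappa\,d\tau=-\int_{\Sigma_\#}\kappa_\#\,d\tau_\#$.

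Next I compare the two volume integrands on $M^2\setminus\Sigma$. From \ref{item:k:1} and \ref{item:k:2} of Lemma~\ref{lem:unbdd} one has $K\,d\hat A=K_\#\,d\hat A_\#=K_\#\,K^{\ext}\,d\hat A$, so $K=K_\#\,K^{\ext}$, i.e.\ $K_\#=K/K^{\ext}$ on $M^2\setminus\Sigma$; combining this with $dA_\#=|K^{\ext}|\,dA$ from \ref{item:k:2b} yields
\[
   K_\#\,dA_\# = \frac{K}{K^{\ext}}\,|K^{\ext}|\,dA = \sgn(K^{\ext})\,K\,dA = -K\,dA,
\]
using $K^{\ext}<0$. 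Moreover $K=c+K^{\ext}$ is bounded, and boundedness of $\log|K^{\ext}|$ forces $|K^{\ext}|$ to be bounded away from $0$, so $K_\#=K/K^{\ext}$ is bounded as well; hence every integral appearing in \eqref{eq:1m} and \eqref{eq:2m} is finite.

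Finally, adding \eqref{eq:1m} and \eqref{eq:2m} gives
\[
   \int_{M^2}K\,dA+\int_{M^2}K_\#\,dA_\#
     = 4\pi\chi(M^2)-2\int_{\Sigma}\kappa\,d\tau-2\int_{\Sigma_\#}\kappa_\#\,d\tau_\#.
\]
By the displayed identity for $K_\#\,dA_\#$ the left-hand side equals $\int_{M^2}K\,dA-\int_{M^2}K\,dA=0$, and the two singular curvature integrals on the right cancel by what was shown above; hence $4\pi\chi(M^2)=0$, which is the assertion. The only delicate points are the sign bookkeeping: that $\epsilon\equiv-1$ in \ref{item:k:5} of Lemma~\ref{lem:bdd} and that $K_\#\,dA_\#=-K\,dA$, both of which are forced precisely by $K^{\ext}<0$; everything else is a direct substitution into the four Gauss--Bonnet formulas.
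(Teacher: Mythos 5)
Your proposal is correct and follows essentially the same route as the paper: both add the unsigned Gauss--Bonnet formulas \eqref{eq:1m} and \eqref{eq:2m}, use Lemma~\ref{lem:bdd} (with $\epsilon=\sgn(K^{\ext})=-1$) to cancel the singular curvature integrals via $\kappa\,d\tau=-\kappa_\#\,d\tau_\#$, and then show $\int_{M^2}K_\#\,dA_\#=-\int_{M^2}K\,dA$ to conclude $4\pi\chi(M^2)=0$. The only cosmetic difference is that you obtain the last cancellation from the pointwise identity $K_\#\,dA_\#=-K\,dA$ via \ref{item:k:1}, \ref{item:k:2} and \ref{item:k:2b} of Lemma~\ref{lem:unbdd}, whereas the paper splits the integral over $M_\#^{\pm}$ and uses $M^{+}=M_\#^{-}$ from \ref{item:k:5} of Lemma~\ref{lem:bdd}; the ingredients and the structure of the argument are the same.
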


\begin{proof}
 By  Lemma~\ref{lem:bdd},
 we have $\Sigma=\Sigma_\#$ and 
 $\kappa\, d\tau=-\kappa_{\#}\,d\tau_\#$.
 Then by \eqref{eq:1m} and \eqref{eq:2m}, we have
 \begin{align*}
  \int_{M^2}\,K\,dA &=
      2\pi \chi(M^2) - 2 \int_{\Sigma}\kappa\,d\tau 
    = 2\pi\chi(M^2) + 2\int_{\Sigma}\kappa_{\#}\,d\tau_\# \\
    &= 2\pi\chi(M^2) + 
     \left(2\pi\chi(M^2)-
           \int_{M^2}K_{\#}\,dA_{\#}\right).
 \end{align*}
 On the other hand, by \ref{item:k:5} in Lemma~\ref{lem:bdd}
 and \ref{item:k:1} in Lemma~\ref{lem:unbdd}, we have
 \begin{align*}
  \int_{M^2}\!\!K_{\#}\,dA_\# &=
     \int_{M_\#^+}\!\!K_{\#}\,d\hat A_{\#} -
     \int_{M_\#^-}\!\!K_{\#}\,d\hat A_{\#} 
   =\int_{M^-}\!\!K_{\#}\,d\hat A_{\#} -
      \int_{M^+}\!\!K_{\#}\,d\hat A_{\#} \\
   &=  \int_{M^-}\!\!K\,d\hat A -
     \int_{M^+}\!\!K\,d\hat A = -\int_{M^2}\!\!K\,dA.
 \end{align*}
 Combining these, we have the conclusion.
\end{proof}

\begin{example}\label{ex:cyc}
 It is well-known that there is a front of constant Gaussian
 curvature $-1$ which is diffeomorphic to a torus, that is,
 Euler number vanishes.
 Moreover, we can construct an example of a compact front
 of non-constant negative Gaussian curvature as follows: 
 We set
 \[
    f(u,v):=\bigl((2+\cos u)\cos v ,(2+\cos u)\sin v ,u-\sin u\bigr),
 \]
 which is another rotation of a cycloid 
 (cf.\ Example \ref{ex:p}).
 One can easily check that
 $K$ is bounded.
 Since $f$ is periodic, it can be considered as a 
torus  in a flat space form $\R^2\times S^1$. 
\end{example}

\begin{example}
 Let $f\colon{}M^2\to S^3$ be a flat front in the unit $3$-sphere, 
 that is, $f$ has vanishing Gaussian curvature on its regular points.
 Then the unit normal vector field $\nu:M^2\to S^3$
 also gives a flat fronts.
 If $M^2$ is compact, $f$ is weakly complete in the sense of 
 \cite{KU} and $M^2$ is orientable (see \cite[Section 1]{KU}).
 On the other hand
 by Theorem~\ref{thm:g}, the Euler characteristic of $M^2$
 vanishes because $K^{\ext}=-1$ for flat surfaces in $S^3$.
 Thus $M^2$ is diffeomorphic to a torus.
 Moreover  the singular set of $f$
 coincides with that of $\nu$, 
 and  Theorem~\ref{thm:e} yields that
 $S^+-S^- = -(S^+_\#-S^{-}_\#)$
 holds. This identity is not trivial
 since
 the set of swallow tails of $f$
 is not equal to that of  $\nu$.
 (In fact, $p$ is an $A_3$-point, the null direction 
 should be tangential direction of the singular set $\Sigma$.
 On the other hand, null directions of $\phi$ and $\psi$ 
 are linearly independent because $f$ is a front.)
 Several other properties of flat tori in $S^3$ as fronts are
 discussed in \cite{KU}. 
\end{example}
\section*{Acknowledgement}
 The authors thank Wayne Rossman
 for careful reading of the first draft for giving valuable comments.
 They also thank the referee for valuable comments.

\end{document}